\begin{document}

\title[$\Lambda$-modules and holomorphic Lie algebroid connections]{$\Lambda$-modules and holomorphic Lie algebroid connections}

\author{Pietro Tortella}

\address{Mathematical Physics Sector, SISSA, Via Bonomea 265, 34136, Trieste (ITALY)\newline
Laboratoire Paul Painlev\'e, Universit\'e Lille 1, Cit\'e Scientifique, 59655, Villeneuve D'Ascq Cedex (FRANCE)}

\email{pietro.tortella@sissa.it}

\begin{abstract}
Let $X$ be a complex smooth projective variety, and $\belG$ a locally free sheaf on $X$. We show that there is a 1-to-1 correspondence between pairs $(\Lambda,\Xi)$, where $\Lambda$ is a sheaf of almost polynomial filtered algebras over $X$ satisfying Simpson's axioms and $\Xi: \Gr\Lambda \rightarrow \Sym^\bullet_{\corO_X} \belG$ is an isomorphism, and pairs $(\bella,\Sigma)$, where $\bella$ is a holomorphic Lie algebroid structure on $\belG$ and $\Sigma$ is a class in $F^1H^2(\bella,\C)$, the first Hodge filtration piece of the second cohomology of $\bella$.

As an application, we construct moduli spaces of semistable flat $\bella$-connections for any holomorphic Lie algebroid $\bella$. Particular examples of these are given by generalized holomorphic bundles for any generalized complex structure associated to a holomorphic Poisson manifold.
\end{abstract}

\maketitle

\section{Introduction}
In \cite{simpson_representation_1}, C. Simpson constructed, by using GIT techniques, moduli spaces for semistable $\Lambda$-modules over a smooth projective algebraic variety $X$, where $\Lambda$ is a sheaf of filtered $\C_X$-algebras satisfying some axioms. With this result, by varying the algebra $\Lambda$, one gets moduli spaces of a large class of objects, which includes semistable coherent sheaves, flat connections and Higgs bundles.

The main results of this paper are classification of the sheaves of algebras $\Lambda$ satisfying Simpson's axioms, and some applications.

In \cite{sridharan}, Sridharan classifies filtered $k$-algebras $A$ such that $A_{(0)} = k$ and the associated graded algebra is the symmetric algebra over the first graded piece $\Gr_1 A$. He shows the following:
\begin{theorem}
Let $k$ be a commutative unital ring, $L$ a free $k$-module and $\belS = \Sym_k^\bullet L$ the full symmetric algebra of $L$.

Then there is a 1-to-1 correspondence between:
\begin{itemize}
\item isomorphism classes of pairs $(A,\Xi)$, where $A$ is a filtered $k$-algebra with $A_{(0)} = k$, $\Gr_1 A \iso L$ and $\Xi : \Gr A \rightarrow \belS$ an isomorphism of graded algebras;
\item pairs $(\gotg,\Sigma)$, where $\gotg$ is a $k$-Lie algebra structure on $L$ and $\sigma \in H^2_{CE}(\gotg,k)$, the second Chevalley-Eilenberg cohomology group.
\end{itemize}
\end{theorem}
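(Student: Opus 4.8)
The plan is to construct the bijection explicitly in both directions and then check that the two constructions are mutually inverse.

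\emph{From filtered algebras to Lie data.} Given a pair $(A,\Xi)$ as in the statement, observe first that, since $\Gr A \iso \belS$ is commutative and $A$ is generated by $A_{(1)}$, for $a,b \in A_{(1)}$ the commutator $ab-ba$ lies in $A_{(1)}$, not merely in $A_{(2)}$. Because $A_{(0)} = k$ and $L$ is free, the sequence $0 \to k \to A_{(1)} \to \Gr_1 A \iso L \to 0$ splits; choose a $k$-linear splitting $s\colon L \to A_{(1)}$, so that $A_{(1)} = k\cdot 1 \oplus s(L)$, and define alternating maps $[\,,\,]\colon L\times L \to L$ and $c\colon L\times L \to k$ by $s(x)s(y) - s(y)s(x) = s([x,y]) + c(x,y)\cdot 1$. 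Expanding the associativity relation $(s(x)s(y))s(z) = s(x)(s(y)s(z))$ and collecting the $s(L)$-component gives the Jacobi identity for $[\,,\,]$, while the $k\cdot 1$-component gives the Chevalley--Eilenberg cocycle condition for $c$; here one uses commutativity of $\Gr A$ to control the lower-order terms. Replacing $s$ by $s' = s + \phi\cdot 1$ with $\phi\colon L \to k$ leaves $[\,,\,]$ unchanged and changes $c$ by $d_{CE}\phi$, so the class $\sigma = [c] \in H^2_{CE}(\gotg,k)$ is well defined; and an isomorphism of pairs $(A,\Xi)\iso(A',\Xi')$ carries a splitting to a splitting and, being an algebra map, preserves the resulting bracket and cocycle, so we obtain a well-defined map on isomorphism classes.

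\emph{From Lie data to filtered algebras.} Given $(\gotg,\sigma)$, pick a representative cocycle $c$ and set $A(\gotg,c) := T_k(L)\big/\bigl(x\otimes y - y\otimes x - [x,y]_{\gotg} - c(x,y)\bigr)$, filtered by the images of $\bigoplus_{i\le n} L^{\otimes i}$. Equivalently, form the central extension $\widetilde{\gotg} = \gotg \oplus k z$ with bracket $[x,y]_{\widetilde{\gotg}} = [x,y]_{\gotg} + c(x,y)z$ ($z$ central), which is free as a $k$-module since $L$ is, and check that $A(\gotg,c) \iso U(\widetilde{\gotg})/(z-1)$ with its induced filtration. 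The key point, and the main technical obstacle, is to identify $\Gr A(\gotg,c)$ with $\belS$: this is precisely a Poincar\'e--Birkhoff--Witt statement. Over a commutative ring the PBW theorem holds for enveloping algebras of Lie algebras that are free as modules, so $\Gr U(\widetilde{\gotg}) \iso \Sym_k^\bullet \widetilde{\gotg}$; quotienting by $z-1$, whose symbol is a free generator, then yields a canonical graded isomorphism $\Xi_c\colon \Gr A(\gotg,c) \iso \Sym_k^\bullet L = \belS$. (Alternatively, one argues directly with Bergman's diamond lemma after fixing an ordered basis of $L$: the overlap ambiguities on triples $e_k e_j e_i$ decompose into the Jacobi identity and the cocycle condition, and their resolvability is exactly what makes the ordered monomials a $k$-basis.) Replacing $c$ by $c + d_{CE}\phi$ gives an isomorphic filtered pair through the automorphism of $T_k(L)$ sending $x \mapsto x + \phi(x)$, so $(A(\gotg,c),\Xi_c)$ depends, up to isomorphism, only on $\sigma$.

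\emph{Mutual inverseness.} Starting from $(\gotg,\sigma)$: the splitting $L \hookrightarrow A(\gotg,c)_{(1)}$ induced by $L\subseteq\widetilde{\gotg}$ recovers $[\,,\,]_{\gotg}$ and the cocycle $c$ by construction, hence recovers $\sigma$. Conversely, starting from $(A,\Xi)$ with a splitting $s$: the universal property of $T_k(L)$ produces a filtered $k$-algebra homomorphism $A(\gotg,c) \to A$, and it is an isomorphism because it is compatible with the identifications $\Xi_c$ and $\Xi$ of the two associated gradeds with $\belS$, and a filtered morphism inducing an isomorphism on $\Gr$ (with exhaustive, separated filtrations) is an isomorphism. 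This establishes the claimed bijection. I expect the PBW identification of $\Gr A(\gotg,c)$ with $\belS$ to be the only step that is not a routine unwinding of definitions together with bookkeeping of the chosen splitting.
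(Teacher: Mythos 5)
Your argument is correct and takes essentially the same route as the paper's: the theorem is quoted from Sridharan \cite{sridharan}, and both his proof and the paper's Lie-algebroid generalization in Section 4 proceed exactly as you do --- a splitting of $0\to k\to A_{(1)}\to L\to 0$ produces the bracket and a Chevalley--Eilenberg $2$-cocycle whose class is independent of the splitting, and the inverse construction is the twisted enveloping algebra $T_k(L)$ modulo the relations $x\otimes y-y\otimes x-[x,y]-c(x,y)$. The only variation is your justification of the PBW identification $\Gr A(\gotg,c)\iso \Sym^\bullet_k L$ via the central extension and $U(\widetilde{\gotg})/(z-1)$ (or the diamond lemma), where Sridharan adapts the classical PBW argument directly and the paper merely asserts the graded isomorphism from the shape of the relations; your route is a legitimate substitute for that step.
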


We will generalize Sridharan's construction to the case of sheaves of filtered algebras over a smooth projective variety satisfying Simpson's axioms: the first step is to drop the hypothesis $A_{(0)}=k$, and the second one is a generalization to sheaves. What we will obtain is the following:
\begin{theorem} \label{thm:main_intro}
Let $X$ be a compact K\"ahler manifold, $\belG$ a locally free coherent $\corO_X$-module and $\belS = \Sym_{\corO_X}^\bullet \belG$.

Then there is a 1-to-1 correspondence between
\begin{itemize}
\item isomorphism classes of pairs $(\Lambda,\Xi)$ where $\Lambda$ is a sheaf of filtered algebras satisfying Simpson's axioms and $\Xi : \Gr\Lambda \rightarrow \belS$ is an isomorphism of graded algebras;
\item pairs $(\bella,\Sigma)$ where $\bella$ is a holomorphic Lie algebroid structure on $\belG$ and $\Sigma \in F^1H^2(\bella,\C)$.
\end{itemize}
\end{theorem}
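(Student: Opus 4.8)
The plan is to build the correspondence by passing to a local analysis and then globalizing via sheaf cohomology, mimicking Sridharan's strategy but keeping track of the filtration degree one piece (the "anchor" data) and the Simpson axioms throughout. First I would recall precisely what Simpson's axioms say for $\Lambda$: it is a sheaf of filtered $\C_X$-algebras with $\Lambda_0 = \corO_X$, the left and right $\corO_X$-module structures on $\mathrm{Gr}_1\Lambda$ agree, $\mathrm{Gr}\Lambda$ is generated in degrees $0$ and $1$, and $\mathrm{Gr}\Lambda$ is a sheaf of commutative $\corO_X$-algebras (plus the coherence/locally-free conditions). Given such a $\Lambda$ together with the isomorphism $\Xi\colon \mathrm{Gr}\Lambda \xrightarrow{\sim} \Sym^\bullet_{\corO_X}\belG$, I would first extract the Lie algebroid structure: the commutator on $\Lambda_1$ descends to a $\C_X$-bilinear bracket on $\mathrm{Gr}_1\Lambda \cong \belG$, and the failure of $\corO_X$-bilinearity is exactly an anchor map $\belG \to T_X$; the Simpson axioms guarantee the Jacobi and Leibniz identities hold, so $\belG$ becomes a holomorphic Lie algebroid $\bella$. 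This is the straightforward half and I would dispatch it quickly.

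The substantive direction is the reconstruction: given $(\bella,\Sigma)$, produce $(\Lambda,\Xi)$, and show the assignments are mutually inverse. Here I would argue that, locally on $X$ over an open set trivializing $\belG$, Sridharan's theorem (or rather a version of it over $\corO_X$ rather than over a field, dropping $A_0 = k$) gives a filtered algebra structure once a Chevalley--Eilenberg $2$-cocycle is chosen; the point is that the relevant cohomology theory for a Lie algebroid $\bella$ is its Lie algebroid cohomology $H^\bullet(\bella,\C)$, computed by the complex $(\Omega^\bullet_\bella, d_\bella)$ with $\Omega^1_\bella = \bella^\vee$. The Hodge filtration $F^1H^2(\bella,\C)$ is precisely the part of $H^2$ that can be represented by a $2$-form of type "$\Omega^2_\bella$ with no purely antiholomorphic component", i.e.\ by a holomorphic section — this is what makes a class in $F^1H^2$ yield a \emph{holomorphic} central extension of $\bella$, which is the object that governs the algebra structure on $\Lambda_1$ and hence, by the universal/PBW property, on all of $\Lambda$. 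Concretely I would let $\Lambda$ be the quotient of the tensor algebra of $\Lambda_1 = \corO_X \oplus \bella$ by the relations $v\otimes w - w\otimes v - [v,w]_\bella - \Sigma(v,w)$ for $v,w$ local sections of $\bella$ (together with the relations making $\corO_X$ act the way the anchor dictates), and I would verify that the PBW-type theorem holds so that $\mathrm{Gr}\Lambda \cong \Sym^\bullet_{\corO_X}\belG$ canonically, defining $\Xi$.

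The main obstacle I expect is twofold and both parts are about \emph{globalization and well-definedness}. First, the local algebras built above must be shown to glue to a sheaf $\Lambda$ on $X$, and different choices of the $2$-cocycle representing $\Sigma$ (and different local trivializations) must give isomorphic filtered algebras: this is where the passage from cocycles to the cohomology class $F^1H^2(\bella,\C)$ is genuinely used, and where one needs that coboundaries in the Lie algebroid de Rham complex correspond exactly to filtered algebra isomorphisms inducing the identity on $\mathrm{Gr}$. I would handle this by a \v{C}ech/hypercohomology argument: a class in $F^1 H^2(\bella,\C)$ is represented by a \v{C}ech--de Rham cocycle $(\Sigma_i, \eta_{ij})$ with $\Sigma_i \in \Omega^2_\bella(U_i)$ holomorphic, $\eta_{ij}\in\Omega^1_\bella(U_{ij})$, and these data are precisely what is needed to define $\Lambda|_{U_i}$ and the gluing isomorphisms $\Lambda|_{U_{ij}}$ satisfying the cocycle condition on triple overlaps. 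Second, I must check that the Simpson axioms for the resulting $\Lambda$ hold — commutativity of $\mathrm{Gr}\Lambda$, generation in degree $\leq 1$, coherence, the matching $\corO_X$-bimodule structure on $\mathrm{Gr}_1$ — which follows once $\Xi$ is established but needs the holomorphicity of $\Sigma$ (an antiholomorphic component would obstruct the algebra from being a sheaf of $\corO_X^{\mathrm{hol}}$-algebras). Finally, mutual inverseness: starting from $\Lambda$, extracting $(\bella,\Sigma)$, and rebuilding gives back $\Lambda$ up to the canonical isomorphism, because the extension class of $\Lambda_1$ as a central extension of $\bella$ by $\corO_X$ recovers $\Sigma$, and the PBW isomorphism is canonical; conversely the round trip on $(\bella,\Sigma)$ is immediate from the construction. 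I would also remark that compactness/K\"ahler enters only to ensure the Hodge filtration $F^1 H^2$ is well-behaved (e.g.\ via the degeneration of the relevant spectral sequence), so that the classification is phrased in terms of honest cohomology rather than hypercohomology of a truncated complex.
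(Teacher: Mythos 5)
Your proposal follows essentially the same route as the paper: extract the Lie algebroid from the first-order part of $\Lambda$, classify the abelian extension $0\rightarrow \corO_X\rightarrow \Lambda_{(1)}\rightarrow \bella\rightarrow 0$ by \v{C}ech--de Rham representatives $(Q_\alpha,\phi_{\alpha\beta})$ identified with $F^1H^2(\bella,\C)$ via the K\"ahler-induced partial degeneration, and rebuild $\Lambda$ as locally defined Sridharan-type twisted enveloping algebras glued by the $\phi_{\alpha\beta}$, with coboundaries giving the filtered isomorphisms. The only caveat is terminological: the extension is abelian but not central (the anchor acts on $\corO_X$), which your explicit relations in fact reflect correctly.
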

The vector space $F^1H^2(\bella,\C)$ is the first piece of the Hodge filtration of the $2$nd Lie algebroid cohomology group $H^2(\bella,\C)$, and is computed using the cohomological theory for holomorphic Lie algebroids developed in \cite{xu}.

We then present some applications of this correspondence in the study of moduli spaces of $\Lambda$-modules. One can observe that the data of a triple $(H, \delta , \gamma)$ as in the second section of \cite{simpson_representation_1} is equivalent to the data of a holomorphic Lie algebroid, and that our theorem completes Theorem 2.11 of loc. cit. As a consequence, we obtain moduli spaces of flat $\bella$-connections for any holomorphic Lie algebroid $\bella$, and with the functoriality property we construct rational maps between these spaces.

Moreover, we expect that through this correspondence one can apply the theory of Lie algebroids in the study of moduli spaces of $\Lambda$-modules. In a paper to come, we will apply the deformation theory of Lie algebroids to construct a generalization of Deligne-Simpson $\lambda$-connections \cite{simpson_lambda} and to compactify the moduli spaces of flat connections by allowing the latter to degenerate along foliations.

\spazio

The paper is organized as follows: Section 2 is standard, and it aims to give a quick introduction to the objects we are using and fix the notation. We first define (smooth) Lie algebroids and their cohomology. Then we introduce the $\la$-characteristic ring of a vector bundle. Finally we give a definition of a matched (or twilled) pair of Lie algebroids. An exhaustive discussion on these subjects can be found in \cite{mckenzie}, \cite{fernandes}, \cite{huebschmann_twilled}.

In Section 3 we discuss holomorphic Lie algebroids. It contains mainly known facts, but we add some small improvements that we have not found in the literature. After standard definitions, we recall from \cite{xu} the construction of the canonical complex Lie algebroid $\bella_h$ associated to a holomorphic Lie algebroid $\bella$ and a generalization of the holomorphic deRham Theorem, while from \cite{ugo_volodya} we take a generalization of Dolbeault's theory. We then prove a kind of partial degeneration of the spectral sequence computing the cohomology of a holomorphic Lie algebroid, that will allow us to describe the vector spaces $F^1H^2(\bella,\C)$. 
Further, we adress the question of existence of holomorphic $\bella$-connections on coherent $\corO_X$-modules: as in \cite{ref_1} and \cite{ref_2}, we introduce the jet bundle of a Lie algebroid and as in \cite{calaque} the $\bella$-Atiyah class of a coherent $\corO_X$-module $\belE$; this generates the $\bella_h$-characteristic ring of $\belE$, and we show that, also when $\belE$ is torsion free, the trace of the curvature of a holomorphic $\bella$-connection is a representative of the first $\bella_h$-Chern class of $\belE$.

Section 4 is devoted to the proof of Theorem \ref{thm:main_intro}: we first classify the extensions of a holomorphic Lie algebroid $\bella$ by $\corO_X$ on a compact K\"ahler manifold, and then generalize Sridharan's construction to our context. Remark that in \cite{BB} the same kind of classification is provided in the case when $\bella$ is the canonical Lie algebroid $\belT_X$.

In the last section, we show an application of this classification to moduli spaces of $\Lambda$-modules: it is well known that if $\bella$ is a Lie algebroid, then a $\bella$-representation is equivalent to a $\belU (\bella)$-module, where $\belU(\bella)$ is the universal enveloping algebra of $\bella$. We will see that module structures for algebras $\Lambda$ twisted by a cohomology class $\Sigma$ can be described as bunches of local $\bella$-connections on $\belE$ satisfying some compatibility equations prescribed by $\Sigma$. These have a better interpretation as representations of torsors, as we will describe in a forthcoming paper. 
In particular, we will see that by using Simpson's techniques one obtains a quasi-projective moduli scheme for flat $\bella$-connections for any holomorphic Lie algebroid $\bella$,
while the results of Section 3 allow us to conclude that the moduli spaces corresponding to a non-zero $\Sigma \in F^2H^2(\bella;\C)$ are empty.
Examples of particular interest are then examined, and we will see, following \cite{xu}, that when $\bella$ is the holomorphic Lie algebroid $(\Omega_X)_{\Pi}$ associated to a holomorphic Poisson manifold $(X,\Pi)$, $\Lambda$-modules coincide with generalized holomorphic bundles, so that Simpson's construction provides moduli spaces for these objects.

\subsection*{Acknowledgement}
This work covers a part of my PhD thesis. I am very indebted to my advisors Ugo Bruzzo and Dimitri Markushevich for proposing the problem and their constant guidance through my work. Moreover I would like to thank Vladimir Rubtsov, Carlos Simpson, Rui Loja Fernandes  and David Mart\'inez Torres for very useful conversations. 

I acknowledge a partial support by PRIN 2008: "Geometria delle variet\`a algebriche e dei loro spazi di moduli", the grant ANR-09-BLAN-0104-01, and GRIFGA.

\section{Lie algebroids}
\subsection{Real Lie algebroids} \label{sec:real_lie}

Let $X$ be a smooth manifold and $T_X$ its tangent bundle. 

\begin{definition}
A (real) \emph{Lie algebroid} on $M$ is a triple $(\la,a,\{\cdot,\cdot\})$ such that:
\begin{itemize}
\item $\la$ is a vector bundle on $M$;
\item $\{\cdot,\cdot\}$ is a $\R$-Lie algebra structure on $\Gamma(\la)$, the space of smooth global sections 
of $\la$;
\item $a: \la\rightarrow T_M$ is a vector bundle morphism (the anchor) that induces a Lie algebra morphism 
on global sections and satisfies the following Leibniz rule: 
$$\{u,fv\} = f\{u,v\} + a(u)(f) v$$
 for any $u,v \in \Gamma(\la)$ and $f \in C^\infty(M)$.
\end{itemize}
\end{definition}

A morphism between two Lie algebroids $\la$ and $\la'$ over the same base manifold $M$ is a vector bundle 
morphism $\la\rightarrow \la'$ that commutes with the anchors and such that the induced morphism on global sections 
is a Lie algebra morphism.

For $p>1$ we denote by $\belA^p_L$ the sheaf of smooth sections of $\bigwedge^p L^*$ and by $A^p_L$ the 
$C^\infty(X)$-module of its global sections. Set $\belA^0_L = \belA^0_X$, the sheaf of smooth
functions on $X$. Elements of $\belA^k_L$ ($A^k_L$) are called (global) $k$-$L$-forms.

One can define a $L$-differential $\dc_L : \belA^k_L \rightarrow \belA^{k+1}_L$ via the formula
\begin{equation} \label{eqn:differential_la}
\begin{array}{rcl}
(\dc_\la \theta) (u_1,\ldots,u_{p+1}) &=& \sum_i (-1)^{i+1} a(u_i) (\theta (u_1,\ldots,\hat{u}_i,\ldots,u_{p+1}))\ +\\
 &+& \sum_{i<j} (-1)^{i+j} \theta(\{u_i,u_j\}, u_1,\ldots, \hat{u}_i,\hat{u}_j, \ldots u_{p+1})
\end{array}\end{equation}
for $\theta \in \belA^p_\la$ and $u_1,\ldots,u_{p+1}$ (local) sections of $\la$.

One has $\dc_\la^2 = 0$, and we call the cohomology of the complex $(\bigwedge^\bullet \la\dual, \dc_\la)$ the 
\emph{Lie algebroid cohomology} of $\la$, and denote it by $H^\bullet(\la,a,\{\cdot,\cdot\},\R)$ 
(or simply $H^\bullet(\la,\R)$).

Remark that $a\dual: T^*_X \rightarrow \la\dual$, the dual of the anchor $a$, induces a morphism of complexes 
$\Gamma(\bigwedge^p T^*_X) \rightarrow \Gamma(\bigwedge^p \la\dual)$ yielding a morphism in cohomology 
$a^* : H_{DR}^p(X,\R) \rightarrow H^p(\la,\R)$.

\subsection{Basic examples} \label{sec:example_1}
The tangent bundle $T_X$ has a \emph{canonical} Lie algebroid structure with the identity as the anchor and the 
commutator of vector fields as the bracket. 

\spazio
Any regular integrable foliation $\corF\subseteq T_X$ carries a natural Lie algebroid structure, for wich the 
anchor is the inclusion and the bracket is the restriction to $\Gamma(\corF)$ of the commutator of vector fields.

\spazio
Let $(K,0, \{\cdot,\cdot\})$ be a Lie algebroid with anchor equal to zero. Then the bracket is actually 
$C^\infty(X)$-linear, so $\Gamma(K)$ inherits a $C^\infty(X)$-Lie algebra structure. Moreover, the bracket 
induces on each fiber of $K$ a $\R$-Lie algebra structure that varies smoothly. So $K$ is a so called 
\emph{bundle of Lie algebras}.

Vice versa, if $K$ is a bundle of Lie algebras, the brackets on the fibers glue to define a bracket on global 
sections, so we can define a Lie algebroid structure on $K$ using this bracket and the $0$ map as anchor.

Remark that we can see any vector bundle $K$ as a Lie algebra bundle with trivial Lie algebra structure, 
i.e. any vector bundle admits a trivial Lie algebroid structure, having both anchor and bracket equal to $0$.

\spazio
Let $(\la,a,\{\cdot,\cdot\})$ be a Lie algebroid: the image of the anchor $\corF = \image(a)$ is an integrable 
foliation of $X$ (not necessarily regular), that we call \emph{foliation associated to the Lie algebroid}.

The kernel of $a$ gives a subsheaf of the sections of $L$, and is naturally equipped with a structure
of sheaves of $\belA^0_X$-Lie algebra.

\spazio
Let $X$ be a smooth manifold and $\Pi\in \Gamma(\bigwedge^2T_X)$ a Poisson bivector. 
Then we can define a Lie algebroid structure on the cotangent bundle $T\dual_X$ with
\begin{itemize} 
\item anchor $\sharp : T\dual_X \rightarrow T_X$ given by the contraction with $\Pi$,
\item bracket defined by the formula
$$
\{\alpha,\beta\} = \dc \langle\Pi,\alpha\wedge\beta \rangle - \corL_{\sharp (\beta)} \alpha + 
\corL_{\sharp(\alpha)}\beta
$$
for any $\alpha,\beta\in \Gamma(T\dual_X)$, where $\corL_V$ is the Lie derivative along the vector field $V$.
\end{itemize}

\subsection{Lie algebroid connections and characteristic classes} 

As before, let $X$ be a manifold, $(\la,a,\{\cdot,\cdot\})$ be a Lie algebroid and $E$ a vector bundle over it. 

\begin{definition}
By a \emph{$\la$-connection} on $E$ we mean a map $\nabla: \Gamma(E) \rightarrow \Gamma(E)\otimes A_\la^1$ satisfying 
the Leibniz rule $\nabla(fe) = f\nabla e + e \otimes \dc_\la f$. 
\end{definition}

As with usual connections, one can extend an $\la$-connection to higher degree forms: 
let $\belA_\la^k(E)$ be the sheaf of sections of $\bigwedge^k \la\dual \otimes E$, 
and $A^k_\la(E)$ its global sections. Then there is a unique way to extend $\nabla$ to an operator
$\nabla: A_\la^k(E) \rightarrow A_\la^{k+1}(E)$, such that 
$\nabla(\eta \otimes s) = \dc_\la \eta \otimes s + (-1)^k \eta \wedge \nabla s$ 
for $\eta \in A_\la^k$ and $s\in \Gamma(E)$. 

Define the \emph{curvature} of $\nabla$ as 
$$
F_\nabla = \nabla\circ \nabla: A_\la^0(E) \rightarrow A_\la^2(E).
$$ 
It turns out that $F_\nabla$ is $C^\infty(X)$-linear, thus yielding a global section in $A^2_\la(\End E)$; 
moreover the following formula holds:
$$
F_\nabla (u,v)(e) = [\nabla_u,\nabla_v](e) -\nabla_{\{u,v\}}e\ , \qquad u,v\in \Gamma(\la),\ e\in \Gamma(E)\ ,
$$
where $\nabla_w:E\rightarrow E$ denotes the $1$st order differential operator 
$e\rightarrow \langle\nabla e, w\rangle$.

Remark that any ($T_X$-)connection on $E$ induces a $\la$-connection: 
if $\nabla: E \rightarrow E\otimes T^*_X$ is a connection, composing with 
$\id_E \otimes a^* : E\otimes T^*_X \rightarrow E\otimes \la\dual$ we obtain a $\la$-connection. 
So on any vector bundle $E$ there always exist $\la$-connections.

We say that a $\la$-connection on the vector bundle $E$ is \emph{flat} when its curvature vanishes; 
when this happens, the map $\Gamma(\la) \rightarrow \Der (E)$ given by $w \rightarrow \nabla_w$ is a morphism of 
$\R$-Lie algebras. We will also use the terms \emph{representation of $\la$} or \emph{$\la$-module} to mean a 
flat $\la$-connection on a vector bundle $E$. 

If $(E,\nabla)$ is a $\la$-module, then $(A_\la^\bullet(E),\nabla)$ forms a complex, and we can define the 
cohomology groups of $\la$ with values in $E$, and denote them by $H^k(\la;E,\nabla)$.

\spazio
Let $E$ be a rank $r$ complex vector bundle on $M$. Recall that the \emph{characteristic ring} of $E$,
that we denote by $\belR(E)$, is the image of the Chern-Weil homomorphism
$$
\lambda_E : I^\bullet(\GL(r,\C)) \rightarrow H^\bullet_{DR}(X; \C),
$$
where $I^\bullet(\GL(r,\C))$ is the algebra of $\Ad(\GL(r,\C))$-invariant multilinear maps
$P:\gl_r \times \ldots \times \gl_r \rightarrow \C$, and $\lambda_E$ is defined using the curvature of any 
connection on $E$ (see for example \cite{G-H}, Chapter 3.3).

If $(\la,a,\{\cdot,\cdot\})$ is a Lie algebroid, we have:
\begin{definition}
The \emph{$\la$-characteristic ring} of a complex vector bundle $E$ is the pull-back through the anchor of 
the characteristic ring of $E$:
$$
\belR_\la^\bullet(E) = a^*(\belR^\bullet(E)) \subseteq H^\bullet(\la,\C):= H^\bullet(\la;\R)\otimes \C\ .
$$
\end{definition}

It turns out that one can compute the $\la$-characteristic ring of $E$ via the curvature of any 
$\la$-connection on it, similarly to what happens in the usual case: if $F\in A^2_\la(\End E)$ is the 
curvature of a $\la$-connection and $P\in I^k(\GL_r)$, then one can show that the formula 
$$
\sum_{\sigma \in S_{2k}} (-1)^\sigma P(F(u_{\sigma(1)},u_{\sigma(2)}),\ldots, F(u_{\sigma(2k-1)},u_{\sigma(2k)}))
$$ 
for $u_1,\ldots,u_{2k} \in \Gamma(\la)$ is well defined and gives a closed $2$-$\la$-form, and that its 
cohomology class does not depend on the $\la$-connection chosen. This construction yields a homomorphism 
$\lambda^\la_E: I^\bullet(\GL_r) \rightarrow H^\bullet(\la)$ that makes the diagram
$$
\xymatrix{ I^\bullet(\GL(r,\C)) \ar[r] \ar[dr] & H^\bullet_{DR}(X,\C) \ar[d] \\
& H^\bullet(\la,\C)
}
$$
commutative.

\subsection{Matched (twilled) pairs of Lie algebroids}

Let $(\la_i,a_i, \{\cdot,\cdot\}_i)$ for $i=1,2$ be two Lie algebroids over the same manifold $X$.
We say that they form a \emph{matched pair} (or \emph{twilled pair}) if 
we are given a $\la_1$-module structure on $\la_2$ and a $\la_2$-module structure on 
$\la_1$ (that we denote both by $\nabla$) satisfying the following equations:
\begin{equation} \label{eqn:matched_pair}
\begin{array}{c}
[a_1(u_1),a_2(u_2)] = -a_1(\nabla_{u_2} u_1) + a_2(\nabla_{u_1}u_2)\ ,\\
\nabla_{u_1}(\{u_2,v_2\}_2) = \{\nabla_{u_1}u_2 , v_2\}_2 + \{u_2 , \nabla_{u_1} v_2\}_2 + 
\nabla_{\nabla_{v_2}u_1}u_2 - \nabla_{\nabla_{u_2}u_1}v_2 \ ,\\
\nabla_{u_2}(\{u_1,v_1\}_1) = \{\nabla_{u_2}u_1 , v_1\}_1 + \{u_1 , \nabla_{u_2} v_1\}_1 + 
\nabla_{\nabla_{v_1}u_2}u_1 - \nabla_{\nabla_{u_1}u_2}v_1 \ .
\end{array}\end{equation}

This definition is motivated by the following (see \cite{huebschmann_twilled}):
\begin{proposition}
\begin{enumerate}
\item
Let $\la$ be a Lie algebroid, and $\la_1,\la_2$ two sub-Lie algebroids of $\la$ such that
$\la = \la_1\oplus \la_2$. Then $(\la_1,\la_2)$ is naturally a matched pair of Lie algebroids.

\item
Let $(\la_1,\la_2)$ be a matched pair of Lie algebroids. Define on $\la = \la_1 \oplus \la_2$
the following structures: an anchor
$$
a: \la \rightarrow T_X \ , \qquad a(u_1+u_2)= a_1(u_1) + a_2(u_2)
$$ 
and a bracket
$$\begin{array}{rl}
\{u_1+u_2, v_1+v_2\} = &(\{u_1,v_1\}_1 + \nabla_{v_1}(u_2) - \nabla_{v_2}(u_1) ) + \\
  & +  (\{u_2,v_2\}_2+\nabla_{u_1}(v_2) - \nabla_{u_2}(v_1))
\end{array}$$
for any $u_i,v_i \in \Gamma(\la_i)$, $i=1,2$.

Then $(\la,a,\{\cdot,\cdot\})$ is a Lie algebroid, that we denote by $\la_1 \bowtie \la_2$,
and $\la_1,\la_2$ are naturally sub-Lie algebroids of $\la$.
\end{enumerate}
\end{proposition}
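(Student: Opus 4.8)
The plan is to treat the two statements separately, noting that (1) amounts to \emph{reading off} matched pair data from the splitting $\la=\la_1\oplus\la_2$, whereas (2) is a routine but somewhat lengthy compatibility check whose only nontrivial ingredient is the Jacobi identity for the bracket of $\la_1\bowtie\la_2$.

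For (1), let $p_i\colon\la\to\la_i$ be the two projections determined by $\la=\la_1\oplus\la_2$. For $u_1\in\Gamma(\la_1)$ and $u_2\in\Gamma(\la_2)$ I would set $\nabla_{u_1}u_2:=p_2(\{u_1,u_2\})$ and $\nabla_{u_2}u_1:=p_1(\{u_2,u_1\})$, extended $\R$-bilinearly. That these are $\la_i$-connections — the relevant $C^\infty(X)$-linearity in the first slot and the Leibniz rule in the second — is immediate from the Leibniz rule of $\{\cdot,\cdot\}$ together with $p_1|_{\Gamma(\la_2)}=0=p_2|_{\Gamma(\la_1)}$. Flatness of each connection, i.e.\ that it really defines a $\la_i$-module structure, follows by applying the Jacobi identity of $\la$ to a triple with two arguments in $\Gamma(\la_1)$ (resp.\ $\Gamma(\la_2)$) and one in $\Gamma(\la_2)$ (resp.\ $\Gamma(\la_1)$) and then projecting by $p_2$ (resp.\ $p_1$). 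Finally, the three identities \eqref{eqn:matched_pair} are all consequences of relations already holding in $\la$: the first comes from applying the anchor to $\{u_1,u_2\}$ and using $a=a_1\circ p_1+a_2\circ p_2$ together with the fact that $a$ is a Lie algebra morphism; the second and third come from the Jacobi identity of $\la$ applied to the triples $(u_1,u_2,v_2)$ and $(u_2,u_1,v_1)$, projected onto the appropriate summand. Since $\la_1,\la_2$ are sub-Lie algebroids of $\la$ by hypothesis, nothing else is required.

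For (2), I would first dispose of the easy Lie-algebroid axioms for the proposed anchor $a=a_1+a_2$ and bracket on $\la=\la_1\oplus\la_2$. The $\R$-bilinearity and antisymmetry of the bracket are visible from the defining formula (antisymmetry using that of $\{\cdot,\cdot\}_1$ and $\{\cdot,\cdot\}_2$). The Leibniz rule $\{u,fv\}=f\{u,v\}+a(u)(f)\,v$ is checked by expanding both sides and using that the two $\nabla$'s obey Leibniz with respect to $a_1$ and $a_2$ respectively; the extra ``$a_i(u_i)(f)$'' terms recombine correctly precisely because $a(u)=a_1(u_1)+a_2(u_2)$. That $a$ induces a Lie algebra morphism on sections, $a(\{u,v\})=[a(u),a(v)]$, reduces — after using that $a_1,a_2$ are morphisms for their own brackets — exactly to the first equation of \eqref{eqn:matched_pair}. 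And $\la_1,\la_2$ embed as sub-Lie algebroids because restricting the bracket to $\Gamma(\la_i)$ annihilates every $\nabla$-term and returns $\{\cdot,\cdot\}_i$, while $a$ restricts to $a_i$.

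The one place where genuine work is needed is the Jacobi identity for the bracket of $\la_1\bowtie\la_2$. Since the Jacobiator $J(u,v,w)$ is $\R$-trilinear, it suffices to check $J\equiv 0$ on triples each of whose entries lies in $\Gamma(\la_1)$ or in $\Gamma(\la_2)$. Triples of type $(\la_1,\la_1,\la_1)$ and $(\la_2,\la_2,\la_2)$ yield nothing beyond the Jacobi identities of $\la_1$ and of $\la_2$. For a mixed triple, say $(\la_1,\la_1,\la_2)$, one expands $J(u_1,v_1,u_2)$ with the defining formula, sorts the outcome into its $\la_1$- and $\la_2$-components, and checks that each component vanishes, using the Jacobi identity of $\la_1$, the flatness of the two module structures, and the identities \eqref{eqn:matched_pair}; the remaining mixed type $(\la_1,\la_2,\la_2)$ is handled identically after interchanging the roles of the two factors. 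I expect this bookkeeping — tracking the handful of distinct kinds of terms produced by each iterated bracket and matching them against \eqref{eqn:matched_pair} — to be the main, and essentially the only, obstacle, though it is purely computational with no conceptual difficulty. Once the Jacobi identity is established, $(\la,a,\{\cdot,\cdot\})$ is a Lie algebroid by definition, and it is by construction the announced $\la_1\bowtie\la_2$.
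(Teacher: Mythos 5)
Your argument is correct, and in fact it is the standard verification: the paper itself gives no proof of this proposition, deferring to Huebschmann's work on twilled Lie--Rinehart algebras, and what you write out is precisely the argument one finds there (and in Mokri's and Lu's treatments of matched pairs). In part (1) your choices $\nabla_{u_1}u_2=p_2(\{u_1,u_2\})$, $\nabla_{u_2}u_1=p_1(\{u_2,u_1\})$ are the right ones, and your bookkeeping of where each hypothesis enters is accurate: $C^\infty$-linearity and Leibniz come from the Leibniz rule of $\la$ plus $p_2|_{\Gamma(\la_1)}=0=p_1|_{\Gamma(\la_2)}$, flatness of each module structure is the $p_2$- (resp.\ $p_1$-) projection of the Jacobi identity on a triple with two entries in $\Gamma(\la_1)$ (resp.\ $\Gamma(\la_2)$), the first equation of \eqref{eqn:matched_pair} is the anchor applied to a mixed bracket, and the other two are the remaining projections of mixed Jacobi identities. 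In part (2), reducing Jacobi to pure and mixed triples by additivity alone is legitimate (every section splits globally as $u_1+u_2$), and the mixed cases do close up using flatness and \eqref{eqn:matched_pair}, so the remaining work is, as you say, purely computational.

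One point you pass over too quickly: antisymmetry is \emph{not} visible from the bracket formula as displayed in the statement, because that formula contains a typo --- the terms $\nabla_{v_1}(u_2)$ and $\nabla_{u_2}(v_1)$ have been interchanged, so as written the mixed part is symmetric rather than antisymmetric, and the grouping no longer matches the splitting $\la_1\oplus\la_2$. The correct bracket, which your computation implicitly uses and which is forced by part (1), is
$$
\{u_1+u_2,v_1+v_2\} \;=\; \bigl(\{u_1,v_1\}_1+\nabla_{u_2}v_1-\nabla_{v_2}u_1\bigr)\;+\;\bigl(\{u_2,v_2\}_2+\nabla_{u_1}v_2-\nabla_{v_1}u_2\bigr)\ ,
$$
with the first summand in $\Gamma(\la_1)$ and the second in $\Gamma(\la_2)$. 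With this correction your proof goes through; it would be worth stating the corrected formula explicitly rather than asserting antisymmetry from the printed one.
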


Consider the groups 
$$K^{p,q} = \Gamma(\bigwedge^p L_1\dual \otimes \bigwedge^q L_2\dual).$$ 
The $\la_i$-module structures on $\la_{\hat{i}}$ 
(where $\hat{i}= 2$ for $i=1$ and $\hat{i}=1$ for $i=2$) induces an $\la_i$-module 
structure on $\bigwedge^p \la\dual_{\hat{i}}$. This leads to two differentials 
$\dc_{\la_1}:K^{p,q} \rightarrow K^{p+1,q}$ and $\dc_{\la_2}: K^{p,q} \rightarrow K^{p,q+1}$ induced by the 
module structure. The three equations \ref{eqn:matched_pair} are equivalent to the commutation rule 
$\dc_{\la_1}\dc_{\la_2} = (-1)^p\dc_{\la_2}\dc_{\la_1}$, i.e. the triple 
$(K^{\bullet,\bullet},\dc_{\la_1},\dc_{\la_2})$ is a double complex.

One has:
\begin{proposition} \label{prop:cohom_matched_pair}
The Lie algebroid cohomology of $\la = \la_1\bowtie \la_2$ is the cohomology of the total complex
associated to $(K^{\bullet,\bullet} , \dc_{\la_1} , \dc_{\la_2})$.
\end{proposition}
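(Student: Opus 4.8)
The plan is to show that the Lie algebroid differential $\dc_\la$ on $\bigwedge^\bullet \la\dual = \bigoplus_{p+q=\bullet} K^{p,q}$ coincides with the total differential $\dc_{\la_1} + (-1)^p \dc_{\la_2}$ of the double complex $(K^{\bullet,\bullet}, \dc_{\la_1}, \dc_{\la_2})$; once this identification of complexes is established, the statement about cohomology is immediate. The first step is the observation that, as a vector bundle, $\la\dual = \la_1\dual \oplus \la_2\dual$, so $\bigwedge^n \la\dual$ decomposes as claimed, and the bigrading is intrinsic. Thus the content of the proposition is entirely about how $\dc_\la$ interacts with this bigrading.

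The key computation is to expand the Koszul-type formula \eqref{eqn:differential_la} for $\dc_\la$ applied to a form $\theta \in K^{p,q}$, using the explicit anchor $a(u_1+u_2) = a_1(u_1)+a_2(u_2)$ and bracket of $\la = \la_1\bowtie\la_2$ given in the preceding proposition. One evaluates $(\dc_\la\theta)(w_0,\dots,w_{p+q})$ on arguments $w_k$ that are sections of either $\la_1$ or $\la_2$; by multilinearity and antisymmetry it suffices to treat the case where the first $p+1$ arguments lie in $\la_1$ and the remaining $q$ lie in $\la_2$ (to land in $K^{p+1,q}$), and symmetrically for the $K^{p,q+1}$ component. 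In that case: the anchor terms $a(w_i)(\theta(\dots))$ split into a piece using $a_1$ (contributing to $\dc_{\la_1}$) and a piece using $a_2$ (contributing to $\dc_{\la_2}$); the bracket terms $\{w_i,w_j\}$ split according to whether both indices are of type $1$, both of type $2$, or mixed. The pure type-$1$ and pure type-$2$ bracket terms reassemble, together with the corresponding anchor terms, into $\dc_{\la_1}\theta$ and $\dc_{\la_2}\theta$ respectively. The crucial point is that the \emph{mixed} bracket terms, where $w_i \in \Gamma(\la_1)$ and $w_j \in \Gamma(\la_2)$, produce exactly the contributions $\nabla_{w_i}(w_j) - \nabla_{w_j}(w_i)$ appearing in the $\la_1\bowtie\la_2$ bracket, and these are precisely the terms that the differentials $\dc_{\la_i}$ on $\bigwedge^\bullet\la\dual_{\hat i}$ — induced by the module structures — were defined to incorporate. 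Matching signs carefully (keeping track of the Koszul sign $(-1)^p$ when the $\la_2$-differential is applied after passing the $\la_1$-part) gives $\dc_\la = \dc_{\la_1} + (-1)^p\dc_{\la_2}$ on $K^{p,q}$.

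I expect the main obstacle to be purely bookkeeping: organizing the sign conventions so that the Koszul signs in \eqref{eqn:differential_la}, the signs in the $\la_1\bowtie\la_2$ bracket formula, and the sign $(-1)^p$ in the total differential of the double complex all line up. The three equations \eqref{eqn:matched_pair} are not needed to prove that $\dc_\la$ decomposes this way — they are needed (as already noted in the excerpt) to guarantee that $(K^{\bullet,\bullet},\dc_{\la_1},\dc_{\la_2})$ is genuinely a double complex, equivalently that $\dc_\la^2 = 0$ — so I would either invoke that remark or note that $\dc_\la^2=0$ holds automatically because $\la_1\bowtie\la_2$ is a Lie algebroid. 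Once $\dc_\la$ is identified with the total differential, the equality $H^\bullet(\la,\R) = H^\bullet(\operatorname{Tot} K^{\bullet,\bullet})$ is a tautology, completing the proof.
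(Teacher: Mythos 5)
Your proposal is correct. Note that the paper itself offers no proof of this proposition: it is recalled as a known fact, with a pointer to \cite{huebschmann_twilled}, so there is no ``paper proof'' to compare against; your argument is the standard direct verification one would write out. Two small points worth making explicit if you flesh it out. First, before restricting to evaluations with $p+1$ arguments in $\la_1$ and $q$ in $\la_2$ (and symmetrically), you should observe that for $\theta\in K^{p,q}$ the form $\dc_\la\theta$ has no components in bidegrees $(p+2,q-1)$ or $(p-1,q+2)$; this follows from the Koszul formula because $\theta$ has pure bidegree and because $\la_1,\la_2$ are subalgebroids of $\la_1\bowtie\la_2$ (brackets of two sections of the same type stay in that type), while the mixed brackets $\{u_1,u_2\}=\nabla_{u_1}u_2-\nabla_{u_2}u_1$ contribute only to the two expected bidegrees. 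Second, the sign bookkeeping you flag is genuinely convention-dependent (the paper's displayed commutation rule $\dc_{\la_1}\dc_{\la_2}=(-1)^p\dc_{\la_2}\dc_{\la_1}$ is itself stated loosely), so the cleanest formulation is exactly the one you give: fix the identification $\bigwedge^{n}\la\dual\iso\bigoplus_{p+q=n}K^{p,q}$ by the ``type-$1$ arguments first'' convention, check $\dc_\la=\dc_{\la_1}+(-1)^p\dc_{\la_2}$ on $K^{p,q}$, and then the cohomological statement is immediate, with $\dc_\la^2=0$ (automatic, since $\la$ is a Lie algebroid) guaranteeing the double-complex identities.
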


\section{Holomorphic Lie algebroids} 

\subsection{Holomorphic Lie algebroids and associated smooth Lie algebroids}

Let now $X$ be a complex manifold, and $\belT_X$ its holomorphic tangent bundle, whose dual is $\Omega_X$, 
the bundle of holomorphic $1$-forms.

\begin{definition}
A \emph{holomorphic Lie algebroid} is a triple $(\bella,a,\{\cdot,\cdot\})$ where 
\begin{itemize}
\item $\bella$ is a coherent locally free $\corO_X$-module; 
\item $\{\cdot,\cdot\}$ is a $\C_X$-Lie algebra structure on $\bella$;
\item the anchor $a: \bella\rightarrow \belT_X$ is a morphism of $\corO_X$-modules which is also a morphism of 
sheaves of $\C_X$-Lie algebras, satisfying the usual Leibniz rule for Lie algebroids.
\end{itemize}
\end{definition}

For a holomorphic Lie algebroid $\bella$, we denote by $\Omega^k_\bella$ the sheaf $\bigwedge^k \bella^*$,
and call its sections holomorphic $k$-$\bella$-forms. The holomorphic differential 
$\dc_\bella: \Omega_\bella^k \rightarrow \Omega_\bella^{k+1}$ is defined by the formula \eqref{eqn:differential_la}
with the appropriate modifications.

If $(\bella,a,\{\cdot,\cdot\})$ is a holomorphic Lie algebroid, we can consider $\bella$ as a smooth bundle 
over $X$. Then the anchor $a$ induces a map of smooth bundles $a_\R: \bella \rightarrow T_X$, and similarly 
the bracket $\{\cdot,\cdot\}$ induces a bracket on the smooth sections of $\bella$, that we denote by 
$\{\cdot,\cdot\}_\R$. We call $\bella_\R = (\bella,a_\R,\{\cdot,\cdot\}_\R)$ the 
\emph{real Lie algebroid associated to $\bella$}. Furthermore, $\bella$ has a natural almost complex structure, 
i. e. an endomorphism $J_\bella$ whose square is $-1$ and that makes the diagram
$$\xymatrix{
\bella \ar[d]^{J_\bella} \ar[r]^{a_\R} &T_X \ar[d]^{J_X} \\
\bella \ar[r]^{J_\bella} & T_X
}$$ 
commutative, where $J_X$ is the almost complex structure of $X$. So we have the splitting of the tensor product 
$\bella\otimes \C = \bella^{1,0} \oplus \bella^{0,1}$ according to the $\pm \sqrt{1}$ eigenvalues of 
$J_\bella$. Moreover, $\bella^{1,0}$ and $\bella^{0,1}$ are invariant for the bracket 
$\{\cdot,\cdot\}_\C$ (defined extending $\{\cdot,\cdot\}_\R$ by $\C$-bilinearity), so they are
real sub-Lie algebroids of $\bella\otimes \C$, i. e. they form a matched pair.

It was noticed in \cite{xu} that we can associate another smooth Lie algebroid to a holomorphic Lie algebroid $\bella$, that we denote by $\bella_h$. As we will see, the latter encodes the information on the holomorphic cohomology of $\bella$. 
\begin{proposition}
If $\bella$ is a holomorphic Lie algebroid, then $(T^{0,1}_X,\bella^{1,0})$ is naturally a matched pair, and we will denote by $\bella_h$ the twilled sum $\bella^{1,0} \bowtie T^{0,1}_X$.
\end{proposition}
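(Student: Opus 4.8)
The plan is to exhibit the two Lie algebroid module structures required by the definition of a matched pair and then to verify equations~\eqref{eqn:matched_pair}. Put $\la_1 = T^{0,1}_X$, $\la_2 = \bella^{1,0}$, and write $\rho := a_\C|_{\bella^{1,0}} : \bella^{1,0}\to T^{1,0}_X$ for the restriction of the complexified anchor. Since $a_\C$ is a morphism of complex Lie algebroids compatible with the $J$-eigenspace decompositions, $\rho$ is a morphism of smooth complex Lie algebroids over $X$; moreover, under the canonical isomorphisms of smooth complex bundles $\bella^{1,0}\cong(\bella,J_\bella)$ and $T^{1,0}_X\cong\belT_X$, the map $\rho$ is just the anchor $a$, hence a morphism of \emph{holomorphic} vector bundles. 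Equivalently, $\rho$ intertwines the Dolbeault operators: $\rho\circ\bar\partial^{\bella}=\bar\partial^{\belT_X}\circ\rho$. As the $\la_1$-module structure on $\la_2$ I take $\bar\partial^{\bella}$ itself: since a holomorphic structure on a smooth complex bundle is precisely a flat $T^{0,1}_X$-connection, $\bar\partial^{\bella}$ is one on $\bella^{1,0}$. As the $\la_2$-module structure on $\la_1$ I take
$$
\nabla_u\bar V \ :=\ \mathrm{pr}_{0,1}[\rho(u),\bar V]\ ,\qquad u\in\Gamma(\bella^{1,0}),\ \bar V\in\Gamma(T^{0,1}_X)\ ,
$$
the bracket and the projection $\mathrm{pr}_{0,1}:T_X\otimes\C\to T^{0,1}_X$ being computed inside the complexified tangent Lie algebroid; equivalently, $\nabla$ is the pull-back along $\rho$ of the canonical $T^{1,0}_X$-module structure on $T^{0,1}_X$ coming from the matched pair $T_X\otimes\C=T^{1,0}_X\bowtie T^{0,1}_X$.

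First I would check that $\bar\partial^{\bella}$ and $\nabla$ are genuine flat Lie algebroid connections, i.e.\ module structures. For $\bar\partial^{\bella}$ the Leibniz rule is part of the notion of a Dolbeault operator and flatness is the integrability identity $(\bar\partial^{\bella})^2=0$. For $\nabla$: the required tensoriality $\nabla_{fu}\bar V = f\,\nabla_u\bar V$ holds because $\rho(u)$ is a section of $T^{1,0}_X$, so $\mathrm{pr}_{0,1}\rho(u)=0$ kills the derivation term in $\mathrm{pr}_{0,1}[f\rho(u),\bar V]$; the Leibniz rule in $\bar V$ is immediate; and flatness is automatic, since the pull-back of a flat Lie algebroid connection along a Lie algebroid morphism is again flat, both the curvature and the defining bracket identity being transported by $\rho$.

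The rest of~\eqref{eqn:matched_pair} is governed by the single compatibility $\rho\circ\bar\partial^{\bella}=\bar\partial^{\belT_X}\circ\rho$, together with the description, in the matched pair $T^{1,0}_X\bowtie T^{0,1}_X$, of the $T^{0,1}_X$-action on $T^{1,0}_X$ as $\bar\partial^{\belT_X}_{\bar V}W=\mathrm{pr}_{1,0}[\bar V,W]$. Inserting $u_1=\bar V$, $u_2=u$ and the two module structures above, the first equation of~\eqref{eqn:matched_pair} reduces to $[\bar V,\rho(u)]=\mathrm{pr}_{0,1}[\bar V,\rho(u)]+\rho(\bar\partial^{\bella}_{\bar V}u)$, which holds since $\rho(\bar\partial^{\bella}_{\bar V}u)=\bar\partial^{\belT_X}_{\bar V}\rho(u)=\mathrm{pr}_{1,0}[\bar V,\rho(u)]$. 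The third equation of~\eqref{eqn:matched_pair} involves only brackets of $(0,1)$-vector fields and the action $\nabla$; it is obtained by substituting $W=\rho(u)$ into the corresponding identity for the matched pair $(T^{0,1}_X,T^{1,0}_X)$ (valid because $T^{1,0}_X$ and $T^{0,1}_X$ are complementary sub-Lie algebroids of $T_X\otimes\C$) and converting each term of the form $\nabla_{\bar\partial^{\belT_X}_{(\cdot)}\rho(u)}(\cdot)$ into $\nabla_{\bar\partial^{\bella}_{(\cdot)}u}(\cdot)$ by the compatibility.

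The only equation that does not reduce in this way — and hence the main point — is the second equation of~\eqref{eqn:matched_pair}, which here reads
$$
\bar\partial^{\bella}_{\bar V}\{u,v\}_\C \ =\ \{\bar\partial^{\bella}_{\bar V}u,\,v\}_\C+\{u,\,\bar\partial^{\bella}_{\bar V}v\}_\C+\bar\partial^{\bella}_{\nabla_v\bar V}u-\bar\partial^{\bella}_{\nabla_u\bar V}v
$$
for $u,v\in\Gamma(\bella^{1,0})$, $\bar V\in\Gamma(T^{0,1}_X)$: here the genuine bracket of $\bella^{1,0}$ occurs, so the identity cannot be deduced from the matched pair $(T^{0,1}_X,T^{1,0}_X)$ since $\rho$ need not be injective. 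I would verify it in a local holomorphic frame $(e_i)$ of $\bella$: writing $u=\sum_i f_i e_i$, $v=\sum_j h_j e_j$, expanding $\{u,v\}_\C$ by the Leibniz rule of the bracket, and applying $\bar\partial^{\bella}_{\bar V}$ termwise, every contribution carrying the structure constants $\{e_i,e_j\}_\C=\sum_k c^k_{ij}e_k$ or the coefficients of $a_\C(e_i)$ is annihilated by $\bar\partial^{\bella}$ (these being holomorphic), and the only remaining discrepancy with $\{\bar\partial^{\bella}_{\bar V}u,v\}_\C+\{u,\bar\partial^{\bella}_{\bar V}v\}_\C$ is the failure of the $(1,0)$-field $a_\C(e_i)$ to commute with the anti-holomorphic direction $\bar V$, which, again by holomorphicity of $a_\C(e_i)$, equals $-\nabla_{e_i}\bar V$; using the tensoriality of $\nabla$ in its first slot, this correction collects to exactly $\bar\partial^{\bella}_{\nabla_v\bar V}u-\bar\partial^{\bella}_{\nabla_u\bar V}v$. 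Once the three equations hold, $(T^{0,1}_X,\bella^{1,0})$ is a matched pair and $\bella_h:=\bella^{1,0}\bowtie T^{0,1}_X$ is a well-defined smooth Lie algebroid by the general construction of twilled sums; its naturality is evident, both module structures having been defined without any auxiliary choice.
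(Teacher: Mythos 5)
Your proof is correct, and it is built on exactly the two module structures the paper itself specifies right after the proposition (the Dolbeault operator of $\bella$ as the $T^{0,1}_X$-action on $\bella^{1,0}$, and $\nabla_u V = \mathrm{pr}^{0,1}[a(u),V]$ as the $\bella^{1,0}$-action on $T^{0,1}_X$): your reduction of the first and third matched-pair equations to the standard pair $T^{1,0}_X\bowtie T^{0,1}_X$ via holomorphicity of the anchor, and your local holomorphic-frame check of the second, are sound. The paper gives no proof of this statement, deferring to \cite{xu}, so your argument simply supplies the verification it omits while following the same intended construction.
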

The representation of $T^{0,1}_X$ on $\bella^{1,0}$ is given by the holomorphic structure of $\bella$, while the representation of $\bella^{1,0}$ on $T^{0,1}_X$ is given by:
$$
\nabla_{u}(V) = \text{pr}^{0,1}([a(u),V])
$$
for $u\in \Gamma(\bella^{1,0})$ and $V\in \Gamma(T^{0,1}_X)$.

We will call $\bella_h = \bella^{1,0}\bowtie T^{0,1}_X$ the \emph{canonical complex Lie algebroid} associated to the 
holomorphic Lie algebroid $\bella$.

\subsection{Cohomology of holomorphic Lie algebroids} \label{sec:cohomology}

Let $\bella$ be a holomorphic Lie algebroid. Define the \emph{holomorphic Lie algebroid cohomology} of 
$\bella$ by
$$
H^p_{hol}(\bella) = \bH^p(\Omega_\bella^\bullet, \dc_\bella),
$$
where $\bH$ denotes the hypercohomology of a complex of sheaves.

Let $\bella_h = \bella^{1,0} \bowtie T^{0,1}_X$ be the canonical complex Lie algebroid associated to $\bella$. 
The associated double complex that computes its cohomology is
$$
\belA^{p,q}_{\bella_h} = \belA^p_{\bella^{1,0}} \otimes \belA^{q}_{T^{0,1}_X}\ ,
$$
and we denote by $A^{p,q}_{\bella_h}$ the vector spaces of global sections.

Because of the $\bar{\partial}$-Poincar\'e lemma, for any $p$ the complex 
$(K^{p,q}, \bar{\partial})_{q\geq 0} $ is exact in positive degree, and the kernel of the first map 
$K^{p,0} \rightarrow K^{p,1}$ is $\Omega_\bella^p$. So we have the following generalization of classical
theorems (cf. \cite{xu} and \cite{ugo_volodya}):

\begin{theorem}
Let $\bella$ be a holomorphic Lie algebroid. Then we have the following isomorphisms:
\begin{enumerate}
\item (holomorphic De Rham)   $$H^p_{hol}(\bella) \iso H^p(\bella_h,\C)\ ;$$
\item (Dolbeault)
$$
H^q(X,\Omega^p_\bella) \iso H^q(A_{\bella_h}^{p,\bullet}, \bar{\partial}_{\bella_h}).
$$
\end{enumerate}
\end{theorem}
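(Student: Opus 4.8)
The plan is to deduce both isomorphisms from the $\bar\partial$-Poincar\'e lemma recalled above, together with the observation that the sheaves $\belA^{p,q}_{\bella_h} = \belA^p_{\bella^{1,0}}\otimes\belA^q_{T^{0,1}_X}$ are fine — they are sheaves of smooth sections of a smooth complex vector bundle — hence acyclic on the paracompact manifold $X$, so that sheaf cohomology, and more generally the hypercohomology of any bounded complex assembled from them, is computed by global sections. Throughout I write $\dc_{\bella^{1,0}}$ and $\bar\partial_{\bella_h}$ for the two differentials of the matched pair $\bella_h = \bella^{1,0}\bowtie T^{0,1}_X$; by Proposition \ref{prop:cohom_matched_pair}, $H^\bullet(\bella_h,\C)$ is the cohomology of the total complex of the double complex $(A^{\bullet,\bullet}_{\bella_h},\dc_{\bella^{1,0}},\bar\partial_{\bella_h})$.

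I would first dispose of the Dolbeault statement (2). For fixed $p$, the $\bar\partial$-Poincar\'e lemma asserts exactly that the complex of sheaves $(\belA^{p,\bullet}_{\bella_h},\bar\partial_{\bella_h})$ is exact in positive degree with kernel $\Omega^p_\bella$ in degree $0$, that is, it is a resolution of $\Omega^p_\bella$ by acyclic sheaves. Hence it computes the sheaf cohomology of $\Omega^p_\bella$, which gives $H^q(X,\Omega^p_\bella)\iso H^q(A^{p,\bullet}_{\bella_h},\bar\partial_{\bella_h})$.

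For the holomorphic De Rham statement (1), consider the total complex $\mathrm{Tot}^\bullet$ of the double complex of sheaves $(\belA^{\bullet,\bullet}_{\bella_h},\dc_{\bella^{1,0}},\bar\partial_{\bella_h})$, a bounded complex of acyclic sheaves, and compute $\bH^\bullet(X,\mathrm{Tot}^\bullet)$ in two ways. On the one hand, each column $(\belA^{p,\bullet}_{\bella_h},\bar\partial_{\bella_h})$ is a resolution of $\Omega^p_\bella$ by the $\bar\partial$-Poincar\'e lemma, and the operator induced by $\dc_{\bella^{1,0}}$ on these kernels is $\dc_\bella$; the spectral sequence of the double complex of sheaves therefore has $E_1$ concentrated on the line $q=0$, where it reads $(\Omega^\bullet_\bella,\dc_\bella)$, so the inclusion $(\Omega^\bullet_\bella,\dc_\bella)\hookrightarrow\mathrm{Tot}^\bullet$ is a quasi-isomorphism of complexes of sheaves and $\bH^\bullet(X,\mathrm{Tot}^\bullet)\iso\bH^\bullet(X,\Omega^\bullet_\bella)=H^\bullet_{hol}(\bella)$. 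On the other hand, since the terms of $\mathrm{Tot}^\bullet$ are acyclic, $\bH^\bullet(X,\mathrm{Tot}^\bullet)$ is the cohomology of the complex of global sections $\Gamma(X,\mathrm{Tot}^\bullet)$, which is precisely the total complex of $(A^{\bullet,\bullet}_{\bella_h},\dc_{\bella^{1,0}},\bar\partial_{\bella_h})$, whose cohomology is $H^\bullet(\bella_h,\C)$. Comparing the two computations gives $H^p_{hol}(\bella)\iso H^p(\bella_h,\C)$.

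The only genuinely non-formal ingredient is the $\bar\partial$-Poincar\'e lemma itself, whose proof amounts to identifying the $\bar\partial_{\bella_h}$-closed elements of $\belA^p_{\bella^{1,0}}$ with the holomorphic $p$-$\bella$-forms — here one uses that the $T^{0,1}_X$-module structure on $\bella^{1,0}$ is the one encoding the holomorphic structure of $\bella$ — and then checking local exactness by choosing a local holomorphic frame of $\bella$ and reducing to the classical $\bar\partial$-Poincar\'e lemma on a polydisc. As we are entitled to assume that lemma, what remains is the homological bookkeeping above; the one point to keep in mind is that (1) concerns the hypercohomology of the complex $\Omega^\bullet_\bella$, which has more than one term, whereas (2) is about ordinary sheaf cohomology of the single sheaf $\Omega^p_\bella$, so the two halves of the statement are not quite parallel.
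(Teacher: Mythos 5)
Your argument is correct and follows essentially the same route as the paper, which deduces both isomorphisms from the $\bar\partial$-Poincar\'e lemma by viewing $(\belA^{p,\bullet}_{\bella_h},\bar\partial_{\bella_h})$ as a fine resolution of $\Omega^p_\bella$ and then, for the de Rham part, running the standard double-complex/hypercohomology comparison together with Proposition \ref{prop:cohom_matched_pair} (the paper only sketches this, referring to \cite{xu} and \cite{ugo_volodya}). Your write-up just makes the homological bookkeeping explicit; no discrepancy with the paper's approach.
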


Remark that the second of the previous isomorphisms can be readily generalized to the case of coefficients
in a holomorphic vector bundle: if $\belE$ is a holomorphic vector bundle, there is a natural way to define
the associated Dolbeault $\bella$-complex $\belA^{p,q}_{\bella_h}(\belE) = \belA^{p,q}_{\bella_h} \otimes \belE$
with an operator 
$$\bar{\partial}_\belE \: : \: \belA^{p,q}_{\bella_h}(\belE) \rightarrow \belA^{p,q+1}_{\bella_h}(\belE)$$ 
defined by the holomorphic structure of $\belE$. Then there are isomorphisms
\begin{equation} \label{eqn:gen_dolbeault}
H^q(X; \Omega_\bella^p \otimes \belE) \iso H^q(A_{\bella_h}^{p,\bullet}(\belE), \bar{\partial}_\belE).
\end{equation} 
As a notation, we will write $H^{p,q}(\bella_h; \belE)$ for the cohomology groups 
$H^q(A_{\bella_h}^{p,\bullet}(\belE), \bar{\partial}_\belE)$.

\spazio
Let $\gotU=\{U_\alpha\}$ be a sufficiently fine open covering of $X$, such that we have an isomorphism between sheaf and $\check{\text{C}}$ech cohomology over it.

Consider the double complex
$$
K^{p,q} = \check{C}^q (\gotU, \Omega_\bella^p) 
$$
with differentials $\dc_\bella,\check{\delta}$; its associated total complex $(T_\bella^\bullet, \delta)$ computes the hypercohomology of $\Omega_\bella^\bullet$, so it computes $H^k(\bella,\C)$.

The filtration by columns of the total complex
$$
F^r T_\bella ^k = \bigoplus_{p+q = k,\ p \geq r} K^{p,q}
$$
induces a filtration in the Lie algebroid cohomology:
$$
F^pH^k(\bella,\C) = \image(H^k(F^p T_\bella^\bullet) \rightarrow H^k(\bella,\C)).
$$

The associated spectral sequence has $E_1$ and $E_2$ terms given by
$$
E_1^{p,q} = H^{q}(X, \Omega_\bella^p) \qquad E_2^{p,q} = H^p(H^q(X,\Omega_\bella^\bullet), \dc_\bella).
$$
In the classical case, when $\bella = \belT_X$ and $X$ is a compact K\"ahler manifold, the Hodge decomposition implies that  $E^{p,q}_1 = E^{p,q}_\infty$ and that the differential $d_1$ is zero. Let us rewrite this fact as:
\begin{lemma}
Let $X$ be a compact K\"ahler manifold, and $\phi_{i_0,\ldots,i_q}$ a closed $\check{\text{C}}$ech $q$-cochain of $\Omega^p_X$.

Then the $\check{\text{C}}$ech $q$-cochain $\dc \phi_{i_0,\ldots,i_q}$ is $\check{\text{C}}$ech-exact, i.e. $\dc \phi = \check{\delta} \tau$ for some $\tau \in \check{C}^{q-1}(\gotU,\Omega_X^{p+1})$.
\end{lemma}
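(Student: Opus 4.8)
The plan is to read the assertion as the vanishing of the first differential $d_1$ of the spectral sequence set up above, specialized to $\bella=\belT_X$, and to derive that vanishing from Hodge theory on the compact K\"ahler manifold $X$.

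First I would unwind what is being claimed. A closed $\check{\text{C}}$ech $q$-cochain $\phi$ of $\Omega^p_X$ is an element of $K^{p,q}=\check{C}^q(\gotU,\Omega^p_X)$ with $\check\delta\phi=0$, hence it defines a class $[\phi]\in\check{H}^q(\gotU,\Omega^p_X)=H^q(X,\Omega^p_X)$, the covering $\gotU$ being fine enough. Since $\dc$ is a morphism of sheaves it commutes with $\check\delta$, so $\dc\phi\in K^{p+1,q}$ is again $\check\delta$-closed and represents a class $[\dc\phi]\in H^q(X,\Omega^{p+1}_X)$; the conclusion ``$\dc\phi=\check\delta\tau$ for some $\tau\in\check{C}^{q-1}(\gotU,\Omega^{p+1}_X)$'' is exactly the vanishing of $[\dc\phi]$. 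With the filtration by columns one has $E_1^{p,q}=H^q(X,\Omega^p_X)$, and the map $d_1\colon E_1^{p,q}\to E_1^{p+1,q}$ is the one induced by $\dc$, i.e. $d_1[\phi]=\pm[\dc\phi]$. So the lemma is precisely the statement that $d_1=0$ for the Hodge--de Rham (Fr\"olicher) spectral sequence of $X$, which is the fact recalled just before it.

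To keep the argument self-contained I would then prove $d_1=0$ via harmonic representatives. By the Dolbeault theorem $H^q(X,\Omega^p_X)\iso H^{p,q}_{\bar{\partial}}(X)$, and under this isomorphism $d_1$ corresponds to the map $H^{p,q}_{\bar{\partial}}(X)\to H^{p+1,q}_{\bar{\partial}}(X)$ induced by the holomorphic differential $\partial$, namely $[\alpha]_{\bar{\partial}}\mapsto[\partial\alpha]_{\bar{\partial}}$; this is the usual comparison of the holomorphic de Rham complex $\Omega^\bullet_X$ with the total Dolbeault complex of $X$. Now take for $\alpha$ the $\Delta_{\bar{\partial}}$-harmonic representative of $[\phi]$. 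On a compact K\"ahler manifold the K\"ahler identities give $\Delta_\partial=\Delta_{\bar{\partial}}$, so $\alpha$ is also $\partial$-harmonic, and in particular $\partial\alpha=0$; hence $[\dc\phi]$ corresponds to $[\partial\alpha]_{\bar{\partial}}=0$, which produces the desired $\tau$.

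The only real work is the bookkeeping in the reduction step: one must verify that the differential $d_1$ arising from the $\check{\text{C}}$ech--holomorphic-de-Rham double complex $K^{p,q}=\check{C}^q(\gotU,\Omega^p_X)$ agrees, under the Dolbeault isomorphism, with the $\partial$-induced map on Dolbeault cohomology. This is a diagram chase comparing that double complex with the $\check{\text{C}}$ech cochains of the fine resolution $\Omega^p_X\hookrightarrow\belA^{p,\bullet}_X$ (equivalently, a $\check{\text{C}}$ech--Dolbeault tricomplex), with some care for signs. Everything else invoked — the commutation of $\dc$ with $\check\delta$, the existence of harmonic representatives, and the K\"ahler identity $\Delta_\partial=\Delta_{\bar{\partial}}$ — is standard; and since the lemma is explicitly only a reformulation of the degeneration at $E_1$ recalled above, one may alternatively take that as known and regard the proof as the translation carried out in the second paragraph.
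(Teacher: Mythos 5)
Your proposal is correct and follows essentially the paper's own route: the paper states this lemma precisely as a \v{C}ech rephrasing of the classical fact that on a compact K\"ahler manifold the differential $d_1$ of the Hodge--de Rham spectral sequence vanishes (a consequence of Hodge theory), which is exactly the identification you carry out in your second paragraph. Your additional harmonic-representative argument via $\Delta_\partial=\Delta_{\bar\partial}$ simply supplies the standard proof of that classical input, which the paper takes as known.
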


Now, for a general holomorphic Lie algebroid $\bella$, there is no analogue of Hodge decomposition, so we do not have degeneration of the spectral sequence at the first step. Anyway, we can use this lemma to find a mild degeneration of the spectral sequence: remark that on functions $\dc_\bella$ coincides with the composition of the exterior differential $\dc$ with the dual of the anchor, i.e. $\dc_\bella(f) = a\dual(\dc f)$ for any $f\in\corO_X$. Then, since the differential $d_1$ coincides with $\dc_\bella$, the previous lemma implies:
\begin{lemma}
If $X$ is a compact K\"ahler manifold, then,
restricted to $p=0$, the differential $d_1$ of the spectral sequence $d_1: E_1^{0,q} \rightarrow E_1^{1,q}$ is zero.
\end{lemma}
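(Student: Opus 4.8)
The plan is to compute $d_1$ explicitly on $E_1^{0,q}$ and reduce it to the previous lemma via the dual anchor $a\dual$. First I would recall that the spectral sequence arises from the double complex $K^{p,q}=\check C^q(\gotU,\Omega^p_\bella)$ filtered by the column index $p$: its $E_0$-differential is the \v{C}ech differential $\check\delta$, so $E_1^{p,q}=H^q(X,\Omega^p_\bella)$, and the induced differential $d_1\colon E_1^{p,q}\to E_1^{p+1,q}$ is the map on \v{C}ech cohomology induced by $\dc_\bella$. Hence a class in $E_1^{0,q}=H^q(X,\corO_X)$ is represented by a $\check\delta$-closed \v{C}ech $q$-cochain $\phi=(\phi_{i_0\cdots i_q})$ with values in $\corO_X=\Omega^0_\bella$, and $d_1[\phi]$ is the class in $H^q(X,\Omega^1_\bella)$ of $\dc_\bella\phi$, which is automatically $\check\delta$-closed since $\dc_\bella$ and $\check\delta$ anticommute on the double complex.

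Next I would use the factorization of the Lie algebroid differential on functions: $\dc_\bella=a\dual\circ\dc$, where $\dc\colon\corO_X\to\Omega^1_X$ is the holomorphic exterior differential and $a\dual\colon\Omega^1_X\to\bella\dual=\Omega^1_\bella$ is the dual of the anchor. Applying this coefficientwise to $\phi$ gives $\dc_\bella\phi=a\dual(\dc\phi)$, with $\dc\phi\in\check C^q(\gotU,\Omega^1_X)$. Since $\phi$ is a $\check\delta$-closed \v{C}ech $q$-cochain of $\Omega^0_X=\corO_X$ and $X$ is compact K\"ahler, the previous lemma (applied with $p=0$) gives $\dc\phi=\check\delta\tau$ for some $\tau\in\check C^{q-1}(\gotU,\Omega^1_X)$. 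Because $a\dual$ is an $\corO_X$-linear morphism of sheaves, it commutes with $\check\delta$ when applied coefficientwise, so $\dc_\bella\phi=a\dual(\check\delta\tau)=\check\delta(a\dual\tau)$ is $\check\delta$-exact. Therefore its class in $H^q(X,\Omega^1_\bella)=E_1^{1,q}$ vanishes, i.e. $d_1[\phi]=0$; since $[\phi]$ was arbitrary, $d_1$ is zero on the column $p=0$.

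I do not expect a genuine obstacle here: all the analytic content is imported from the previous lemma, which is just the $E_1$-degeneration for $X$ itself, and the rest is the remark that $\dc_\bella$ restricted to functions is $\dc$ followed by the sheaf morphism $a\dual$. The only delicate points are two pieces of bookkeeping: first, identifying the $d_1$ of a column-filtered double complex with the map induced by the horizontal differential $\dc_\bella$ on $\check\delta$-closed cochains (standard, but worth spelling out); and second, noting that both $\dc$ on $\corO_X$ and $a\dual$ commute with the \v{C}ech differential, which is immediate since they are morphisms of sheaves applied coefficientwise. I expect the write-up to be short.
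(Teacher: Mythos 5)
Your proposal is correct and follows the paper's own argument: the paper likewise identifies $d_1$ with the map induced by $\dc_\bella$, uses $\dc_\bella(f)=a\dual(\dc f)$ on functions, and invokes the previous lemma (with $p=0$) so that $\dc\phi=\check\delta\tau$ implies $\dc_\bella\phi=\check\delta(a\dual\tau)$ is exact. Your write-up just spells out the bookkeeping the paper leaves implicit.
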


This can be rephrased as the degeneration at the first step of the spectral sequence associated to the 
truncated complex
$$
\tilde{K}^{p,q} = \left\{ \begin{array}{lcc}
K^{p,q} & & p=0,1 \\
0 & & p>1
\end{array} \right. \  .
$$

In particular, this gives the following corollary:
\begin{corollary} \label{cor:filtration_cohomology}
Over a compact K\"ahler manifold $X$ there are isomorphisms
$$
F^1H^k(\bella,\C) \iso H^k(F^1T_\bella^\bullet)\ .
$$
for any $k \geq 0$.
\end{corollary}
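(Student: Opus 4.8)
The plan is the following. By the very definition of the Hodge filtration on $H^k(\bella,\C)$, the natural map $\iota\colon H^k(F^1T_\bella^\bullet)\to H^k(T_\bella^\bullet)=H^k(\bella,\C)$ has image $F^1H^k(\bella,\C)$, so the assertion is equivalent to the injectivity of $\iota$ for every $k$. To prove this I would use the short exact sequence of complexes
$$0\longrightarrow F^1T_\bella^\bullet\longrightarrow T_\bella^\bullet\stackrel{\mathrm{pr}}{\longrightarrow}Q^\bullet\longrightarrow 0,$$
where $Q^\bullet=\check C^\bullet(\gotU,\corO_X)=K^{0,\bullet}$ is the \v{C}ech complex of $\corO_X$ (so $H^k(Q^\bullet)=H^k(X,\corO_X)$) and $\mathrm{pr}$ is the projection onto the $p=0$ column. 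Chasing the associated long exact sequence, $\iota$ is injective in degree $k$ if and only if the edge map $\rho\colon H^{k-1}(\bella,\C)\to H^{k-1}(X,\corO_X)$ induced by $\mathrm{pr}$ is surjective; so it suffices to prove that $\rho$ is surjective in every degree.

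For this I would bring in the dual anchor. Exactly as recalled in Section \ref{sec:real_lie} (in its holomorphic version), $a\dual$ defines a morphism of complexes of sheaves $(\Omega_X^\bullet,\dc)\to(\Omega_\bella^\bullet,\dc_\bella)$, and this morphism is the identity in degree $0$, since $\Omega_X^0=\corO_X=\Omega_\bella^0$. Composing with the projection $\Omega_\bella^\bullet\to\Omega_\bella^0[0]=\corO_X[0]$ onto degree $0$ produces the corresponding projection for $X$; passing to \v{C}ech--hypercohomology, this means that the composite
$$H^k_{DR}(X,\C)\stackrel{a^*}{\longrightarrow}H^k(\bella,\C)\stackrel{\rho}{\longrightarrow}H^k(X,\corO_X)$$
is precisely the edge homomorphism of the Hodge--de Rham spectral sequence of $X$, namely the surjection $H^k_{DR}(X,\C)\to E_\infty^{0,k}$ followed by the inclusion $E_\infty^{0,k}\to E_1^{0,k}=H^k(X,\corO_X)$. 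Since $X$ is compact K\"ahler this spectral sequence degenerates at $E_1$ (the Hodge-theoretic fact already invoked above), hence $E_\infty^{0,k}=E_1^{0,k}=H^k(X,\corO_X)$ and that edge map is onto; therefore $\rho$ is onto, and the statement follows. Equivalently, the morphism of spectral sequences induced by $a\dual$ is the identity on the $0$-th columns of the $E_1$-pages, and the differentials leaving the $0$-th column vanish for $X$; by naturality they vanish for $\bella$ as well, so $E_\infty^{0,q}(\bella)=E_1^{0,q}(\bella)=H^q(X,\corO_X)$, which is the surjectivity of $\rho$ restated.

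The one point that is not formal --- and the main obstacle --- is the vanishing of all the higher differentials $d_r$ ($r\ge2$) leaving the $0$-th column of the spectral sequence computing $H^\bullet(\bella,\C)$: the Lemma preceding the corollary only takes care of $d_1$, and since there is no Hodge decomposition available for a general holomorphic Lie algebroid this vanishing has to be transported from the compact K\"ahler manifold $X$ along the anchor, as above. The remaining ingredients --- the long exact sequence, the identification of the image of $\iota$ with $F^1H^k(\bella,\C)$, and the fact that $a\dual$ is a morphism of complexes --- are routine.
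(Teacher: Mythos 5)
Your proof is correct, and it is in fact more complete than the argument the paper records. The paper's own deduction consists of the lemma that $d_1\colon E_1^{0,q}\to E_1^{1,q}$ vanishes (obtained, exactly as you do, by writing $\dc_\bella=a^*\circ\dc$ on functions and invoking the classical K\"ahler statement), rephrased as $E_1$-degeneration of the truncated two-column complex $\tilde{K}^{p,q}$, after which the corollary is simply asserted. As you observe, the vanishing of $d_1$ on the $p=0$ column is not by itself formally sufficient: injectivity of $H^k(F^1T_\bella^\bullet)\to H^k(\bella,\C)$ amounts to $E_\infty^{0,q}=E_1^{0,q}$, i.e.\ to the vanishing of \emph{all} differentials $d_r$, $r\geq 1$, leaving the zeroth column. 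Your way of supplying this --- the short exact sequence $0\to F^1T_\bella^\bullet\to T_\bella^\bullet\to K^{0,\bullet}\to 0$, the reduction to surjectivity of $\rho$, and the identification of $\rho\circ a^*$ with the classical edge map, surjective on a compact K\"ahler manifold by Hodge--de Rham degeneration (equivalently, killing the higher $d_r$ on the zeroth column by naturality along $a^*$) --- is the same mechanism the paper uses for $d_1$ (transport of the Hodge-theoretic degeneration along the anchor, using that $a^*$ is the identity on the $p=0$ column), pushed through systematically to all pages. An equivalent cocycle-level formulation, closer in spirit to the paper's explicit lemma: any $\check{\delta}$-cocycle $y_0$ of $\corO_X$ extends, by classical Hodge theory, to a total cocycle $Y$ of the \v{C}ech--holomorphic de Rham double complex of $X$; then $a^*Y$ is a total cocycle for $\bella$ extending $y_0$, which gives the surjectivity of $\rho$ directly. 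So your route is correct, rests on the same underlying idea as the paper, and fills in the step the paper leaves implicit --- precisely the point you rightly flag as the non-formal one.
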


Since in the next sections we will be interested in $F^1H^2(\bella;\C)$, we need to represent its elements explicitly.
The elements of $H^2(F^1T_\bella^\bullet)$ are represented by closed elements of $F^1T_\bella^2=K^{1,1} \oplus K^{2,0}$, that is, pairs $(\phi_{\alpha\beta}, Q_\alpha)$ satisfying the equations
\begin{equation} \label{eqn:differential}
\begin{array}{c}
(\check{\delta} \phi)_{\alpha\beta\gamma} =0,\\
\dc_\bella \phi_{\alpha\beta} = (\check{\delta}Q)_{\alpha\beta},\\
\dc_\bella Q_\alpha = 0;
\end{array} \end{equation}
while coboundaries of $F^1T^2_\bella$ are of the form $((\check{\delta}\eta)_{\alpha\beta} , \dc_\bella \eta_\alpha)$ for $\eta_\alpha \in K^{1,0}$.

Hence we have a natural projection
$$
H^2(F^1T_\bella^\bullet) \rightarrow H^1(X, \Omega_\bella^1)\ , \qquad [(\phi,Q)] \rightarrow [\phi]\ ,
$$
well defined by the fact that if $(\phi,Q) = \delta \eta$ then $\phi = \check{\delta} \eta$.

\spazio
One has functoriality properties: any holomorphic Lie algebroid morphism $\Psi: \bella \rightarrow \bella'$ yields pull back morphisms $H^p(\bella',\C) \rightarrow H^p(\bella,\C)$ and $F^pH^k(\bella',\C)\rightarrow F^pH^k(\bella,\C)$.

In particular, we can apply this to the anchor map $a$ of a holomorphic Lie algebroid $\bella$: this yields morphisms $H^p_{DR}(X,\C)\rightarrow H^p(\bella,\C)$ and $F^pH^{k}(X,\C) \rightarrow F^pH^{k}(\bella,\C)$.

\subsection{Holomorphic $\bella$-connections}
Let now $\bella$ be a holomorphic Lie algebroid over a compact K\"ahler manifold $X$, and $\belE$ a holomorphic vector bundle on $X$. Similarly to the smooth case, we have:
\begin{definition}
A holomorphic $\bella$-connection on $\belE$ is a map of sheaves 
$\nabla: \belE\rightarrow \belE\otimes \Omega_\bella$ satisfying the Leibniz rule $\nabla(fe) = f\nabla e + e \otimes \dc_\bella f$ for any $f\in\corO_X$ and $e\in \belE$.
\end{definition}

The curvature $F_\nabla\in H^0(X,\cEnd (\belE) \otimes\Omega_\bella^2)$ of a holomorphic $\bella$-connection $\nabla$ is defined in the same way as in the smooth case. 

\spazio
Let $\bella$ be a holomorphic Lie algebroid and consider the associated Lie algebroid $\bella_h = \bella^{1,0}\bowtie T^{0,1}_X$. 
Let $\belE$ be a smooth vector bundle and $\nabla$ a $\bella_h$-connection on it. Since $\bella_h = \bella^{1,0} \oplus T^{0,1}_X$ as a vector bundle, $\nabla$ splits in two operators
$$
\nabla' : \Gamma(\belE) \rightarrow \Gamma(\belE) \otimes A^1_{\bella^{1,0}}\ , \qquad 
\nabla'': \Gamma(\belE) \rightarrow \Gamma(\belE) \otimes A^{0,1}_X\ ,
$$
satisfying the Leibniz rules 
$$
\nabla'(fs) = f \nabla'(s) + a_{\bella^{1,0}}(f) \otimes s\ , \qquad
\nabla''(fs) = f \nabla'' s + \bar{\partial}f \otimes s\ ,
$$
for $f\in C^\infty(X)$ and $s\in \Gamma(E)$.

The following lemma is straightforward:
\begin{lemma} \label{lem:connections}
Let $\bella$ be a holomorphic Lie algebroid over $X$, $E$ a smooth vector bundle on $X$ and $\nabla$ a 
smooth $\bella_h$ connection on $\belE$. Let $F$ be the curvature of $\nabla$, and 
$F = F^{2,0} + F^{1,1} + F^{0,2}$ according to the splitting of $A^2_{\bella_h}(\End \belE)$.

Then
\begin{enumerate}
\item $F^{0,2} = 0$ if and only if $\nabla''$ defines a holomorphic structure on $E$;
\item $F^{1,1} + F^{0,2} = 0$ if and only if $\nabla'$ is induced by a holomorphic $\bella$-connection,
where $E$ is equipped with the holomorphic structure defined by $\nabla''$;
\item $F = 0$ if and only if $\nabla$ is induced by a flat holomorphic $\bella$-connection.
\end{enumerate}
\end{lemma}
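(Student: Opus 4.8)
The plan is to unwind the definition of the curvature of a $\bella_h$-connection using the matched-pair decomposition $\bella_h = \bella^{1,0} \bowtie T^{0,1}_X$, and to read off the three claims by bidegree. First I would recall that since $\bella_h = \bella^{1,0} \oplus T^{0,1}_X$ as a smooth vector bundle, the space of $2$-$\bella_h$-forms with values in $\End\belE$ splits as
$$
A^2_{\bella_h}(\End\belE) = A^{2,0}_{\bella_h}(\End\belE) \oplus A^{1,1}_{\bella_h}(\End\belE) \oplus A^{0,2}_{\bella_h}(\End\belE),
$$
and that the connection $\nabla$ splits as $\nabla = \nabla' + \nabla''$ as in the text. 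Computing $F_\nabla = \nabla \circ \nabla$ and collecting terms by the bidegree of the form part, one gets $F^{0,2} = \nabla'' \circ \nabla''$ (the $(0,2)$-component depending only on $\nabla''$ and the bracket on $T^{0,1}_X$, which is just the commutator of $(0,1)$-vector fields), $F^{1,1}$ the ``mixed'' term involving $\nabla'$, $\nabla''$ and the matched-pair actions, and $F^{2,0} = \nabla'\circ\nabla'$ (depending only on $\nabla'$ and the bracket on $\bella^{1,0}$).

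For part (1), the statement is precisely the Koszul–Malgrange/Newlander–Nirenberg-type integrability theorem: an operator $\nabla''$ on a smooth complex vector bundle $\belE$ satisfying the $\bar\partial$-Leibniz rule $\nabla''(fs) = f\nabla'' s + \bar\partial f \otimes s$ defines a holomorphic structure on $\belE$ if and only if $\nabla''\circ\nabla'' = 0$, i.e.\ $F^{0,2}=0$. I would cite this as the classical fact it is (for instance via Koszul–Malgrange) rather than reproving it. For part (2), assuming $F^{0,2}=0$ so that $\belE$ carries the holomorphic structure with $\bar\partial_\belE = \nabla''$, the condition that $\nabla'$ come from a holomorphic $\bella$-connection is exactly that $\nabla'$ be $\bar\partial_\belE$-holomorphic as a section of $\cEnd(\belE)\otimes\Omega_\bella$, which unwinds to the vanishing of the $(1,1)$-component of $F_\nabla$; here one uses that the $(1,1)$ part of the curvature of $\nabla'+\nabla''$ is precisely the obstruction to $\nabla'$ being a holomorphic differential operator $\belE \to \belE\otimes\Omega_\bella$, i.e.\ $F^{1,1} = 0 \iff \nabla'$ descends to $\nabla\colon \belE\to\belE\otimes\Omega_\bella$ with $\nabla(fe) = f\nabla e + e\otimes\dc_\bella f$. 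Combined with $F^{0,2}=0$ this is the claim ``$F^{1,1}+F^{0,2}=0$''. Finally, part (3) follows by combining (1) and (2) with the observation that, once $\nabla''$ is integrable and $\nabla'$ comes from a holomorphic $\bella$-connection $\nabla_{\mathrm{hol}}$, the remaining component $F^{2,0}$ is exactly $F_{\nabla_{\mathrm{hol}}}$, the curvature of the holomorphic $\bella$-connection as defined above; hence $F=0 \iff F^{0,2}=F^{1,1}=F^{2,0}=0 \iff \nabla$ is induced by a flat holomorphic $\bella$-connection.

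The only genuinely nontrivial input is the Koszul–Malgrange integrability theorem used in part (1); everything else is bookkeeping with bidegrees in the double complex $(K^{\bullet,\bullet},\dc_{\bella^{1,0}},\bar\partial)$ and the explicit formula $F_\nabla(u,v)(e) = [\nabla_u,\nabla_v]e - \nabla_{\{u,v\}}e$ for the components of the curvature, together with the structure of the matched-pair bracket on $\bella_h$ spelled out before the lemma. I would therefore present the proof as: (i) recall the bidegree decomposition of $\nabla$ and $F_\nabla$; (ii) identify $F^{0,2}$, $F^{1,1}$, $F^{2,0}$ with the three operators above by a direct computation from the matched-pair bracket; (iii) invoke Koszul–Malgrange for (1); (iv) check that a holomorphic $\bella$-connection is the same as a $\nabla'$ with $F^{1,1}+F^{0,2}=0$, giving (2); (v) assemble (3). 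This is why the lemma is ``straightforward'': it is a direct translation of the classical Higgs/connection bidegree dictionary to the Lie algebroid setting, with the matched-pair formalism doing the work of keeping track of the mixed terms.
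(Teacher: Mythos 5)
Your proof is correct and is exactly the argument the paper has in mind: the paper states the lemma without proof (``straightforward''), and the intended justification is precisely your bidegree decomposition of $F_\nabla$ via the matched-pair structure of $\bella_h = \bella^{1,0}\bowtie T^{0,1}_X$, with the Koszul--Malgrange integrability theorem supplying part (1) and the remaining parts being the bookkeeping you describe.
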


\spazio
We now study the problem of existence of holomorphic $\bella$-connections over a holomorphic vector bundle $\belE$. For $\bella=\belT_X$ the problem is well known, and we refer to \cite{atiyah} for further details: let $\belJ^1_X(\belE)$ be the bundle of holomorphic first order operators on $\belE$ with scalar symbol. It admits naturally a Lie algebroid structure, whose anchor is the symbol and whose bracket is the commutator of differential operators. It is usually called the \emph{Atiyah Lie algebroid} of $\belE$. There is a natural short exact sequence
\begin{equation} \label{atiyah_seq}
0 \rightarrow \cEnd(\belE) \rightarrow \belJ^1_X(\belE) \rightarrow \belT_X \rightarrow 0\ .
\end{equation}

A holomorphic ($\belT_X$-)connection on $\belE$ is equivalent to a splitting of this exact sequence, so there exists a holomorphic connection on $\belE$ if and only if the extension class 
$$
\mathfrak{a}(\belE)\in\Ext^1(\belT_X,\cEnd(\belE)) = H^1(X, \cEnd(\belE) \otimes \Omega_X )
$$ 
is zero. It is a theorem of Atiyah that the class $\mathfrak{a}(\belE)$ generates the characteristic ring of $\belE$, so one has:
\begin{theorem}
Let $\belE$ be a holomorphic vector bundle over a compact K\"ahler manifold $X$.

Then $\belR(\belE) = 0$ is a necessary condition for the existence of a holomorphic connection on $\belE$.
\end{theorem}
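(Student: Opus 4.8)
The plan is to obtain the statement as a direct consequence of the two facts recalled just before it: that a holomorphic $\belT_X$-connection on $\belE$ is the same datum as a holomorphic splitting of the Atiyah sequence \eqref{atiyah_seq}, and that the Atiyah class $\mathfrak{a}(\belE)$ generates the characteristic ring $\belR(\belE)$. So there is essentially one move to make, and the rest is bookkeeping.

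First I would assume that $\belE$ admits a holomorphic connection $\nabla$. By the discussion preceding the statement, $\nabla$ corresponds to an $\corO_X$-linear splitting $\belT_X \to \belJ^1_X(\belE)$ of \eqref{atiyah_seq}, so the extension class $\mathfrak{a}(\belE)\in\Ext^1(\belT_X,\cEnd(\belE)) = H^1(X,\cEnd(\belE)\otimes\Omega_X)$ is zero. Then I would invoke Atiyah's description of the generators of $\belR(\belE)$: given $P\in I^k(\GL_r)$, one forms the $k$-fold cup product $\mathfrak{a}(\belE)^{\smile k}\in H^k\big(X,\cEnd(\belE)^{\otimes k}\otimes\Omega_X^{\otimes k}\big)$ and pushes it forward along the map induced by $P$ (using the invariant pairing $\cEnd(\belE)^{\otimes k}\to\corO_X$ and the wedge $\Omega_X^{\otimes k}\to\Omega_X^k$), obtaining a class in $H^k(X,\Omega_X^k)\subseteq H^{2k}_{DR}(X,\C)$ which, up to a universal constant, equals $\lambda_E(P)$; in particular $c_1(\belE),\dots,c_r(\belE)$ arise this way. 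Since $\mathfrak{a}(\belE)=0$, all these cup-powers vanish, hence $\lambda_E(P)=0$ for every $P$ of positive degree, and therefore $\belR(\belE)=0$.

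As a cross-check, the same conclusion follows from Chern--Weil theory together with Hodge theory, and this is where the compact K\"ahler hypothesis is genuinely used: the curvature $F_\nabla$ of a holomorphic connection is a holomorphic section of $\cEnd(\belE)\otimes\Omega_X^2$, hence of type $(2,0)$, so for $P\in I^k(\GL_r)$ the Chern--Weil representative $P(F_\nabla,\dots,F_\nabla)$ is a closed form of type $(2k,0)$ and $\lambda_E(P)\in H^{2k,0}(X)$; but $\lambda_E(P)$ is a characteristic class, hence — being equally computable from a Hermitian connection — of pure Hodge type $(k,k)$, and for $k\geq1$ the Hodge decomposition of $H^{2k}_{DR}(X,\C)$ forces $\lambda_E(P)=0$. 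I do not expect a real obstacle here: all the content sits in Atiyah's generating theorem (equivalently, in the type argument just sketched, whose only delicate point is the standard connection-independence of the Chern--Weil class). The only thing to keep in mind when writing it up is that ``$\belR(\belE)=0$'' must be read as the vanishing of the positive-degree part, since $H^0(X,\C)=\C\subseteq\belR(\belE)$ always.
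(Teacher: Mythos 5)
Your proposal is correct and follows essentially the same route as the paper: a holomorphic connection splits the Atiyah sequence, so $\mathfrak{a}(\belE)=0$, and Atiyah's theorem that $\mathfrak{a}(\belE)$ generates $\belR(\belE)$ then kills all positive-degree classes. Your Hodge-theoretic cross-check (the $(2k,0)$-type Chern--Weil representative versus the $(k,k)$ Hermitian one) differs only in flavour from the paper's recollection of Atiyah's argument via the Hermitian connection's $(1,1)$ curvature representing $\mathfrak{a}(\belE)$, and is an equally valid way to use the compact K\"ahler hypothesis.
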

Let us recall how one can prove this: fix a Hermitian metric on $\belE$. Let $\nabla$ be the Hermitian connection on $\belE$; by definition it is the unique connection compatible with both the metric and the holomorphic structure of $\belE$. Its curvature $F$ is of type $(1,1)$. The cohomology class $[F] \in H^{1,1}(X, \cEnd(\belE))$ does not depend on the choice of the metric, and via the generalized Dolbeault isomorphism 
$$
H^{1,1}(X, \cEnd(\belE) ) \iso H^1(X, \cEnd(\belE) \otimes \Omega_X) \ ,
$$ 
$[F]$ corresponds to the class $\gota(\belE)$. The theorem then follows since, by definition, $\belR(\belE)$ is generated by $F$.

Let now $\bella$ be a holomorphic Lie algebroid, $\bella_h = \bella^{1,0} \bowtie T^{0,1}_X$, and $\belE$ a holomorphic vector bundle.

Let 
$$
\mathfrak{a}_\bella (\belE) = a^*(\mathfrak{a}(\belE)) \in \Ext^1(\bella, \cEnd(\belE)) \iso H^1(X, \cEnd(\belE)\otimes \Omega_\bella)
$$ 
be the pullback via the anchor $a$ of the Atiyah class of $\belE$. We have the corresponding diagram:
$$ \xymatrix{
0 \ar[r] &  \cEnd(\belE)  \ar[r] \ar[d] & \belJ^1_\bella(\belE) \ar[r]\ar[d] & \bella  \ar[d]\ar[r] & 0\\
0 \ar[r] & \cEnd(\belE) \ar[r] & \belJ^1_X(\belE) \ar[r] & \belT_X \ar[r] & 0 ,
}$$
where $\belJ^1_\bella(\belE)$ is the bundle of $1$-$\bella$-jets of $\belE$ that we are going to define soon in a broader context. For the moment, it can be considered just as the pull-back of $\belJ^1_X(\belE)$ via the anchor.

A $\bella$-connection on $\belE$ is equivalent to a splitting of the upper row, so it exists if and only if $\mathfrak{a}_\bella(\belE) = 0$.
We have the following:
\begin{proposition} \label{atiyah_per_lie_algebroid}
Let $\bella$ be a holomorphic Lie algebroid and $\belE$ a holomorphic vector bundle over a 
compact K\"ahler manifold $X$.

Then the class $\gota_\bella(\belE)$ generates the $\bella_h$-characteristic ring $\belR_{\bella_h}(\belE)$.
\end{proposition}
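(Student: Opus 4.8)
The plan is to follow, \emph{mutatis mutandis}, the argument recalled above for Atiyah's theorem. First I would fix a Hermitian metric on $\belE$, let $\nabla_h$ be its Chern connection and $F$ its curvature, of type $(1,1)$. Let $a_h\colon\bella_h\to T_X\otimes\C$ be the anchor of the matched pair $\bella_h=\bella^{1,0}\bowtie T^{0,1}_X$; being (the anchor, hence) a morphism of complex Lie algebroids, composition with $\id_{\belE}\otimes a_h^*$ turns $\nabla_h$ into a $\bella_h$-connection $\tilde\nabla$ on $\belE$ whose curvature is the pull-back $F_{\tilde\nabla}=a_h^*(F)$. Since $a_h^*$ respects the two bigradings and $F$ is of type $(1,1)$, the form $F_{\tilde\nabla}$ lies in the $(1,1)$-piece $A^1_{\bella^{1,0}}\otimes A^{0,1}_X\otimes\cEnd(\belE)$ of $A^2_{\bella_h}(\cEnd(\belE))$; in particular $\tilde\nabla''=\bar\partial_{\belE}$, so Lemma~\ref{lem:connections} shows that $\tilde\nabla''$ induces the original holomorphic structure and that $F_{\tilde\nabla}$ is $\bar\partial_{\cEnd(\belE)}$-closed.

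The next step is to recognize $F_{\tilde\nabla}$ as a representative of $\gota_\bella(\belE)$: through the generalized Dolbeault isomorphism \eqref{eqn:gen_dolbeault} (with $p=1$ and coefficients in $\cEnd(\belE)$) the class of $F_{\tilde\nabla}$ lives in $H^1(X,\Omega_\bella\otimes\cEnd(\belE))\iso\Ext^1(\bella,\cEnd(\belE))$, and since $[F]$ corresponds to $\gota(\belE)$ --- exactly the computation recalled just after the statement of Atiyah's theorem --- and the Dolbeault resolutions are natural with respect to the pull-back $a_h^*$ along the anchor, this class is $a^*\gota(\belE)=\gota_\bella(\belE)$. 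On the other hand, by the construction of the $\bella_h$-characteristic ring in Section~2 the algebra $\belR_{\bella_h}(\belE)\subseteq H^\bullet(\bella_h,\C)$ is generated by the Chern--Weil classes $[P(F_{\tilde\nabla})]$, $P\in I^\bullet(\GL_r)$ --- one is free to use this particular $\tilde\nabla$, the classes being connection-independent. Each $P(F_{\tilde\nabla})$ is a $\dc_{\bella_h}$-closed $\bella_h$-form of pure bidegree $(k,k)$, $k=\deg P$, which by multiplicativity of the Dolbeault isomorphism projects to $P(\gota_\bella(\belE))\in H^k(X,\Omega_\bella^k)$; hence $\belR_{\bella_h}(\belE)$ is precisely the subalgebra of $H^\bullet(\bella_h,\C)$ generated by the classes $P(\gota_\bella(\belE))$, which is the assertion.

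The one point requiring care is the comparison between $H^k(X,\Omega_\bella^k)=E_1^{k,k}$ and $H^{2k}(\bella_h,\C)\iso H^{2k}_{hol}(\bella)$: a class in $E_1^{k,k}$ passes to $H^{2k}(\bella_h,\C)$ only when it is a permanent cycle of the spectral sequence of Section~\ref{sec:cohomology}, but here the lift is unambiguous because $P(\gota_\bella(\belE))$ is represented by the honestly $\dc_{\bella_h}$-closed form $P(F_{\tilde\nabla})$. Equivalently, and more economically, the whole statement is functoriality: $a_h$ induces a ring homomorphism $a_h^*\colon H^\bullet_{DR}(X,\C)\to H^\bullet(\bella_h,\C)$ compatible with the Hodge filtrations, one has $\belR_{\bella_h}(\belE)=a_h^*(\belR(\belE))$ by the very definition of the $\bella_h$-characteristic ring, and applying the ring map $a_h^*$ to the classical statement ``$\belR(\belE)$ is generated by $\gota(\belE)$'' yields ``$\belR_{\bella_h}(\belE)$ is generated by $a^*\gota(\belE)=\gota_\bella(\belE)$''. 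So the main obstacle is not conceptual but organizational: aligning the several De Rham, Dolbeault and spectral-sequence identifications and checking that the anchor pull-back intertwines them all. No idea beyond Atiyah's classical argument is required.
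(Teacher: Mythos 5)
Your proposal is correct and takes essentially the same route as the paper: it isolates the same key obstacle (in general there is no inclusion $H^{k,k}(\bella_h,\C)\subseteq H^{2k}(\bella_h,\C)$, so Atiyah's argument cannot be quoted verbatim) and resolves it exactly as the paper does, by producing a $\dc_{\bella_h}$-closed $(k,k)$-representative of $P(\gota_\bella(\belE))$ as the anchor pull-back of a $\dc$-closed representative of $P(\gota(\belE))$ on the compact K\"ahler base. Your use of the pulled-back Chern connection merely makes that closed representative explicit, so there is nothing substantive to change.
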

\begin{proof}
By the $\bella$-Dolbeault Theorem with coefficients, we have a commutative diagram
$$
\xymatrix{
H^1(X , \cEnd(\belE)\otimes \Omega_X ) \ar[r] \ar^{\iso}[d] & H^1(X , \cEnd(\belE)\otimes \Omega_\bella )\ar^{\iso}[d] \\
H^{1,1}(X,\cEnd(\belE)) \ar[r] & H^{1,1} (\bella_h , \cEnd(\belE))\ .
}
$$
Consider the images of $\gota(\belE),\gota_\bella(\belE)$ living in the lower row.

In general there are not inclusions of $H^{k,k}(\bella_h,\C)$ in $H^{2k}(\bella_h,\C)$, so Atiyah's argument does not apply straightforwardly. To obtain the asserion, we need, for each invariant polynomial $P$, the existence of a $\dc_{\bella_h}$-closed representative of the class $P(\gota_\bella(\belE)) \in H^{k,k}(\bella_h,\C)$. But $P(\gota_\bella(\belE))$ is the pullback of $P(\gota(\belE))$, and, since $X$ is a smooth projective variety, we can choose a $\dc$-closed representative of $P(\gota(\belE))$, whose pullback is a $\dc_\bella$-closed representative of $P(\gota_\bella(\belE))$.
\end{proof}

\spazio
It is well known that if a coherent $\corO_X$-module $\belE$ admits a $T_X$-connection then it is locally free. This is no more true in the Lie algebroid case: if $\bella$ is a holomorphic Lie algebroid and $\belG$ the associated holomorphic foliation, $\belE$ a coherent $\corO_X$-module and $\nabla$ a holomorphic $\bella$-connection on $\belE$, then we can only say that $\belE_{|G}$ is locally free for any leaf $G$ of $\belG$ (see \cite{fernandes}). Now we want to study which of the previous results generalize to the case when $\belE$ is not locally free.

Let $\bella$ be a holomorphic Lie algebroid. As in \cite{calaque},
we can introduce the sheaf of first $\bella$-jet bundle as follows:
set $\belJ^1_\bella = \Omega_\bella \oplus \corO_X$ with the product 
$(\alpha,f)(\beta,g) = (f\beta + g\alpha , fg)$. Then $\Omega_\bella$ is a sheaf of ideals of $\belJ^1_\bella$, and we have the exact sequence
$$
0 \rightarrow \Omega_\bella \rightarrow \belJ^1_\bella \rightarrow \corO_X \rightarrow 0 \ .
$$

For $\belE$ a coherent $\corO_X$-module, over the sheaf $\belJ^1_\bella(\belE) = \Omega_\bella \otimes \belE \oplus \belE$ we can naturally define a left and right $\belJ^1_\bella$-module structure. These induce a left and a right $\corO_X$-module structures on $\belE$: the right $\corO_X$-module structure on $\belJ^1_\bella(\belE)$ coincide with the $\corO_X$-module structure induced by the direct sum, but the left $\corO_X$-module structure will in general be different. We have the exact sequence
$$
0\rightarrow \Omega_\bella \otimes \belE \rightarrow \belJ^1_\bella(\belE) \rightarrow \belE \rightarrow 0
$$
of both left and right $\corO_X$ and $\belJ^1_\bella$-modules. Remark that when $\bella = \belT_X$ and $\belE$ is locally free, this is equivalent to the sequence \eqref{atiyah_seq}. We define the $\bella$-Atiyah class of the coherent $\corO_X$-module $\belE$ as the class of this extension as left $\corO_X$-modules in $\Ext_{\corO_X}^1(\belE,\belE\otimes \Omega_\bella)$. It is zero if and only if there exists a holomorphic $\bella$-connection on $\belE$. We have:
\begin{proposition}
Let $X$ be a compact K\"ahler manifold, and $\bella$ a holomorphic Lie algebroid over it.

Then the $\bella$-Atiyah class of any coherent sheaf $\belE$ generates $\belR_\bella(\belE)$.
\end{proposition}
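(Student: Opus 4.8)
The plan is to follow the same strategy used for locally free $\belE$ in Proposition \ref{atiyah_per_lie_algebroid}, but replacing the short exact sequence \eqref{atiyah_seq} and its pull-back by the jet-bundle sequence
$$
0\rightarrow \Omega_\bella \otimes \belE \rightarrow \belJ^1_\bella(\belE) \rightarrow \belE \rightarrow 0
$$
of left $\corO_X$-modules, whose extension class is by definition $\gota_\bella(\belE)$. First I would recall the functorial behaviour of the Atiyah class: the obvious map of exact sequences obtained from the anchor $a:\bella\to\belT_X$ shows that $\gota_\bella(\belE) = a^*(\gota(\belE))$, where now $\gota(\belE)\in\Ext^1_{\corO_X}(\belE,\belE\otimes\Omega_X)$ is the $\belT_X$-Atiyah class of the coherent sheaf $\belE$ (which is no longer an extension by $\cEnd(\belE)$, since $\belE$ need not be locally free, but still controls the existence of a $\belT_X$-connection). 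Consequently $P(\gota_\bella(\belE)) = a^*(P(\gota(\belE)))$ for every invariant polynomial $P$, so it suffices to understand the classical case $\bella=\belT_X$ and then pull back.

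The key point, exactly as in Proposition \ref{atiyah_per_lie_algebroid}, is that the $\bella_h$-characteristic ring is computed by $\dc_{\bella_h}$-closed representatives pulled back from $X$, and that on $X$ one may choose $\dc$-closed representatives of $P(\gota(\belE))$. For this I would invoke the theory of Atiyah classes of coherent sheaves (as in \cite{calaque}): for a coherent sheaf $\belE$ on a smooth projective variety, $P(\gota(\belE))\in H^k(X,\Omega_X^k)$ still agrees, under the Dolbeault isomorphism, with the degree-$k$ component of the Chern character (up to the usual normalisation), hence is a Hodge class represented by a $\dc$-closed form; equivalently, one resolves $\belE$ by a bounded complex of locally free sheaves $\belE^\bullet$, applies the locally free result component-wise, and uses that the Atiyah class and the Chern–Weil classes are additive in distinguished triangles. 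Pulling such a $\dc$-closed representative back via $a$ gives a $\dc_\bella$-closed, hence $\dc_{\bella_h}$-closed (via the generalized Dolbeault isomorphism \eqref{eqn:gen_dolbeault} applied to $\belE\otimes\cEnd$-type coefficients, or rather directly to the trace components), representative of $P(\gota_\bella(\belE))$. Since $\belR_\bella(\belE) = a^*(\belR(\belE))$ and $\belR(\belE)$ is by definition the ring generated by the $P(\gota(\belE))$, this yields $\belR_\bella(\belE)\subseteq$ the subring generated by $\gota_\bella(\belE)$; the reverse inclusion is immediate from $\gota_\bella(\belE)=a^*\gota(\belE)$ and the fact that $\gota(\belE)$ itself lies in $\belR(\belE)$ (it represents $c_1$, or more precisely the full Atiyah class generates $\belR(\belE)$ by the coherent-sheaf version of Atiyah's theorem).

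I expect the main obstacle to be the technical statement that for a merely coherent (or torsion-free) sheaf $\belE$ the classes $P(\gota(\belE))$ are still representable by $\dc$-closed forms on the compact Kähler manifold $X$ — i.e. that they are honest Hodge classes. For locally free sheaves this is the classical Chern–Weil argument recalled after the Atiyah-class theorem in the excerpt; in the coherent case one needs either (i) a global locally free resolution, available since $X$ is smooth projective, together with the compatibility of Atiyah classes with mapping cones (this is precisely what is set up in \cite{calaque}), or (ii) a direct Hodge-theoretic argument that the Atiyah–Chern classes of a coherent sheaf are of type $(k,k)$. Once this representability is in place, the pull-back argument and the identification with $\belR_\bella(\belE)$ are formal, proceeding verbatim as in the proof of Proposition \ref{atiyah_per_lie_algebroid}. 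I would therefore structure the write-up as: (1) $\gota_\bella(\belE)=a^*\gota(\belE)$ and additivity along resolutions; (2) $\dc$-closed representatives of $P(\gota(\belE))$ on $X$; (3) pull back and conclude.
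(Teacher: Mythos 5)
Your proposal follows essentially the same route as the paper: identify the $\bella$-Atiyah class of a coherent sheaf with the pullback via the anchor of the usual Atiyah class, invoke the known case $\bella=\belT_X$ for coherent sheaves (the paper cites \cite{huybrechts_lehn}, Chapter 10, where you instead sketch the resolution/Hodge-class argument), and then conclude by the closed-representative pullback argument of Proposition \ref{atiyah_per_lie_algebroid}. The extra detail you supply on representing $P(\gota(\belE))$ by $\dc$-closed forms in the coherent case is exactly what the paper delegates to the reference, so the two proofs coincide in substance.
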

\begin{proof}
One can check that when $\bella = \belT_X$ this definition coincides with the usual definition of Atiyah class of a coherent sheaf (see for example \cite{huybrechts_lehn}, chapter 10); in particular, the proposition is well known in the case $\bella = \belT_X$. Then one can conclude following the same arguments as in Proposition 
\ref{atiyah_per_lie_algebroid}.
\end{proof}

We now want to prove the following: 
\begin{proposition} \label{cor:vanishing_characteristic_ring}
Let $\bella$ be a holomorphic Lie algebroid over a compact K\"ahler manifold $X$, and $\belE$ a torsion free $\corO_X$-module.

If $\nabla$ is a holomorphic $\bella$-connection on $\belE$, then $[\trace (F_\nabla)] = 0$.
\end{proposition}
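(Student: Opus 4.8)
The plan is to reduce the statement to the case of a line bundle --- the determinant of $\belE$ --- and then apply Proposition \ref{atiyah_per_lie_algebroid}. Since $X$ is smooth and $\belE$ is torsion free, $\belE$ is locally free on the complement $U = X\setminus Z$ of a closed analytic subset $Z$ of codimension at least $2$ (a finitely generated torsion-free module over a discrete valuation ring is free). On $U$ the trace $\trace(F_\nabla|_U) \in H^0(U,\Omega_\bella^2)$ is defined and is $\dc_\bella$-closed by the Bianchi identity $\nabla F_\nabla = 0$; since $\Omega_\bella^2$ is locally free and $Z$ has codimension at least $2$, it extends uniquely to a $\dc_\bella$-closed form $\trace F_\nabla \in H^0(X,\Omega_\bella^2)$, and it is the class of this form in $H^2_{hol}(\bella) \iso H^2(\bella_h,\C)$ that we must show is zero.

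Let $r$ be the rank of $\belE$ and $L = (\bigwedge^r\belE)^{**}$, a holomorphic line bundle on $X$ with $L|_U = \bigwedge^r(\belE|_U)$. The $\bella$-connection $\nabla|_U$ induces a holomorphic $\bella$-connection $\det\nabla$ on $L|_U$ whose curvature is $\trace(F_\nabla|_U)$. I claim $\det\nabla$ extends to a holomorphic $\bella$-connection $\nabla_L$ on $L$ over all of $X$: on a polydisc $D$ trivialising $L$ it has the form $\dc_\bella + \theta$ with $\theta$ a holomorphic $1$-$\bella$-form on $D\setminus Z$, which extends across $Z$ by the second Riemann extension theorem applied to the components of $\theta$ in a local frame of the locally free sheaf $\Omega_\bella^1$; since two such local extensions differ by a holomorphic $1$-$\bella$-form vanishing on the dense open set $D\setminus Z$, hence coincide, the local extensions are canonical and glue. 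In particular $\gota_\bella(L) = 0$, and, $\Omega_\bella^2$ being locally free with $Z$ of codimension at least $2$, the equality $F_{\nabla_L}|_U = \trace(F_\nabla|_U)$ propagates to $F_{\nabla_L} = \trace F_\nabla$ in $H^0(X,\Omega_\bella^2)$.

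It remains to show $[F_{\nabla_L}] = 0$. Since $\gota_\bella(L) = 0$, Proposition \ref{atiyah_per_lie_algebroid} forces the subring $\belR_{\bella_h}(L) = a^*\belR(L)$ of $H^\bullet(\bella_h,\C)$ to be trivial in positive degrees, so every $\bella_h$-characteristic class of $L$ vanishes. Now extend $\nabla_L$ to a smooth $\bella_h$-connection on $L$ by taking its $(0,1)$-part to be $\bar{\partial}_L$: by Lemma \ref{lem:connections}(2), with $F^{0,2} = 0$ automatic, its curvature is $F^{2,0} = F_{\nabla_L}$, a closed $(2,0)$-form representing the image of the generator $\trace \in I^1(\GL(1,\C))$ under $\lambda^{\bella_h}_L$, hence an element of $\belR_{\bella_h}(L)$. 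Therefore $[F_{\nabla_L}] = 0$ in $H^2(\bella_h,\C) \iso H^2_{hol}(\bella)$, and consequently $[\trace F_\nabla] = 0$.

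The work is all concentrated in the reduction to the line bundle $L = \det\belE$: a torsion-free sheaf carries neither an $\bella_h$-characteristic ring nor a Chern--Weil calculus one could exploit directly, so one must extend the induced connection on $L|_U$ across the locus $Z$ (of codimension at least $2$) and verify that this leaves the closed holomorphic $2$-$\bella$-form $\trace F_\nabla$ unchanged. Once that is in place, Proposition \ref{atiyah_per_lie_algebroid} --- and with it the Hodge-theoretic input behind it --- finishes the argument, and nothing harder than routine bookkeeping is anticipated.
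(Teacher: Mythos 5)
Your proposal is correct and follows essentially the same route as the paper: reduce to the determinant line bundle (the paper uses $\det(\belE)$, you use $(\bigwedge^r\belE)^{**}$), extend the induced $\bella$-connection across the codimension-$\geq 2$ non-locally-free locus by Hartogs, identify its curvature with $\trace(F_\nabla)$, and conclude by the Atiyah-class/Chern--Weil argument of Proposition \ref{atiyah_per_lie_algebroid} together with Lemma \ref{lem:connections}. The only cosmetic difference is that the paper factors the argument through the intermediate statement $[\trace(F_\nabla)] = c_{1,\bella}(\belE)$, whereas you argue the vanishing directly from $\gota_\bella(L)=0$ on the determinant line bundle.
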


If $\belE$ is locally free, this is straightforward, since the class of the trace of the curvature of $\nabla$ is the first $\bella$-Chern class of $\belE$. With the following, we prove that this is true also if $\belE$ is torsion free:

\begin{lemma}
Let $\belE$ be a torsion free $\corO_X$-module and $\nabla$ a holomorphic $\bella$-connection on it.

Then $[\trace (F_\nabla)] = c_{1,\bella} (\belE)$.
\end{lemma}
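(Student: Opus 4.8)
The plan is to reduce to the determinant line bundle, turn the statement into a \v{C}ech computation carried out over the locus where $\belE$ is locally free, and then to extend the relevant \v{C}ech data across the bad locus, exploiting that the latter has codimension at least two.

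First I would use that, since $\belE$ is torsion free on the smooth $X$, it is locally free away from an analytic subset $Z$ with $\mathrm{codim}_X Z\geq 2$; set $U=X\setminus Z$ and let $r=\mathrm{rk}\,\belE$. The line bundle $\det\belE=(\bigwedge^r\belE)^{**}$ satisfies $\det\belE|_U\iso\bigwedge^r(\belE|_U)$ and $c_{1,\bella}(\belE)=c_{1,\bella}(\det\belE)$. Over $U$ the operator $\nabla$ is an honest $\bella$-connection on the vector bundle $\belE|_U$, so it induces a holomorphic $\bella$-connection $\bigwedge^r(\nabla|_U)$ on $\det\belE|_U$ whose curvature is $\trace(F_\nabla)|_U$.

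Then I would fix a sufficiently fine open covering $\gotU=\{U_\alpha\}$ of $X$ as in Section \ref{sec:cohomology}, chosen also to trivialize $\det\belE$, with transition functions $h_{\alpha\beta}\in\corO_X^{*}(U_\alpha\cap U_\beta)$, and write the connection form of $\bigwedge^r(\nabla|_U)$ in the chosen frame over $U_\alpha\cap U$ as $\theta_\alpha\in\Omega^1_\bella(U_\alpha\cap U)$, so that $\theta_\alpha-\theta_\beta=\dc_\bella\log h_{\alpha\beta}$ on $U_\alpha\cap U_\beta\cap U$ and $\dc_\bella\theta_\alpha=\trace(F_\nabla)|_{U_\alpha\cap U}$. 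Because $\Omega^1_\bella$ is locally free and $\mathrm{codim}_X Z\geq 2$, each $\theta_\alpha$ extends uniquely to $\tilde\theta_\alpha\in\Omega^1_\bella(U_\alpha)$; and because $\Omega^1_\bella$ and $\Omega^2_\bella$ are torsion free, the two displayed identities, valid on the dense opens $U_\alpha\cap U_\beta\cap U$ and $U_\alpha\cap U$, persist on $U_\alpha\cap U_\beta$ and on $U_\alpha$. Now, inside the total complex $(T_\bella^\bullet,\delta)$ of Section \ref{sec:cohomology}, both $(0,\ \trace(F_\nabla)|_{U_\alpha})$ and $(\dc_\bella\log h_{\alpha\beta},\ 0)$ are $\delta$-closed elements of $K^{1,1}\oplus K^{2,0}=F^1T_\bella^2$ — for the first one using the Bianchi identity $\dc_\bella\trace(F_\nabla)=0$ — and by the explicit formula for coboundaries in $F^1T_\bella^\bullet$ recalled there, their difference is exactly $\delta(\{\tilde\theta_\alpha\})=((\check\delta\tilde\theta)_{\alpha\beta},\ \dc_\bella\tilde\theta_\alpha)$. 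Hence the two cocycles define the same class in $F^1H^2(\bella,\C)$, and \emph{a fortiori} in $H^2(\bella,\C)$.

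Finally I would identify the two classes: the image of $(0,\trace(F_\nabla)|_{U_\alpha})$ in $H^2(\bella,\C)$ is by construction $[\trace(F_\nabla)]$, whereas $(\dc_\bella\log h_{\alpha\beta},0)$ is the pull-back through the anchor of the standard \v{C}ech--de Rham representative of $c_1(\det\belE)$, so its class is $a^{*}(c_1(\det\belE))=c_{1,\bella}(\det\belE)=c_{1,\bella}(\belE)$, which gives the claim. The step I expect to be the crux is the extension of the local connection forms: this is where torsion-freeness of $\belE$ (forcing $\mathrm{codim}_X Z\geq 2$) and local freeness of the sheaves $\Omega^k_\bella$ are genuinely used, so that the $\theta_\alpha$ — a priori defined only over $U$ — can be made to glue with the globally defined holomorphic $2$-form $\trace(F_\nabla)$ into a \v{C}ech coboundary over all of $X$; once that is in place, everything else is formal or classical.
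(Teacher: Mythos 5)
Your proposal is correct, and its first half is exactly the paper's argument: reduce to $\det\belE$ via $c_{1,\bella}(\belE)=c_{1,\bella}(\det\belE)$ and use torsion-freeness (so the non-locally-free locus $Z$ has codimension $\geq 2$) together with Hartogs to extend the induced data on $\det\belE$ across $Z$ — this is indeed the crux in both versions. Where you diverge is the second half. The paper extends the induced operator $\tilde\nabla=\bigwedge^r\nabla$ itself to a genuine holomorphic $\bella$-connection on the line bundle $\det\belE$ over all of $X$ (extending the holomorphic functions $\langle\sigma,\tilde\nabla_u s\rangle$ by Hartogs), checks $\tilde F=\trace(F_\nabla)$ by the same density argument, and then simply quotes the locally free case to get $c_{1,\bella_h}(\det\belE)=[\tilde F]$. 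You instead extend only the local connection forms $\theta_\alpha$ in trivializing frames and then identify the classes by an explicit computation in the \v{C}ech--Lie algebroid total complex of Section 3.2, exhibiting $(0,\trace(F_\nabla)|_{U_\alpha})-(\dc_\bella\log h_{\alpha\beta},0)$ as the coboundary of $\{\tilde\theta_\alpha\}$ and recognizing $(\dc_\bella\log h_{\alpha\beta},0)$ as the anchor pull-back of the standard representative of $c_1(\det\belE)$. The two are essentially equivalent (extending the $\theta_\alpha$ frame-by-frame amounts to extending $\tilde\nabla$), but your version is more self-contained, since it proves rather than invokes the line-bundle identity, and it moreover locates the class in $F^1H^2(\bella,\C)$; the paper's is shorter by reusing the known locally free statement. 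Minor points you should make explicit if you write it up: choose the covering so that each $h_{\alpha\beta}$ admits a logarithm (the $2\pi i\mathbb{Z}$ ambiguity is killed by $\dc_\bella$), keep track of the usual $2\pi i$ normalization in the representative of $c_1$, and note that the $\dc_\bella$-closedness of $\trace(F_\nabla)$ is automatic from $\trace(F_\nabla)|_{U_\alpha}=\dc_\bella\tilde\theta_\alpha$, so the appeal to a Lie algebroid Bianchi identity is not even needed.
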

\begin{proof}
Let $U\subseteq X$ be the open subset where $\belE$ is locally free. Then since $\belE$ is torsion free, its complement has codimension at least $2$. Consider $\det(\belE)$, the determinant line bundle of $\belE$: on the open $U$ where it is locally free we have
$$
\det(\belE)_{|U} \iso \bigwedge^r \belE_{|U}\ ,
$$
where $r$ is the rank of $\belE$. Since $c_{1,\bella_h}(\belE)$ is the pull back of $c_1(\belE)$ and 
$c_1(\belE) = c_1(\det(\belE))$, we have $c_{1,\bella_h}(\belE) = c_{1,\bella_h}(\det(\belE))$.

Over $U$, $\nabla$ induces a holomorphic $\bella$-connection $\tilde{\nabla}$ on $\det{\belE}$, defined as the $r$th exterior power of $\nabla$. Now, since the complement of $U$ has codimension at least $2$, we can extend $\tilde{\nabla}$ to the whole $X$ by Hartogs lemma: for any $s\in \det(\belE)_{|U}$, $\sigma\in (\det(\belE)_{|U})^*$ and $u\in \bella$, the function
$$
f_{s,\sigma,u} = \langle \sigma, \tilde{\nabla}_u (s) \rangle
$$
is holomorphic on $U$, so it extends uniquely to $X$. So we can define $\tilde{\nabla}_u(s) = f_{s,u} \in \det(\belE)$, since $\sigma \mapsto f_{s,\sigma,u}$ is a $\corO_X$-linear map.

Let $\tilde{F}$ be the curvature of $\tilde{\nabla}$. We claim that $\tilde{F} = \trace(F_\nabla)$:
this is clearly true over $U$, so by the previous extension argument this holds on the whole $X$. 

So we finally have
$$
c_{1,\bella_h}(\belE) = c_{1,\bella_h}(\det(\belE)) = [\tilde{F}] = [\trace (F_\nabla)]\ ,
$$
as desired.
\end{proof}

\subsection{Example: logarithmic connections}

Let $X$ be a smooth projective variety and $D$ an effective normal crossing divisor on it. Consider the sheaf $\Omega_X(\log D)$ of meromorphic $1$-forms with logarithmic poles along $D$. This is the locally free $\corO_X$-module locally generated  by $\frac{\dc x_{1}}{x_{1}}, \ldots, \frac{\dc x_{t}}{x_{t}},  \dc x_{t+1}, \ldots , \dc x_{n}$, where $x_1 ,\ldots ,x_n$ are local coordinates on $X$ such that $D$ has equation $x_{1} \cdots x_{t} =0$ in this coordinates. Remark that we have a natural inclusion $\Omega_X \hookrightarrow \Omega_X(\log D)$, and the quotient is isomorphic to the structure sheaf of $\tilde{D}$, the normalization of $D$. The exterior differential extends naturally to $\dc : \Omega^p_X(\log D) \rightarrow \Omega^{p+1}_X(\log D)$, where $\Omega_X^p(\log D) = \bigwedge^p \Omega_X(\log D)$.

Consider $\belT_X(\log D)$, the dual of $\Omega_X(\log D)$. Dualizing the sequence
$$
0 \rightarrow \Omega_X \rightarrow \Omega_X(\log D) \rightarrow \corO_{\tilde{D}} \rightarrow 0
$$
we obtain an inclusion $\belT_X(\log D) \hookrightarrow \belT_X$. This subsheaf is locally free and closed under the commutator of vector fields, so it inherits a holomorphic Lie algebroid structure. 

The holomorphic Lie algebroid cohomology $H_{hol}^p(\belT_X(\log D))$ is equal, just by definition, to the hypercohomology of the logarithmic deRham complex $\bH^p(X,\Omega^\bullet_X(\log D))$, that is well known to be isomorphic to the deRham cohomology $H^p_{DR}(U,\C)$ of the open $U=X\setminus D$.

\spazio
Starting from important works of Deligne, holomorphic integrable connections with logarithmic poles along a divisor $D$ have been extensively studied, see for example \cite{deligne}, \cite{esnault}. From our point of view, an integrable connection with logarithmic poles along $D$ on a coherent sheaf $\belE$ is just a flat $\belT_X(\log D)$-connection on $\belE$.

In the Appendix B of \cite{esnault}, it is shown how one can compute the Chern classes of a holomorphic vector bundle in terms of a connection with logarithmic poles along $D$. In particular, it is shown:
\begin{theorem} \label{thm:chern_logarithmic}
Let $X$ be a smooth projective variety and $D$ an effective normal crossing divisor on $X$, $\belE$ a coherent $\corO_X$-module and $\nabla$ a $T_X(\log D)$-connection on $\belE$.
Let $D = \sum a_i D_i$ with $D_i$ irreducible, and $[D_i]$ the class of $D_i$ in $H^2(X,\C)$.

Then the $p$-th Chern class of $\belE$ is a $\C$-linear combination of $[D_{i_1}]^{k_1} \cdots [D_{i_t}]^{k_t}$ with $\sum k_\alpha = p$.
\end{theorem}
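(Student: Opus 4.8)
The plan is to combine the characteristic--ring machinery of Section~3 with a residue computation along the components of $D$, in the spirit of Appendix~B of \cite{esnault}. Write $\bella = \belT_X(\log D)$ and let $j: U = X\setminus D \hookrightarrow X$ be the open inclusion. By the logarithmic de Rham isomorphism recalled above, $H^\bullet(\bella,\C)\iso H^\bullet_{DR}(U,\C)$, and under this identification the anchor pullback $a^*: H^\bullet_{DR}(X,\C)\to H^\bullet(\bella,\C)$ is the restriction map $j^*$. Since $\belE$ carries a $\bella$-connection, its $\bella$-Atiyah class $\gota_\bella(\belE)$ vanishes, hence $\belR_\bella(\belE) = a^*(\belR(\belE)) = 0$; in particular $j^* c_p(\belE) = 0$ in $H^{2p}(U,\C)$ for every $p$, so $c_p(\belE)$ lies in $\ker\bigl(j^*: H^{2p}(X,\C)\to H^{2p}(U,\C)\bigr)$.

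For $p=1$ this already suffices. Indeed, from $0\to\Omega_X\to\Omega_X(\log D)\to\corO_{\tilde{D}}\to 0$ one gets a connecting map $H^0(X,\corO_{\tilde{D}}) = \bigoplus_i\C \to H^1(X,\Omega_X)$ sending the generator attached to the component $D_i$ to the class $[D_i]\in H^{1,1}(X)$; hence the kernel of $H^1(X,\Omega_X)\to H^1(X,\Omega_X(\log D))$ is spanned by the $[D_i]$. Applying this to $\det\belE$, whose class $c_1(\det\belE) = c_1(\belE)$ corresponds under the Dolbeault isomorphism $H^{1,1}(X)\iso H^1(X,\Omega_X)$ to the Atiyah class $\gota(\det\belE)$, and which maps to $a^*\gota(\det\belE) = \gota_\bella(\det\belE) = 0$, we get $c_1(\belE)\in\bigoplus_i\C[D_i]$.

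For $p\geq 2$ the kernel of $j^*$ is in general strictly larger than the span of the monomials $[D_{i_1}]^{k_1}\cdots[D_{i_t}]^{k_t}$ (a Gysin pushforward along a component $D_i$ of an arbitrary class need not be a power of $[D_i]$), so the vanishing of $a^*c_p(\belE)$ is no longer enough and the connection must be used more directly. Here I would follow \cite{esnault}: starting from $\nabla$, produce a $C^\infty$ connection on $\belE$ over all of $X$ by damping the logarithmic singularities near $D$, so that its Chern--Weil forms represent $c_p(\belE)$ in $H^{2p}(X,\C)$ while being concentrated near $D$. Their cohomology classes are then evaluated by iterated residues along the closed strata $D_I = \bigcap_{i\in I}D_i$ of the normal crossing divisor. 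The only local invariant that enters is the residue endomorphism $\Gamma_i$ of $\nabla$ along $D_i$, and since each $D_i$ is projective its powers have traces $\trace(\Gamma_i^k)$ that are scalars; combining this with the projection formula $\iota_{I,*}\iota_I^*\beta = \beta\cdot\prod_{i\in I}[D_i]$ for the Gysin maps expresses $c_p(\belE)$ as a universal polynomial in the classes $[D_i]$ with coefficients built from the numbers $\trace(\Gamma_i^k)$, i.e.\ as a $\C$-linear combination of monomials $[D_{i_1}]^{k_1}\cdots[D_{i_t}]^{k_t}$ with $\sum_\alpha k_\alpha = p$.

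The main obstacle is this last step for $p\geq 2$: one must check that the damped Chern--Weil forms genuinely compute the complex Chern classes on all of $X$ (not merely their already known restriction to $U$, where they obviously vanish), and then organize the iterated residue calculation along the nested strata of $D$ with the correct signs and combinatorial multiplicities. The normal crossing hypothesis is precisely what makes local logarithmic frames available and keeps this combinatorics finite; the reason only the classes $[D_i]$ appear, and not arbitrary cohomology pulled back from the $D_i$, is that the residues contribute solely through their traces of powers, which are scalars, introducing no new cohomology of the strata. The coherent (rather than locally free) case is treated in \cite{esnault} along the same lines, and for $c_1$ it also follows from the determinant argument already used in Section~3.
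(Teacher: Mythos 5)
The first thing to note is that the paper does not prove this statement at all: it is quoted as a known result from Appendix~B of \cite{esnault} (``In particular, it is shown:''), so there is no internal proof to compare with, and any blind attempt has to be judged as a self-contained argument. Within that standard, the first part of your proposal is fine: identifying $H^\bullet(\belT_X(\log D),\C)$ with $H^\bullet_{DR}(U,\C)$, noting that the anchor pullback becomes $j^*$, and deducing $j^*c_p(\belE)=0$ from the vanishing of the log-Atiyah class is correct, and your $p=1$ argument via the residue sequence $0\to\Omega_X\to\Omega_X(\log D)\to\corO_{\tilde D}\to 0$ (whose connecting map hits exactly the span of the $[D_i]$) does give $c_1(\belE)\in\bigoplus_i\C[D_i]$, modulo the small point that for merely coherent $\belE$ you must first pass to the torsion-free quotient or use the Hartogs-type extension of Section~3 so that $\det\belE$ actually inherits a logarithmic connection. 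You are also right, and it is a good observation, that for $p\geq 2$ the inclusion $c_p(\belE)\in\ker j^*$ is strictly weaker than the assertion of the theorem.

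The problem is that for $p\geq 2$ your text is a plan rather than a proof. The damped Chern--Weil construction, the verification that the resulting forms represent $c_p(\belE)$ on all of $X$, and the iterated residue evaluation along the strata $D_I$ with the statement that only $\trace(\Gamma_i^k)$ (scalars, since $D_i$ is projective and irreducible) survive, so that everything lands in the subring generated by the $[D_i]$ --- this is precisely the content of Esnault--Viehweg's Appendix~B, and you explicitly defer it (``the main obstacle is this last step''). There are real issues hidden there: the connection is not assumed flat, $\belE$ is only coherent (one has to handle the torsion subsheaf, which is $\nabla$-stable and supported on $D$, and sheaves that are not locally free near $D$), and the combinatorics of the nested strata must be organized so that no cohomology of the $D_i$ other than powers of $[D_i]$ enters. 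So, as written, the proposal establishes the theorem only for $p=1$ and otherwise reduces it to the very reference the paper cites; that is an acceptable reading of how the paper uses the result, but it is a genuine gap if the goal was an independent proof.
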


Now we can easily see that this theorem implies Theorem \ref{cor:vanishing_characteristic_ring} when $\bella = \belT_X(\log D)$: since 
$$
H^p((\belT_X(\log D))_h , \C) \iso H^p_{hol}(\belT_X(\log D)) \iso H^p_{DR}(U,\C)\ ,
$$
the pullbacks of $[D_i]$ to the Lie algebroid cohomology vanish. So if $\belE$ admits a holomorphic connection with logarithmic poles along $D$, its $\belT_X(\log D)$-Chern classes vanish.

\section{Sheaves of filtered algebras}

\subsection{Lie algebroids associated to an almost polynomial filtered algebra}

In this section we want to classify the sheaves of rings $\Lambda$ satisfying the axioms of Simpson's paper \cite{simpson_representation_1}.

Let $X$ be a smooth algebraic variety over $\C$ or a complex manifold. By a \emph{sheaf of filtered algebras on $X$} we shall mean what Simpson calls "almost polynomial sheaf of rings of differential operators", that is a sheaf of rings $\Lambda$ over $X$ with a filtration of subsheaves of abelian subgroups $\Lambda_{(i)} \subseteq \Lambda_{(i+1)} \subseteq \ldots \subseteq \Lambda$ for $i\in \Z_{\geq0}$,  satisfying the following axioms:
\begin{enumerate}
\item $\C_X$, the constant sheaf over $X$ is in the center of $\Lambda$;
\item $\Lambda_0 =\corO_X$, $\Lambda_{(i)}\Lambda_{(j)} \subseteq \Lambda_{(i+j)}$ and $\Lambda = \bigcup \Lambda_{(i)}$; this implies that $\corO_X$ is a sheaf of subrings of $\Lambda$, and each $\Lambda_{(i)}$ carries an $\corO_X$-bimodule structure;
\item the left and right $\corO_X$-module structures on $\Gr_i\Lambda  = \Lambda_{(i)} / \Lambda_{(i-1)}$ coincide;
\item the graded $\corO_X$-modules $\Gr_i \Lambda$ are coherent;
\item the graded algebra $\Gr_*\Lambda$ is isomorphic to the symmetric algebra over the first graded piece $\Gr_1\Lambda$.
\end{enumerate}

By a \emph{splitting} of $\Lambda$ we mean a left $\corO_X$-module morphism $\zeta: \Gr_1\Lambda \rightarrow \Lambda_{(1)}$ that splits the exact sequence
\begin{equation} \label{eqn:ses}
0\rightarrow \corO_X \rightarrow \Lambda_{(1)} \rightarrow \Gr_1\Lambda \rightarrow 0.
\end{equation}

\spazio
Since the graded object of $\Lambda$ is commutative, the top degree part of the commutator of two elements vanishes. So $[\Lambda_{(i)} , \Lambda_{(j)}] \subseteq \Lambda_{(i+j-1)}$, and this allows us to define the following holomorphic Lie algebroid structures on $\Lambda_{(1)}$ and $\Gr_1\Lambda$:
\begin{itemize}
\item for each $x\in \Lambda_{(1)}$, the map $a_\Lambda(x):f \rightarrow [x,f] = xf-fx$ is a derivation of $\corO_X$, yielding a map $\Lambda_{(1)} \rightarrow \belT_X$; 
\item since $a_\Lambda(x+g) = a_\Lambda(x)$ for any $g\in\corO_X$, $a_\Lambda$ factors through the quotient $\Lambda_{(1)} \rightarrow \Gr_1 \Lambda$, and we have another anchor  $a_\text{G}: \Gr_1\Lambda \rightarrow \belT_X$;
\item $\Lambda_{(1)}$, is closed under the commutator $[\cdot,\cdot]$, and together with the anchor $a_\Lambda$ it defines a Lie algebroid structure on $\Lambda_{(1)}$;
\item for each $u,v\in\Gr_1\Lambda$, define 
$$
[u,v]_\text{G} = [x,y] \: \text{mod} \: \corO_X
$$ 
where $x,y \in\Lambda_{(1)}$ are representatives of $u$ and $v$ respectively; one can check that this definition does not depend on the choice of $x$ and $y$ in their classes, and that the thus defined bracket satisfies the Jacobi identity and the Leibniz rule w. r. t. the anchor $a_\text{G}$.
\end{itemize}

We call $(\Gr_1\Lambda,a_\text{G},[\cdot,\cdot]_\text{G})$ the Lie algebroid associated to $\Lambda$.

It follows from the definition that the projection $\Lambda_{(1)}\rightarrow \Gr_1\Lambda$ is a Lie algebroid map, so we can look at \eqref{eqn:ses} as an exact sequence of Lie algebroids, where $\corO_X$ is given the trivial Lie algebroid structure; hence $\Lambda_{(1)}$ is an abelian Lie algebroid extension of $\Gr_1\Lambda$ by $\corO_X$.

Similarly to Sridharan's paper \cite{sridharan}, in order to classify the $\Lambda$'s we proceed as follows: we first classify abelian Lie algebroid extensions of a holomorphic Lie algebroid $\bella$ by $\corO_X$, and then see that such extensions are in a one to one correspondence with the isomorphism classes of pairs $(\Lambda,\Xi)$, where $\Lambda$ is a sheaf of filtered algebras on $X$ and $\Xi: \Gr_\bullet \Lambda \rightarrow \Sym^\bullet_{\corO_X} \bella$ is an isomorphism of sheaves of graded algebras.

\subsection{Lie algebroid extensions}
Let $X$ be a complex manifold, and
\begin{equation} \label{eqn:la_ext}
0\rightarrow \corO_X \rightarrow \bella' \rightarrow \bella \rightarrow 0
\end{equation}
an abelian extension of holomorphic Lie algebroids over it.

Assume that there exists a global splitting $\zeta : \bella \rightarrow \bella'$ of the sequence considered as a sequence of left $\corO_X$-modules. This gives an isomorphism of left $\corO_X$-modules $\hat{\zeta}: \bella' \rightarrow \corO_X \oplus \bella$, so we can write
$$
[f+u,g+v]_{\bella'} = [u,v]_{\bella} + Q(u,v) + a_{\bella}(u)(g) - a_{\bella}(v)(f)
$$
with $Q(u,v) \in \corO_X$. $Q$ is antisymmetric and $\corO_X$-bilinear, so it is a holomorphic $2$-$\bella$-form; moreover, one can check that the Jacobi identity for $[\cdot,\cdot]_{\bella'}$ is satisfied if and only if $\dc_{\bella} Q = 0$.

It is then easy to see that by changing the splitting, we change $Q$ by an exact holomorphic $2$-$\bella$-form. More precisely, we have the following:
\begin{lemma} \label{lem:change_split}
Let $\zeta_1,\zeta_2$ be two global left $\corO_X$-module splittings of 
$$
0 \rightarrow \corO_X \rightarrow \bella' \rightarrow \bella \rightarrow 0\ ,
$$
and let $\psi = \zeta_2-\zeta_1 \in \Hom(\bella, \corO_X) = H^0(X, \Omega_\bella)$. 
Let $Q_1,Q_2$ be the closed holomorphic $2$-$\bella$-forms associated to $\zeta_1$, $\zeta_2$ respectively.

Then $Q_2-Q_1 = \dc_{\bella} \psi$.
\end{lemma}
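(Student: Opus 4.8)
The plan is to compute directly how the bracket expression for $\bella'$ transforms when we pass from the splitting $\zeta_1$ to $\zeta_2$, and read off the change in the associated $2$-form. First I would fix notation: for $i=1,2$ the splitting $\zeta_i$ gives an identification $\bella' \iso \corO_X \oplus \bella$ under which an element is written $f + u$ (with $f\in\corO_X$, $u\in\bella$ \emph{viewed inside $\bella'$ via $\zeta_i$}), and the bracket reads
$$
[f+u, g+v]_{\bella'} = [u,v]_\bella + Q_i(u,v) + a_\bella(u)(g) - a_\bella(v)(f).
$$
The point is that $\zeta_2(u) = \zeta_1(u) + \psi(u)$ with $\psi(u)\in\corO_X$, since $\zeta_2-\zeta_1$ lands in the kernel $\corO_X$ and is $\corO_X$-linear, hence an element of $\Hom_{\corO_X}(\bella,\corO_X) = H^0(X,\Omega_\bella)$.

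Next I would take two sections $u,v$ of $\bella$ and compute $[\zeta_2(u),\zeta_2(v)]_{\bella'}$ in two ways. On one hand, by the defining formula for $Q_2$ applied with $f=g=0$ in the $\zeta_2$-coordinates, this bracket equals $\zeta_2([u,v]_\bella) + Q_2(u,v)$. On the other hand, substituting $\zeta_2(u) = \zeta_1(u)+\psi(u)$ and expanding with the $\zeta_1$-formula (now $f = \psi(u)$, $g = \psi(v)$ are the $\corO_X$-parts),
$$
[\zeta_1(u)+\psi(u), \zeta_1(v)+\psi(v)]_{\bella'} = \zeta_1([u,v]_\bella) + Q_1(u,v) + a_\bella(u)(\psi(v)) - a_\bella(v)(\psi(u)).
$$
Comparing the two, and using $\zeta_2([u,v]_\bella) = \zeta_1([u,v]_\bella) + \psi([u,v]_\bella)$, the $\zeta_1$-parts of the difference must match and one gets
$$
Q_2(u,v) - Q_1(u,v) = a_\bella(u)(\psi(v)) - a_\bella(v)(\psi(u)) - \psi([u,v]_\bella).
$$
Finally I would recognize the right-hand side as exactly $(\dc_\bella \psi)(u,v)$: indeed, $\psi$ is a holomorphic $1$-$\bella$-form, and the formula \eqref{eqn:differential_la} (with the appropriate holomorphic modifications, as stated after the definition of $\dc_\bella$) gives $(\dc_\bella\psi)(u,v) = a_\bella(u)(\psi(v)) - a_\bella(v)(\psi(u)) - \psi([u,v]_\bella)$. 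Hence $Q_2 - Q_1 = \dc_\bella\psi$, which is the claim.

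I do not expect any serious obstacle here; the statement is essentially a bookkeeping identity. The only mild subtlety is being careful that the symbol ``$u$'' means different elements of $\bella'$ under $\zeta_1$ versus $\zeta_2$, so that the term $\psi([u,v]_\bella)$ coming from $\zeta_2([u,v]_\bella) - \zeta_1([u,v]_\bella)$ is not dropped — it is precisely this term that supplies the $-\psi([u,v]_\bella)$ needed to complete $\dc_\bella\psi$. One should also note that each $Q_i$ is well-defined (antisymmetric, $\corO_X$-bilinear, closed) by the discussion preceding the lemma, so the difference $Q_2-Q_1$ is a priori a closed holomorphic $2$-$\bella$-form and the computation merely identifies it as an exact one.
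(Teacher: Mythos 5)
Your computation is correct: expanding $[\zeta_2(u),\zeta_2(v)]_{\bella'}$ via $\zeta_2=\zeta_1+\psi$ and comparing with the defining formula for $Q_2$ yields exactly $Q_2(u,v)-Q_1(u,v)=a_\bella(u)(\psi(v))-a_\bella(v)(\psi(u))-\psi([u,v]_\bella)=(\dc_\bella\psi)(u,v)$, with the signs matching the paper's formula \eqref{eqn:differential_la}. This is precisely the direct verification the paper leaves implicit (it states the lemma as ``easy to see''), so your argument coincides with the intended one, and your remark about not dropping the $\psi([u,v]_\bella)$ term is the one genuine point of care.
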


Because of this, remembering the notation of Section \ref{sec:cohomology}, since
$$
\frac{H^0(X,\Omega^2_\bella)_{\text{closed}}}{\dc_\bella (H^0(X, \Omega_\bella))} = E^{2,0}_2 \iso F^2H^2(\bella,\C)\ ,
$$
we have
\begin{corollary} \label{cor:global_split}
Let $X$ be a complex manifold and $\bella$ a holomorphic Lie algebroid over it. Then the isomorphism classes of holomorphic Lie algebroid extensions
$$
0\rightarrow \corO_X \rightarrow \bella' \rightarrow \bella \rightarrow 0
$$
which are split as sequences of left $\corO_X$-modules are in a one to one correspondence with the elements of $F^2H^2(\bella,\C)$.
\end{corollary}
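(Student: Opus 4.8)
The plan is to construct a bijection directly, using the description of $F^2H^2(\bella,\C)$ as the quotient $E^{2,0}_2 = H^0(X,\Omega^2_\bella)_{\text{closed}}/\dc_\bella H^0(X,\Omega_\bella)$ recalled from Section \ref{sec:cohomology}. First I would set up the map from extensions to cohomology classes. Given an abelian extension \eqref{eqn:la_ext} which splits as a sequence of left $\corO_X$-modules, choose a global left $\corO_X$-module splitting $\zeta:\bella\to\bella'$. The computation preceding Lemma \ref{lem:change_split} produces a holomorphic $2$-$\bella$-form $Q$, and the Jacobi identity for the bracket on $\bella'$ forces $\dc_\bella Q = 0$, so $Q$ defines a class $[Q]\in H^0(X,\Omega^2_\bella)_{\text{closed}}/\dc_\bella H^0(X,\Omega_\bella) \iso F^2H^2(\bella,\C)$. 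Lemma \ref{lem:change_split} shows that changing the splitting changes $Q$ by $\dc_\bella\psi$, so $[Q]$ is independent of $\zeta$ and depends only on the isomorphism class of the extension; I would also remark that an isomorphism of extensions, being a left $\corO_X$-module isomorphism compatible with the maps to $\corO_X$ and $\bella$, carries a splitting of one to a splitting of the other inducing the same $Q$, so the assignment $(\bella'\text{-extension}) \mapsto [Q]$ is well defined on isomorphism classes.

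Next I would build the inverse. Given a closed $Q\in H^0(X,\Omega^2_\bella)$, define $\bella'_Q = \corO_X\oplus\bella$ as a left $\corO_X$-module, with anchor $a'(f+u) = a_\bella(u)$ and bracket
\[
[f+u,g+v]_{\bella'_Q} = [u,v]_\bella + Q(u,v) + a_\bella(u)(g) - a_\bella(v)(f).
\]
One checks antisymmetry and the Leibniz rule are automatic from $\corO_X$-bilinearity and antisymmetry of $Q$ together with the Leibniz rule on $\bella$, while the Jacobi identity is equivalent to $\dc_\bella Q = 0$ — exactly the verification already indicated in the text. This produces an abelian holomorphic Lie algebroid extension $0\to\corO_X\to\bella'_Q\to\bella\to 0$, split as left $\corO_X$-modules by the obvious inclusion, whose associated form is $Q$ by construction. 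Thus the first map is surjective.

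For injectivity of the first map (equivalently well-definedness of $Q\mapsto\bella'_Q$ on cohomology classes), suppose two extensions $\bella'_1,\bella'_2$ give cohomologous forms $Q_1, Q_2$ with $Q_2 - Q_1 = \dc_\bella\psi$ for some $\psi\in H^0(X,\Omega_\bella)$. Fixing left $\corO_X$-module splittings identifying both $\bella'_i$ with $\corO_X\oplus\bella$, I would exhibit the left $\corO_X$-module isomorphism $\Phi:\corO_X\oplus\bella\to\corO_X\oplus\bella$, $\Phi(f+u) = (f + \langle\psi,u\rangle) + u$, and check it intertwines the two brackets: the discrepancy between the two brackets is precisely the term $(Q_2-Q_1)(u,v) = a_\bella(u)(\langle\psi,v\rangle) - a_\bella(v)(\langle\psi,u\rangle) - \langle\psi,[u,v]_\bella\rangle = (\dc_\bella\psi)(u,v)$, which is exactly what $\Phi$ corrects. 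Hence $\bella'_1\iso\bella'_2$ as extensions. Combining, the assignment descends to a bijection between isomorphism classes of left-$\corO_X$-split extensions and $F^2H^2(\bella,\C)$.

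The main obstacle is bookkeeping rather than conceptual: the delicate point is to confirm that the equivalence relations match on both sides — that "isomorphic as extensions" corresponds exactly to "cohomologous forms", with no coarser or finer identification — and in particular to make sure that the only ambiguity in recovering $Q$ from an extension is the choice of left $\corO_X$-splitting (handled by Lemma \ref{lem:change_split}) and that extension isomorphisms induce no further identification beyond $\dc_\bella$-exactness. All of this reduces to the single bracket-compatibility computation for $\Phi$ above, together with the already-stated fact that Jacobi $\Leftrightarrow$ $\dc_\bella Q = 0$; once those are in hand the correspondence is immediate from the identification $E^{2,0}_2\iso F^2H^2(\bella,\C)$ used in the statement.
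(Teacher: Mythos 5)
Your proposal is correct and follows essentially the same route as the paper: a global left $\corO_X$-splitting produces a closed $Q$, Lemma \ref{lem:change_split} handles the change of splitting, and the identification $E^{2,0}_2\iso F^2H^2(\bella,\C)$ gives the statement. The paper leaves the inverse construction $Q\mapsto\bella'_Q$ and the isomorphism $\Phi$ for cohomologous forms implicit, and you have simply spelled these verifications out.
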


\spazio
Now we examine what happens if the extension \eqref{eqn:la_ext} does not split as a sequence of $\corO_X$-modules: let $\Phi \in \Ext^1(\bella, \corO_X) \iso H^1(X,\Omega_\bella)$ be the associated cohomology class. For a sufficiently nice open covering $\gotU= \{U_\alpha\}$ of $X$ we can represent $\Phi$ by a closed $1$-$\check{\text{C}}$ech-cocycle $\phi_{\alpha\beta}$, and choose local splittings 
$$
\zeta_\alpha : \bella_{|U_\alpha} \rightarrow \bella'_{|U_\alpha}
$$
satisfying $\zeta_\beta - \zeta_\alpha = \phi_{\alpha\beta}$. Then we can do the previous construction on each $U_\alpha$ and obtain a closed holomorphic $2$-$\bella$-form $Q_\alpha \in H^0(U_\alpha, \Omega^2_\bella)$. Because of Lemma \ref{lem:change_split}, these satisfy $Q_\beta - Q_\alpha = \dc_\bella \phi_{\alpha\beta}$ on the double overlaps $U_{\alpha\beta}$. So the pair $(Q_\alpha,\phi_{\alpha\beta}) $ is a closed element of $F^1T_\bella^2$.

Now let $\phi'_{\alpha\beta}$ be another representative of $\Phi$; then $\phi'_{\alpha\beta} - \phi_{\alpha\beta} = (\check{\delta}\eta)_{\alpha\beta}$ for some $\eta \in \check{C}^1(\gotU, \Omega_\bella^1)$. Let $\zeta'_\alpha$ be local splittings over $U_\alpha$ satisfying $\zeta'_\beta - \zeta'_\alpha = \phi'_{\alpha\beta} = \phi_{\alpha\beta} + (\check{\delta}\eta)_{\alpha\beta}$, and $Q'_\alpha \in H^0(U_\alpha, \Omega^2_\bella)$ the closed $2$-$\bella$-forms associated to the splittings $\zeta'_\alpha$. Then $\check{\delta}(Q'-Q)_{\alpha\beta} = \dc_\bella (\check{\delta}\eta)_{\alpha\beta}$, which means, since $\check{\delta}$ and $\dc_\bella$ commute, that the local $2$-$\bella$-forms $Q'_\alpha- Q_\alpha - \dc_\bella \eta_\alpha$ glue to a global $2$-$\bella$-form $G$. So
$$
(Q'_\alpha, \phi'_{\alpha\beta}) - (Q_\alpha, \phi_{\alpha\beta}) = \delta (\eta_\alpha + G_{|U_{\alpha}}),
$$
Hence the cohomology class of $(Q_\alpha,\phi_{\alpha\beta})$ in $H^2(F^1T^\bullet)$ is independent of the choices we made. 

Now, if $X$ is a compact K\"ahler manifold, by Corollary \ref{cor:filtration_cohomology} we can identify $H^2(F^1T^\bullet)$ with $F^1H^2(\bella;\C)$, and we obtain:
\begin{theorem}
Let $\bella$ be a holomorphic Lie algebroid over a compact K\"ahler manifold $X$.

Then the isomorphism classes of abelian extensions of $\bella$ by $\corO_X$ are in a one to one correspondence with the elements of the cohomology group $F^1H^2(\bella,\C)$.
\end{theorem}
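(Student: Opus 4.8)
The plan is to construct the correspondence in both directions and check they are mutually inverse, piggy-backing on the local analysis already carried out in Section~4.2. Given an abelian extension \eqref{eqn:la_ext} of $\bella$ by $\corO_X$, the preceding discussion already produces a well-defined class in $H^2(F^1T_\bella^\bullet)$, namely the class of the cocycle $(Q_\alpha,\phi_{\alpha\beta})$ built from local left-$\corO_X$-splittings $\zeta_\alpha$ with $\zeta_\beta-\zeta_\alpha=\phi_{\alpha\beta}$ a \v{C}ech representative of $\Phi=[\text{extension}]\in\Ext^1(\bella,\corO_X)\iso H^1(X,\Omega_\bella)$. By Corollary~\ref{cor:filtration_cohomology}, over a compact K\"ahler manifold this is the same as a class in $F^1H^2(\bella,\C)$. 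Isomorphic extensions (in the category of Lie algebroid extensions of $\bella$ by $\corO_X$, i.e.\ commuting with the maps to $\bella$ and from $\corO_X$) produce cohomologous cocycles: an isomorphism $\bella'_1\to\bella'_2$ carries a splitting of one to a splitting of the other, changing $\phi_{\alpha\beta}$ by a coboundary and $Q_\alpha$ accordingly, so by the computation preceding the theorem the class is unchanged. This gives a well-defined map from isomorphism classes of extensions to $F^1H^2(\bella,\C)$.

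For the inverse, start from a class in $H^2(F^1T_\bella^\bullet)$ represented by a closed cocycle $(Q_\alpha,\phi_{\alpha\beta})\in K^{2,0}\oplus K^{1,1}$ satisfying \eqref{eqn:differential}. On each $U_\alpha$ form the trivial extension $\bella'_\alpha=\corO_{U_\alpha}\oplus\bella_{|U_\alpha}$ with bracket twisted by $Q_\alpha$, exactly as in the local model of Section~4.2; the condition $\dc_\bella Q_\alpha=0$ guarantees Jacobi. On overlaps glue $\bella'_\alpha$ to $\bella'_\beta$ by the left-$\corO_X$-module isomorphism $\mathrm{id}+\phi_{\alpha\beta}$ (sending $(f,u)\mapsto(f+\langle\phi_{\alpha\beta},u\rangle,u)$); the cocycle condition $\check{\delta}\phi=0$ makes these transition isomorphisms compose correctly, and the relation $Q_\beta-Q_\alpha=\dc_\bella\phi_{\alpha\beta}$ is precisely what is needed for $\mathrm{id}+\phi_{\alpha\beta}$ to be a Lie algebroid isomorphism between the $Q_\alpha$- and $Q_\beta$-twisted brackets (this is the content of Lemma~\ref{lem:change_split} read backwards). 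So one obtains a genuine holomorphic Lie algebroid $\bella'$ sitting in an abelian extension of $\bella$ by $\corO_X$. One then checks that cohomologous cocycles give isomorphic extensions: a coboundary $(Q',\phi')-(Q,\phi)=\delta\eta$ with $\eta\in K^{1,0}$ yields, on each $U_\alpha$, the left-$\corO_X$-module isomorphism $\mathrm{id}+\eta_\alpha$ intertwining the two local brackets and compatible with the two gluings, hence a global isomorphism of extensions.

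Finally one verifies the two constructions are mutually inverse. Starting from an extension, choosing local splittings, building $(Q_\alpha,\phi_{\alpha\beta})$, and reconstructing gives back $\bella'$ up to canonical isomorphism, because the local splittings provide exactly the local trivializations $\bella'_{|U_\alpha}\iso\corO_{U_\alpha}\oplus\bella_{|U_\alpha}$ identifying the reconstructed object with the original; conversely, reconstructing from a cocycle and then reading off its class recovers the same cohomology class, since the tautological local splittings of the reconstructed extension are the identity inclusions, reproducing $(Q_\alpha,\phi_{\alpha\beta})$ on the nose.

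\textbf{Main obstacle.} The routine but delicate point is bookkeeping the two $\corO_X$-module structures and the sign conventions so that $\mathrm{id}+\phi_{\alpha\beta}$ really is a morphism of Lie algebroids (not merely of $\corO_X$-modules): one must confront the twisted bracket formula
$$
[f+u,g+v]_{\bella'}=[u,v]_\bella+Q(u,v)+a_\bella(u)(g)-a_\bella(v)(f)
$$
with the effect of conjugating by $\mathrm{id}+\phi_{\alpha\beta}$ and see that the discrepancy is exactly $\dc_\bella\phi_{\alpha\beta}$ evaluated on $u,v$. This is the same identity appearing in Lemma~\ref{lem:change_split}, so no new idea is needed, but it is where the proof must be careful. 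The only genuinely non-formal input is the identification $H^2(F^1T_\bella^\bullet)\iso F^1H^2(\bella,\C)$, which is Corollary~\ref{cor:filtration_cohomology} and rests on the K\"ahler hypothesis via the partial degeneration lemma; everything else is gluing and cocycle manipulation.
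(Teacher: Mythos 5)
Your proposal is correct and follows essentially the same route as the paper: local left-$\corO_X$-splittings producing the cocycle $(Q_\alpha,\phi_{\alpha\beta})$, Lemma~\ref{lem:change_split} controlling the change of splitting, well-definedness of the class in $H^2(F^1T_\bella^\bullet)$, and Corollary~\ref{cor:filtration_cohomology} supplying the K\"ahler identification with $F^1H^2(\bella,\C)$. The only difference is that you spell out explicitly the inverse gluing of $Q_\alpha$-twisted trivial extensions via $\mathrm{id}+\phi_{\alpha\beta}$ and the mutual-inverse check, which the paper leaves implicit; this is a faithful completion rather than a different argument.
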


In this interpretation, the map $F^1H^2(\bella,\C) \rightarrow H^1(X,\Omega_\bella)$ associates to an extension of Lie algebroids its class as an extension of $\corO_X$-modules, while Corollary \ref{cor:global_split} describes the fiber of this map over $0$.

\subsection{From extensions to algebras}

In this section, we will modify Sridharan's construction of twisted enveloping algebras to the case of Lie algebroids, and see that the datum of an abelian extension of a holomorphic Lie algebroid $\bella$ by $\corO_X$ is equivalent to a pair $(\Lambda,\Xi)$ with $\Lambda$ a sheaf of filtered algebras over $X$ and $\Xi:\Gr \Lambda \rightarrow \Sym^\bullet \bella$ an isomorphism of sheaves of graded algebras.

Let 
$$
0 \rightarrow \corO_X \rightarrow \bella' \rightarrow \bella \rightarrow 0
$$
be an abelian extension of holomorphic Lie algebroids and $\sigma = (Q_\alpha, \phi_{\alpha\beta})$ a representative of the class $\Sigma \in F^1H^2(\bella,\C)$ associated to this extension.

For each $\alpha$ define the following sheaf of algebras over $U_\alpha$:
$$
\belU_\sigma(\bella)_\alpha = T_{\C_{U_\alpha}}^\bullet(\corO_{U_\alpha} \oplus \bella_{|U_\alpha})/I_{Q_\alpha}
$$
where $T_{\C_{U_\alpha}}^\bullet$ denotes the full $\C_{U_\alpha}$-tensor algebra, and $I_{Q_\alpha}$ is the ideal sheaf generated by the elements of the form 
$$
f \otimes (g + u) - fg + fu \qquad \text{for}\ \  f,g\in \corO_{U_\alpha},\ \ u \in \bella_{|U_\alpha} 
$$ $$
(f+u)\otimes (g+v) - (g+v)\otimes (f+u) - [f+u,g+v]' - Q_\alpha(u,v)\id \quad \text{for}\ \  u,v \in \bella_{|U_\alpha}\ ,
$$ 
where $[\cdot,\cdot]'$ denotes the bracket on $\corO_X \oplus \bella$ corresponding to the trivial extension, that is
$$
[f+u,g+v] = a(u)(g) - a(v)(f) + [u,v]\quad \text{for} \ f,g\in \corO_X,\ u,v\in\bella\ .
$$

Now, on double overlaps $U_{\alpha\beta}$ define maps
$$
T_{\C_{U_{\alpha\beta}}}^\bullet(\corO_X \oplus \bella_{|U_{\alpha\beta}}) \rightarrow T_{\C_{U_{\alpha\beta}}}^\bullet(\corO_X \oplus \bella_{|U_{\alpha\beta}})
$$ 
by 
$$
f + u \rightarrow f + \phi_{\alpha\beta}(u) + u \qquad  f\in \corO_{U_{\alpha\beta}},\  u\in\bella_{|U_{\alpha\beta}}\ .
$$
One can check that this map descends to an isomorphism of sheaves of algebras
$$
g_{\alpha\beta}:(\belU_\sigma(\bella)_\alpha)_{|U_{\alpha\beta}}\rightarrow (\belU_\sigma(\bella)_\beta)_{|U_{\alpha\beta}} .
$$
Since $\phi_{\alpha\beta}$ is $\check{\delta}$-closed, we have $g_{\alpha\beta}g_{\beta\gamma}g_{\gamma\alpha} = \id$ on the triple intersections, so we can glue the local sheaves $\belU_\sigma(\bella)_\alpha$ via the isomorphisms $g_{\alpha\beta}$, and obtain a sheaf of algebras $\belU_\sigma(\bella)$ on $X$.

Since the tensor algebra is naturally graded, $\belU_\sigma(\bella)$ inherits a filtration. Moreover, since the ideals $I_{Q_\alpha}$ are generated by elements of the form "commutator" + "lower degree terms", we have that $\Gr \belU_\sigma(\bella) \iso \Sym^\bullet \bella$. Using the injections $\bella_{|U_\alpha} \rightarrow T_{\corO_{U_\alpha}}^\bullet \bella_{|U_\alpha}$, one can construct one such isomorphism explicitly, that we shall denote $\Xi_\sigma$.

Finally, if $\sigma'$ is another representative of $\Sigma$ and $\sigma'-\sigma = \delta(\eta)$, it is possible to construct from $\eta$ an isomorphism of  $\belU_{\sigma'}(\bella) \rightarrow \belU_\sigma(\bella)$ commuting with the isomorphisms $\Xi_{\sigma'}$ and $\Xi_\sigma$.

Summing up, we have
\begin{theorem}
Let $X$ be a compact K\"ahler manifold and $\bella$ a holomorphic Lie algebroid on it.

Then there is a 1-to-1 correspondence between
\begin{itemize}
\item abelian extensions of $\bella$ by $\corO_X$;
\item elements of the vector space $F^1H^2(\bella,\C)$;
\item isomorphism classes of pairs $(\Lambda,\Xi)$, where $\Lambda$ is a sheaf of filtered algebras on $X$ whose associated Lie algebroid is $\bella$, and $\Xi: \Gr \Lambda \rightarrow \Sym^\bullet \bella$ an isomorphism of sheaves of graded algebras.
\end{itemize}
\end{theorem}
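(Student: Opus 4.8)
The plan is to assemble the three equivalences from the pieces developed in the previous subsections, so that the proof is essentially a bookkeeping argument that the constructions are mutually inverse. The first equivalence, between abelian extensions of $\bella$ by $\corO_X$ and elements of $F^1H^2(\bella,\C)$, has already been established in the theorem of the previous subsection (using Corollary \ref{cor:filtration_cohomology} to identify $H^2(F^1T_\bella^\bullet)$ with $F^1H^2(\bella,\C)$ over a compact K\"ahler manifold). So what remains is to set up the bijection between abelian extensions (equivalently, classes $\Sigma$ together with \v{C}ech-type representatives $\sigma = (Q_\alpha,\phi_{\alpha\beta})$) and isomorphism classes of pairs $(\Lambda,\Xi)$, and to show the two directions compose to the identity.

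In the forward direction I would take the construction $\sigma \rightsquigarrow \belU_\sigma(\bella)$ described just above: form the local algebras $\belU_\sigma(\bella)_\alpha = T_{\C_{U_\alpha}}^\bullet(\corO_{U_\alpha}\oplus\bella_{|U_\alpha})/I_{Q_\alpha}$, glue them via the isomorphisms $g_{\alpha\beta}$ coming from $\phi_{\alpha\beta}$ (cocycle condition from $\check\delta$-closedness of $\phi$), equip the result with the tensor-degree filtration, and observe that because each $I_{Q_\alpha}$ is generated by "commutator plus lower order" relations, the associated graded is canonically $\Sym^\bullet\bella$, furnishing $\Xi_\sigma$. One must check the five Simpson axioms for $\Lambda = \belU_\sigma(\bella)$ --- but axioms (1)--(4) are immediate from the construction (the constant sheaf is central by construction, $\Lambda_{(0)}=\corO_X$, the bimodule structures on $\Gr_i$ agree because the relations force $f\otimes u - u\otimes f$ into lower degree, and coherence follows from $\Gr\Lambda\iso\Sym^\bullet\bella$ with $\bella$ locally free), and axiom (5) is exactly the statement $\Gr\Lambda\iso\Sym^\bullet\Gr_1\Lambda$ that $\Xi_\sigma$ provides. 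Independence of the choice of representative $\sigma$ within $\Sigma$ is handled by the isomorphism $\belU_{\sigma'}(\bella)\to\belU_\sigma(\bella)$ built from $\eta$ when $\sigma'-\sigma=\delta\eta$, compatible with the $\Xi$'s, so the pair $(\Lambda,\Xi)$ is well defined up to isomorphism by $\Sigma$ alone.

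In the reverse direction, starting from a pair $(\Lambda,\Xi)$ I would use the Lie-algebroid-from-$\Lambda$ construction of the first subsection: $\Xi$ identifies $\Gr_1\Lambda$ with $\bella$, so $\Lambda_{(1)}$ is an abelian Lie algebroid extension $0\to\corO_X\to\Lambda_{(1)}\to\bella\to 0$, giving back a class $\Sigma\in F^1H^2(\bella,\C)$; concretely one chooses local left-$\corO_X$-splittings $\zeta_\alpha$ of \eqref{eqn:ses}, extracts the \v{C}ech cocycle $\phi_{\alpha\beta}=\zeta_\beta-\zeta_\alpha$ and the local $2$-$\bella$-forms $Q_\alpha$ from the bracket, exactly as in the extensions subsection. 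The content of the theorem is then that these two assignments are inverse. One composition --- $(\Lambda,\Xi)\mapsto\Sigma\mapsto(\belU_\sigma(\bella),\Xi_\sigma)$ --- requires producing, for a choice of splittings $\zeta_\alpha$ of $\Lambda_{(1)}$, a filtered-algebra isomorphism $\belU_\sigma(\bella)\xrightarrow{\sim}\Lambda$ intertwining $\Xi_\sigma$ and $\Xi$; by the universal property of the local tensor algebras, the maps $f+u\mapsto f + \zeta_\alpha(u)$ extend to surjections $\belU_\sigma(\bella)_\alpha\to\Lambda_{|U_\alpha}$ killing $I_{Q_\alpha}$, they are compatible with the gluing because the $\phi_{\alpha\beta}$ match, and they are isomorphisms because on associated graded they are $\Xi_\sigma\circ\Xi^{-1}$, hence iso, and a filtered morphism inducing an isomorphism on $\Gr$ of a bounded-below exhaustive filtration is an isomorphism. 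The other composition is the already-noted independence statement.

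The main obstacle I expect is not any single hard theorem but the care needed in the reverse-to-forward composition: one has to verify that the generators of $I_{Q_\alpha}$ really do map to zero in $\Lambda$ under $f+u\mapsto f+\zeta_\alpha(u)$, which amounts to checking that the bracket relation in $\Lambda_{(1)}$ is precisely $[x,y]_\Lambda = [u,v]_\bella + Q_\alpha(u,v) + (\text{anchor terms})$ with the \emph{same} $Q_\alpha$ that enters the definition of $\belU_\sigma$ --- i.e.\ that the two appearances of "$Q_\alpha$" (one from the extension class of $\Lambda_{(1)}$, one feeding the twisted enveloping algebra) are literally the same cocycle, not merely cohomologous; this is true essentially by how $\sigma$ was extracted, but it is the step where a sign or normalization error would hide. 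A secondary subtlety, already flagged implicitly by the compact-K\"ahler hypothesis, is that we use Corollary \ref{cor:filtration_cohomology} to pass between $H^2(F^1T_\bella^\bullet)$ and $F^1H^2(\bella,\C)$, so the identification of extensions with the Hodge filtration piece --- as opposed to the raw total cohomology group --- is genuinely where the K\"ahler assumption is consumed.
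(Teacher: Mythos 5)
Your proposal is correct and follows essentially the same route as the paper: it combines the already-established classification of abelian extensions by $F^1H^2(\bella,\C)$ (via Corollary \ref{cor:filtration_cohomology}, which is indeed where the K\"ahler hypothesis enters) with the glued twisted-enveloping-algebra construction $\sigma\rightsquigarrow\belU_\sigma(\bella)$ and the extraction of the extension $0\to\corO_X\to\Lambda_{(1)}\to\bella\to 0$ from a pair $(\Lambda,\Xi)$. Your explicit check that the two assignments are mutually inverse (the filtered map $f+u\mapsto f+\zeta_\alpha(u)$ killing $I_{Q_\alpha}$ and inducing an isomorphism on associated graded) is only implicit in the paper's ``summing up,'' so it is a welcome, but not divergent, addition.
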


\spazio
In particular, if $\bella$ is a holomorphic Lie algebroid and $\Sigma \in F^1H^2(\bella,\C)$, we denote the associated sheaf of filtered algebras by $\Lambda_{\bella,\Sigma}$.

This construction is functorial: if $\bella, \bella'$ are two holomorphic Lie algebroids and $\Psi:\bella\rightarrow \bella'$ is a Lie algebroid morphism, we have an induced pull back morphism between the cohomologies $\Psi^*: H^p(\bella',\C)\rightarrow H^p(\bella,\C)$ preserving the filtration.

Then it is easy to show the following:
\begin{lemma} \label{lem:functoriality}
Let $\Psi:\bella\rightarrow \bella'$ be a morphism of holomorphic Lie algebroids and $\Sigma \in F^1H^2(\bella',\C)$. Then $\Psi$ extends to a morphism of sheaves of filtered algebras
$$
\Psi: \Lambda_{\bella, \Psi^*\Sigma} \longrightarrow \Lambda_{\bella',\Sigma}.
$$
\end{lemma}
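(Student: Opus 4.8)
The plan is to carry out Sridharan's construction functorially at the level of the explicit local presentations. First I would choose an open covering $\gotU=\{U_\alpha\}$ sufficiently fine for both $\bella$ and $\bella'$, and a representative $\sigma=(Q_\alpha,\phi_{\alpha\beta})$ of $\Sigma\in F^1H^2(\bella',\C)$ over it. Since $\Psi$ is a morphism of holomorphic Lie algebroids, the dual maps $\bigwedge^k\Psi^*\colon\Omega^k_{\bella'}\to\Omega^k_\bella$ commute with $\dc_{\bella'}$, $\dc_\bella$ and with $\check{\delta}$; hence $\Psi^*\sigma=(\Psi^*Q_\alpha,\Psi^*\phi_{\alpha\beta})$ is again a closed element of $F^1T^2_\bella$ and represents the pulled-back class $\Psi^*\Sigma$. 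I use exactly this representative to build $\Lambda_{\bella,\Psi^*\Sigma}$.

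Next I produce the local algebra morphisms. The $\C_{U_\alpha}$-linear map $(f,u)\mapsto(f,\Psi(u))$ from $\corO_{U_\alpha}\oplus\bella_{|U_\alpha}$ to $\corO_{U_\alpha}\oplus\bella'_{|U_\alpha}$ induces a morphism of full tensor algebras $T^\bullet_{\C_{U_\alpha}}(\corO_{U_\alpha}\oplus\bella_{|U_\alpha})\to T^\bullet_{\C_{U_\alpha}}(\corO_{U_\alpha}\oplus\bella'_{|U_\alpha})$, and the key point is that it sends $I_{\Psi^*Q_\alpha}$ into $I_{Q_\alpha}$. The generators of the first type $f\otimes(g+u)-fg+fu$ are visibly sent to generators of the same type; for the second type one uses that $\Psi$ intertwines anchors ($a_{\bella'}\circ\Psi=a_\bella$) and brackets ($\Psi[u,v]_\bella=[\Psi u,\Psi v]_{\bella'}$), which implies that $\Psi$ carries the trivial bracket $[\cdot,\cdot]'$ on $\corO_{U_\alpha}\oplus\bella_{|U_\alpha}$ to the one on $\corO_{U_\alpha}\oplus\bella'_{|U_\alpha}$, while $(\Psi^*Q_\alpha)(u,v)=Q_\alpha(\Psi u,\Psi v)$; hence the image of a second-type generator is the corresponding generator of $I_{Q_\alpha}$. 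Passing to quotients yields algebra morphisms $\Psi_\alpha\colon\belU_{\Psi^*\sigma}(\bella)_\alpha\to\belU_\sigma(\bella')_\alpha$.

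Then I check compatibility with the gluings. Denoting by $g_{\alpha\beta}$ and $g'_{\alpha\beta}$ the transition isomorphisms of $\belU_{\Psi^*\sigma}(\bella)$ (built from $\Psi^*\phi_{\alpha\beta}$) and of $\belU_\sigma(\bella')$ (built from $\phi_{\alpha\beta}$), on a generator $f+u$ over $U_{\alpha\beta}$ both $g'_{\alpha\beta}\circ\Psi_\alpha$ and $\Psi_\beta\circ g_{\alpha\beta}$ give $f+\phi_{\alpha\beta}(\Psi u)+\Psi u$, using $(\Psi^*\phi_{\alpha\beta})(u)=\phi_{\alpha\beta}(\Psi u)$. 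So the $\Psi_\alpha$ glue to a morphism of sheaves of algebras $\Psi\colon\Lambda_{\bella,\Psi^*\Sigma}\to\Lambda_{\bella',\Sigma}$. It is filtered because the tensor-algebra map preserves the tensor grading, and by construction it is compatible with the graded isomorphisms $\Xi_{\Psi^*\sigma}$, $\Xi_\sigma$ and the induced map $\Sym^\bullet\Psi$; in particular on $\Gr_1$ it recovers $\Psi$, so it genuinely extends $\Psi$. For independence of the chosen representative: if $\sigma'-\sigma=\delta\eta$ then $\Psi^*\sigma'-\Psi^*\sigma=\delta(\Psi^*\eta)$, and the isomorphisms $\belU_{\sigma'}(\bella')\to\belU_\sigma(\bella')$ and $\belU_{\Psi^*\sigma'}(\bella)\to\belU_{\Psi^*\sigma}(\bella)$ obtained from $\eta$ and $\Psi^*\eta$ intertwine the two versions of the morphism, so the construction is canonical.

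The only non-formal point is the descent of the tensor-algebra map to the quotients, i.e. that $I_{\Psi^*Q_\alpha}$ lands in $I_{Q_\alpha}$; this is precisely where the Lie algebroid morphism axioms for $\Psi$ and the identity $(\Psi^*Q_\alpha)(u,v)=Q_\alpha(\Psi u,\Psi v)$ are used. Everything else is bookkeeping with the gluing data, so I expect no real obstacle.
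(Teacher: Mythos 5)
Your argument is correct and is exactly the intended one: the paper leaves this lemma as an easy consequence of the construction of $\belU_\sigma(\bella)$, and you spell out precisely that construction (pull back the representative $(Q_\alpha,\phi_{\alpha\beta})$, map the local tensor algebras by $(f,u)\mapsto(f,\Psi u)$, check the ideal $I_{\Psi^*Q_\alpha}$ lands in $I_{Q_\alpha}$ using the anchor/bracket compatibility, verify compatibility with the gluing maps $g_{\alpha\beta}$, and note independence of the representative). Nothing is missing.
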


\subsection{Examples: algebras associated to the canonical Lie algebroid}

Let $\bella = \belT_X$ be the holomorphic tangent bundle with the canonical Lie algebroid structure. 
Remark that in this case we have Hodge decomposition, so $F^1H^2(\belT_X,\C) = F^1H^2_{DR}(X,\C) = H^{2,0}(X) \oplus H^{1,1}(X)$. 

If $\Sigma=0$, then $\Lambda_{\belT_X,0}$ is the sheaf of algebras of holomorphic differential operators $\corD_X$.

\spazio
For $\Sigma = [(0,Q)]$ we can describe $\Lambda_{\belT_X,[(0,Q)]}$ in terms of local coordinates as follows. Let $x^1,\ldots, x^n$ be local holomorphic coordinates of $X$ and $\partial_{x^i}$ the corresponding frame of $\belT_X$. Let $Q_{ij}$ be such that $Q= \sum_{i,j}Q_{ij} \dc x^i \wedge \dc x^j$. 
Then the commutator of elements in $\Lambda_{\belT_X,[(0,Q)]}$ is determined by:
$$\begin{array}{l}
[x^i , x^j] = 0\ , \\
\left[ x^i, \partial_{x^j} \right] = \delta^i_j\  , \\
\left[\partial_{x_i} , \partial_{x_j} \right] = Q_{ij}\ .
\end{array}$$
This is the operator algebra corresponding to a magnetic monopole of charge $Q$.

\spazio
Another case that has an explicit description is when $[\phi] \in H^{1,1}(X) \cap H^2(X,\Z)$. Let $L$ be a holomorphic line bundle on $X$ given by transition functions $g_{\alpha\beta}:U_{\alpha\beta} \rightarrow \C^*$; this defines a class in $H^{1,1}(X) \cap H^2(X,\Z)$ represented, through Dolbeault isomorphism, by a cocycle $\phi_{\alpha\beta} = g_{\alpha\beta}^{-1} \dc g_{\alpha\beta}\ \in\  H^1(X,\Omega_X)$. For $Q=0$, the class $[(\phi,0)]$ defines the Atiyah Lie algebroid of $L$:
$$
0 \rightarrow \corO_X \rightarrow \belJ^{1}_X(L) \rightarrow \belT_X \rightarrow 0\ ,
$$
so we have $\Lambda_{\belT_X,[(\phi,0)]}\iso \corD (L)$, the algebra of differential operators on the Line bundle $L$.
One can then think of $\Lambda_{\belT_X,[(\phi_{\alpha\beta},Q)]}$ as the operator algebra of a twisted magnetic monopole with charge $Q$ and twisting line bundle $L$.

\subsection{Other examples}

Let now $\bella = \belK$ be a holomorphic bundle of Lie algebras. Then $\belK(U)$ is actually an $\corO_X(U)$-Lie algebra for each open $U\subseteq X$, and $H^p(H^0(U, \Omega^\bullet_\belK),\dc_\belK)$ coincides with the Chevalley-Eilenberg cohomology $H_{CE}^p(\belK(U),\corO_X(U))$.

For $\Sigma = 0$, the associated sheaf of filtered algebras $\Lambda_{\belK,0}$ is the sheaf of universal enveloping algebras of $\belK$: indeed, we have that $\Lambda_{\belK,0}(U)$ is the universal enveloping algebra of the $\corO_X(U)$-Lie algebra $\belK(U)$ for each open $U\subseteq X$. 

More generally, if $\Sigma = [(Q_\alpha,\phi_{\alpha\beta})]$, then $ \Lambda_{\belK,\Sigma} (U_\alpha)$ is the Sridharan's universal enveloping algebra of the $\corO_X(U_\alpha)$-Lie algebra $\belK(U_\alpha)$ associated to the class $Q_\alpha \in H^0(U_\alpha, \Omega_\belK^2) = H_{CE}^2(\belK(U_\alpha),\corO_X(U_\alpha))$. For any open $V\subseteq U_\alpha$, we obtain $\Lambda_{\belK,\Sigma}(V)$ in the same way using the restrictions $Q_{\alpha|V}$, so we have an explicit description of the sheaves $(\Lambda_{\belK,\Sigma})_{|U_\alpha}$. One can then check that $\phi_{\alpha\beta}$ give rise to isomorphisms on the overlaps.

\spazio
For any regular integrable holomorphic foliation $\corF \subseteq \belT_X$, we have a holomorphic Lie algebroid structure induced by the canonical one on $\belT_X$.

If $\Sigma = 0$, then $\Lambda_{\corF,0}$ is isomorphic to the algebra $\corD_\corF$ of differential operators along the foliation. For more general $\Sigma$ we obtain operator algebras of (twisted) monopoles propagating along the foliation.


\section{$\Lambda$-modules}

\subsection{Moduli spaces}

Up to now, we have worked mainly in a complex analytic setting, while from now on we will stay in the algebraic category. By the GAGA principle, we can identify the algebraic objects with the associated holomorphic ones, and we will mix the terminology, so that a holomorphic Lie algebroid over a smooth projective variety will be the same thing as an algebraic one. Actually, all the previous results can be formulated and proved in an algebraic setting, since the only transcendental result that we used is the Hodge decomposition, which is also true for smooth complex projective varieties.

\spazio

Let $\Lambda$ be a sheaf of filtered algebras over a complex smooth projective variety $X$, and $\belE$ a coherent sheaf on $X$.

\begin{definition}
A $\Lambda$-module structure on $\belE$ is an $\corO_X$-morphism $\mu: \Lambda \otimes \belE \rightarrow \belE$ satisfying the usual module axioms and such that the $\corO_X$-module structure on $\belE$ induced by $\corO_X \rightarrow \Lambda$ coincides with the original one. 
\end{definition}

We say (cf. \cite{simpson_representation_1}) that a $\Lambda$-module $(\belE,\mu)$ is \emph{(semi)stable} if $\belE$ is torsion free and for any subsheaf $\belF\subseteq \belE$ invariant under $\mu$ (i.e. such that $\mu(\Lambda\otimes \belF)\subseteq \belF$), one has $p(\belF) < p(\belE)$ (resp. $p(\belF)\leq p(\belE)$), where $p(\belE)$ is the reduced Hilbert polynomial of $\belE$, defined as the ratio $P(\belE)/rnk(\belE)$, where $P(\belE)(n) = \chi(\belE\otimes \corO_X(n))$ is the Hilbert polynomial of $\belE$, and $rnk(\belE)$ its rank.

The main result of \cite{simpson_representation_1} is the following:
\begin{theorem}    \label{thm:simpson_main}
Let $X$ be a smooth projective variety, $\Lambda$ a sheaf of filtered algebras on $X$ and $P$ a numerical polynomial.

Then there exists a quasi projective scheme $\moduli_\Lambda(P)$ that is a coarse moduli space for semistable $\Lambda$-modules with Hilbert polynomial $P$.
\end{theorem}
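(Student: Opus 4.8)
This is the main theorem of \cite{simpson_representation_1}; we recall the strategy, a Geometric Invariant Theory construction in the spirit of Gieseker, Maruyama and Le Potier. \emph{Boundedness.} The plan is to show first that the family of semistable $\Lambda$-modules with fixed Hilbert polynomial $P$ is bounded. The underlying coherent sheaf $\belE$ is torsion free with Hilbert polynomial $P$, and semistability gives an a priori bound on the reduced Hilbert polynomials of its subsheaves; combining a Grothendieck-type boundedness lemma with the Le Potier--Simpson estimate (bounding $h^0$ of a subsheaf in terms of its slope and multiplicity), one produces an integer $m_0$ such that for every semistable $\Lambda$-module $(\belE,\mu)$ with Hilbert polynomial $P$ and every $m\geq m_0$ the twist $\belE(m)$ is globally generated, has vanishing higher cohomology, and satisfies $h^0(\belE(m))=P(m)$.

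\emph{Parameter scheme.} Fix $m\geq m_0$, set $N=P(m)$, $V=\C^N$, and consider the Quot scheme $\mathrm{Quot}(V\otimes\corO_X(-m),P)$ of quotients $q\colon V\otimes\corO_X(-m)\twoheadrightarrow\belE$ with Hilbert polynomial $P$. Because $\Lambda$ is an \emph{almost polynomial} sheaf of filtered algebras it is generated over $\corO_X$ by $\Lambda_{(1)}$; hence a $\Lambda$-module structure on $\belE$ is equivalent to an $\corO_X$-linear map $\Lambda_{(1)}\otimes\belE\to\belE$ satisfying the (closed) conditions expressing the Leibniz rule, compatibility with the bracket, and the requirement that $\corO_X\subseteq\Lambda_{(1)}$ act as the identity. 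Pulling these conditions back along the universal quotient, and intersecting with the open locus where $q$ induces an isomorphism $V\iso H^0(\belE(m))$, one obtains a locally closed subscheme $R\subseteq\mathrm{Quot}(V\otimes\corO_X(-m),P)$ parametrizing such pairs $(q,\mu)$. The group $G=\mathrm{SL}(V)$ acts on $R$, and two points of $R$ give isomorphic $\Lambda$-modules precisely when they lie in the same $G$-orbit.

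\emph{GIT comparison.} For $\ell\gg m$ one constructs, à la Gieseker, a $G$-equivariant locally closed embedding of $R$ into a product of Grassmannians of quotients of $\bigwedge^{\bullet}V\otimes H^0(\corO_X(\ell-m))$, equipped with a natural ($G$-linearized) Plücker-type polarization. The crucial step is the comparison of stabilities: with respect to this linearization a point of $R$ is GIT-semistable (respectively GIT-stable) if and only if the associated $\Lambda$-module $(\belE,\mu)$ is semistable (respectively stable) in the sense defined above. The key observation making this work is that a weighted filtration of $V$ coming from a one-parameter subgroup of $G$ induces, via $q$, a filtration of $\belE$ by subsheaves which are automatically $\mu$-invariant --- again because $\Lambda$ is generated in filtration degree $\leq 1$, so that a $\Lambda$-submodule is the same as a $\Lambda_{(1)}$-stable subsheaf --- and that the Hilbert--Mumford numerical function for such a subgroup is, up to error terms controlled by the Le Potier--Simpson estimate, a positive combination of the differences $p(\belF)-p(\belE)$ over the steps $\belF$ of the filtration. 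I expect this comparison to be the main obstacle, since it requires the full strength of the boundedness bounds and a careful analysis of all relevant one-parameter subgroups in both directions.

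\emph{Conclusion.} Granting the comparison, Mumford's GIT furnishes the good quotient $\moduli_\Lambda(P)=R^{ss}/\!\!/G$, a quasi-projective scheme whose closed points are in bijection with the S-equivalence classes of semistable $\Lambda$-modules with Hilbert polynomial $P$, and on which the stable locus $R^{s}$ descends to a geometric quotient parametrizing isomorphism classes of stable objects. Finally the universal property of the GIT quotient, together with the independence of the construction on the auxiliary integer $m\gg 0$ (two choices being compared by the usual argument identifying the induced moduli functors), shows that $\moduli_\Lambda(P)$ is a coarse moduli space for semistable $\Lambda$-modules with Hilbert polynomial $P$.
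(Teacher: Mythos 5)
The paper does not prove this statement at all: it is quoted verbatim from \cite{simpson_representation_1} and used as a black box, so the only meaningful comparison is with Simpson's original argument, and your outline matches it — boundedness via the Le Potier--Simpson estimate, a Quot-scheme parameter space with the $\Lambda$-action encoded through $\Lambda_{(1)}$ (Simpson actually builds the action into the Quot scheme by taking quotients of $\Lambda_{(1)}\otimes V\otimes\corO_X(-m)$ rather than imposing $\mu$ as auxiliary data, but this is a technical variant), the identification of GIT (semi)stability with $\Lambda$-module (semi)stability, and the quotient $R^{ss}/\!\!/\mathrm{SL}(V)$. Your write-up is an outline rather than a proof, since the decisive stability comparison is explicitly ``granted,'' but for a cited external theorem this is the appropriate level of detail and contains no incorrect steps.
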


\spazio
Let $\bella$ be a holomorphic Lie algebroid, $\Sigma\in F^2H^2(\bella,\C)$ and $\Lambda = \Lambda_{\bella,\Sigma}$. 
So there exist left $\corO_X$-module splittings of the sequence
$$
0\rightarrow \corO_X \rightarrow \Lambda_{(1)} \rightarrow \bella \rightarrow 0 \ .
$$
Choose a splitting $\zeta$, which provides a representative $Q\in H^0(X,\Omega_\bella^2)_{\text{closed}}$ of $\Sigma$.

To a $\Lambda$-module structure $\mu$ on $\belE$, we canonically associate a sheaf map $\nabla: \belE \rightarrow \belE\otimes \bella\dual$, defined by
$$
\langle\nabla(e),v\rangle = \mu(\zeta(v)\otimes e)
$$ 
for all $v\in \bella$. This map satisfies the $\dc_\bella$-Leibniz rule $\nabla(fe) = f\nabla(e) + e \otimes \dc_\bella(f)$ for any $f\in\corO_X$ and $e\in \belE$, so it is a holomorphic $\bella$-connection on $\belE$.

Vice versa, if we have a holomorphic $\bella$-connection $\nabla$ on $\belE$, we can define a morphism $\mu_1: \Lambda_{(1)} \otimes \belE \rightarrow \belE$ by 
$$
\mu_1((f + \zeta(u))\otimes e) = fe + \langle\nabla(e),v\rangle.
$$ 
This morphism can be extended to a $\Lambda$-module structure if and only if 
$$
\mu_1(a \otimes \mu_1(b\otimes e)) - \mu_1(b \otimes \mu_1(a\otimes e)) = \mu_1([a,b] \otimes e)
$$ 
for any $a,b \in \Lambda_{(1)}$, $e\in\belE$. This condition is satisfied if and only if
$$\begin{array}{l}
\langle\nabla(e), [u,v]\rangle + Q(u,v)e - \langle\nabla(\langle\nabla(e),u\rangle),v\rangle + \langle\nabla(\langle\nabla(e),v\rangle),u\rangle = 0 
\end{array}$$
for any $u,v\in \bella$ and $ e\in \belE$, that is if and only if the curvature of $\nabla$ satisfies 
$$
F_\nabla = Q \id_\belE\ .
$$

So we have:
\begin{proposition}
Let $\bella$ be a holomorphic Lie algebroid, $Q\in H^0(X,\Omega^2_\bella)$ a closed $2$-$\bella$-form, and $\belE$ a coherent sheaf on $X$.

Then giving a $\Lambda_{\bella,[(0,Q)]}$-module structure $\mu$ on $\belE$ is equivalent to giving a holomorphic $\bella$-connection $\nabla: \belE\rightarrow \belE \otimes \la\dual$ such that $F_\nabla = Q \id_\belE$.
\end{proposition}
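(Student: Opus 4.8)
The plan is to read off the stated equivalence from the explicit presentation of $\Lambda_{\bella,[(0,Q)]}$ together with the computation made just above. First I would note that, $Q$ being a closed holomorphic $2$-$\bella$-form, the pair $(0,Q)$ — with vanishing \v{C}ech component over a sufficiently fine covering $\gotU$ — is a cocycle of $F^1 T_\bella^\bullet$ whose class $\Sigma = [(0,Q)]$ in fact lies in $F^2 H^2(\bella,\C)$. By Corollary \ref{cor:global_split} the associated extension $0\to\corO_X\to\Lambda_{(1)}\to\bella\to 0$ then splits globally as a sequence of left $\corO_X$-modules, and I would fix a splitting $\zeta:\bella\to\Lambda_{(1)}$ whose associated $2$-$\bella$-form is exactly $Q$. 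Since the chosen representative has trivial \v{C}ech part, the gluing maps $g_{\alpha\beta}$ in the construction of $\belU_\sigma(\bella)$ are identities, so $\Lambda:=\Lambda_{\bella,[(0,Q)]}$ is simply the quotient of the sheaf of tensor algebras $T_{\C_X}^\bullet(\corO_X\oplus\bella)$ by the two-sided ideal generated by the elements $f\otimes(g+u)-fg-fu$ and $(f+u)\otimes(g+v)-(g+v)\otimes(f+u)-[f+u,g+v]'-Q(u,v)\,\id$.

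Next I would set up the two assignments. Given a $\Lambda$-module structure $\mu$ on $\belE$, define $\nabla:\belE\to\belE\otimes\Omega_\bella$ by $\langle\nabla(e),v\rangle:=\mu(\zeta(v)\otimes e)$; the $\dc_\bella$-Leibniz rule for $\nabla$ follows at once from the module axioms, from the first family of relations above (which pins down the $\corO_X$-action on $\belE$), and from the identity $[\zeta(v),f]=a(v)(f)$ in $\Lambda_{(1)}$, so $\nabla$ is a holomorphic $\bella$-connection. Conversely, given a holomorphic $\bella$-connection $\nabla$, I would define $\mu_1:\Lambda_{(1)}\otimes\belE\to\belE$ by $\mu_1((f+\zeta(u))\otimes e)=fe+\langle\nabla(e),u\rangle$. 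Since $\Lambda$ is generated over its central subring $\C_X$ by $\Lambda_{(1)}$ and is presented by the ideal described above, $\mu_1$ extends to a (necessarily unique) left $\Lambda$-action on $\belE$ precisely when its induced map on the tensor algebra annihilates the two generating families — the first being automatic from the form of $\mu_1$, the second reducing, after applying $\mu_1$ to both sides, to $\mu_1(a\otimes\mu_1(b\otimes e))-\mu_1(b\otimes\mu_1(a\otimes e))=\mu_1([a,b]\otimes e)$ for all $a,b\in\Lambda_{(1)}$ and $e\in\belE$.

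Finally I would substitute $a=f+\zeta(u)$, $b=g+\zeta(v)$ into this last identity; using the Leibniz rule just verified, the terms involving $f$ and $g$ all cancel and one is left with
\[
\langle\nabla(e),[u,v]\rangle + Q(u,v)\,e - \langle\nabla(\langle\nabla(e),u\rangle),v\rangle + \langle\nabla(\langle\nabla(e),v\rangle),u\rangle = 0,
\]
which, by the curvature formula for an $\bella$-connection, is exactly $F_\nabla = Q\,\id_\belE$. As $\mu\mapsto\nabla$ and $\nabla\mapsto\mu$ are mutually inverse by construction, this yields the asserted bijection. The one step that is more than bookkeeping is the claim that extendability of $\mu_1$ from $\Lambda_{(1)}$ to all of $\Lambda$ is governed \emph{precisely} by the commutator relation on $\Lambda_{(1)}$; this rests squarely on the explicit tensor-algebra presentation of $\Lambda_{\bella,[(0,Q)]}$ recalled above and on the fact that $\Lambda_{(1)}$ generates $\Lambda$ over $\C_X$, so I would be careful to invoke that presentation — in particular, that the defining ideal has no further generators — cleanly. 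I do not expect any genuine difficulty beyond this.
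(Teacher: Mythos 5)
Your proposal follows the paper's own argument: choose the global left $\corO_X$-splitting $\zeta$ with associated form $Q$, pass from $\mu$ to $\nabla$ via $\langle\nabla(e),v\rangle=\mu(\zeta(v)\otimes e)$, and conversely extend $\mu_1$ from $\Lambda_{(1)}$ exactly when the commutator relation holds, which reduces to $F_\nabla=Q\,\id_\belE$. The only difference is that you spell out, via the tensor-algebra presentation of $\belU_\sigma(\bella)$ with trivial gluing, why extendability is governed precisely by that relation -- a point the paper asserts without elaboration -- so this is the same proof, slightly more detailed.
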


By virtue of this proposition and Theorem \ref{thm:simpson_main}, there exists quasi-projective moduli schemes $M_{\bella,Q}(P)$ that are coarse moduli spaces for semistable pairs $(\belE,\nabla)$, where $\belE$ is a torsion free $\corO_X$-module with Hilbert polynomial $P$, $\nabla$ is a holomorphic $\bella$-connection on $\belE$ satisfying $F_\nabla = Q\cdot \id_\belE$, and "semistable" means that for any subsheaf $\belF \subseteq \belE$ with $\nabla(\belF) \subseteq \belF \otimes \Omega_\bella$ one has $p(\belF) \leq p(\belE)$.

Now, if $Q,Q'$ are two cohomologous closed $2$-$\bella$-forms, the algebras $\Lambda_{\bella, [(0,Q)]},\Lambda_{\bella, [(0,Q')]}$ are the same, so the moduli spaces $M_{\bella,Q}(P),M_{\bella,Q'}(P)$ are naturally isomorphic. Moreover, if $Q$ is not cohomologous to $0$, the moduli spaces $M_{\bella,Q}(P)$ are empty for any polynomial $P$: on one side, by Theorem \ref{cor:vanishing_characteristic_ring}, if $\belE$ is a torsion free $\corO_X$-module and $\nabla$ is a holomorphic $\bella$-connection on it, then the cohomology class of the trace of $F_\nabla$ is zero, while on the other side $\trace(Q \cdot \id_\belE) = rnk(\belE) \cdot Q$.

Summing up, we have:
\begin{corollary} \label{cor:moduli}
Let $\bella$ be a holomorphic Lie algebroid over a smooth projective variety $X$, $Q\in H^0(X, \Omega^2_\bella)_{\text{closed}}$ and $P$ a numerical polynomial.

Then for $Q$ not cohomologous to $0$ the moduli spaces $M_{\bella,Q}(P)$ are empty, while for $Q$ cohomologous to $0$ the moduli spaces $M_{\bella,Q}(P)$ parametrize semistable flat holomorphic $\bella$-connections with Hilbert polynomial $P$.
\end{corollary}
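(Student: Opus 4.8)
The plan is to treat the two halves of the dichotomy separately, reducing each to material already established.

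For the case when $Q$ is not cohomologous to $0$ I would argue by contradiction. We may assume $P\neq 0$, the zero sheaf being the only torsion free sheaf with vanishing Hilbert polynomial; so suppose $(\belE,\nabla)$ were a point of $M_{\bella,Q}(P)$. Then $\belE$ is a non-zero torsion free $\corO_X$-module, $\nabla$ a holomorphic $\bella$-connection on it, and $F_\nabla=Q\cdot\id_\belE$. Taking traces produces the global holomorphic $2$-$\bella$-form $\trace(F_\nabla)=rnk(\belE)\cdot Q$, which is $\dc_\bella$-closed by the Bianchi identity. Since $\belE$ is torsion free, Proposition~\ref{cor:vanishing_characteristic_ring} gives $[\trace(F_\nabla)]=0$ in $H^2(\bella,\C)$, hence $rnk(\belE)\cdot[Q]=0$, and since $rnk(\belE)\geq 1$ we get $[Q]=0$. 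By the identification $H^0(X,\Omega^2_\bella)_{\mathrm{closed}}/\dc_\bella H^0(X,\Omega_\bella)\iso F^2H^2(\bella,\C)\hookrightarrow H^2(\bella,\C)$ recalled above, this forces $Q$ to be $\dc_\bella$-exact, i.e. cohomologous to $0$ — a contradiction. Therefore $M_{\bella,Q}(P)=\emptyset$.

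For the case when $Q$ is cohomologous to $0$, write $Q=\dc_\bella\psi$ with $\psi\in H^0(X,\Omega_\bella)$. By the remark preceding the statement, $\Lambda_{\bella,[(0,Q)]}$ and $\Lambda_{\bella,0}$ are the same sheaf of filtered algebras, so $M_{\bella,Q}(P)\iso M_{\bella,0}(P)=\moduli_{\Lambda_{\bella,0}}(P)$. Applying the proposition just above with $Q=0$, a $\Lambda_{\bella,0}$-module structure on a coherent sheaf $\belE$ is precisely a flat holomorphic $\bella$-connection on $\belE$, and the attendant notion of (semi)stability — $\belE$ torsion free and $p(\belF)\leq p(\belE)$ for all $\belF\subseteq\belE$ with $\nabla(\belF)\subseteq\belF\otimes\Omega_\bella$ — is the one in the statement; so Theorem~\ref{thm:simpson_main} gives that $M_{\bella,Q}(P)$ is a coarse moduli scheme for semistable flat holomorphic $\bella$-connections with Hilbert polynomial $P$. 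Concretely, the isomorphism $M_{\bella,Q}(P)\iso M_{\bella,0}(P)$ is realised by the shift $\nabla\mapsto\nabla-\psi\otimes\id_\belE$, which turns a connection of curvature $Q\cdot\id_\belE$ into a flat one, leaves $\belE$ and its invariant subsheaves untouched, and is inverted by $\nabla_0\mapsto\nabla_0+\psi\otimes\id_\belE$.

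Almost everything here is contained in the discussion preceding the corollary; the two points deserving care are (i) in the first half, identifying the class of $\trace(F_\nabla)$ supplied by Proposition~\ref{cor:vanishing_characteristic_ring} with the element of $F^2H^2(\bella,\C)$ that one compares with $[Q]$ — this is exactly where the Hodge-theoretic isomorphism $E^{2,0}_2\iso F^2H^2(\bella,\C)$ is used, and it is what makes the hypothesis ``$Q$ not cohomologous to $0$'' a genuine obstruction; and (ii) in the second half, checking that the shift $\nabla\mapsto\nabla-\psi\otimes\id_\belE$ respects flat families and the stability condition, so that the coincidence $\Lambda_{\bella,[(0,Q)]}=\Lambda_{\bella,0}$ upgrades to an isomorphism of the coarse moduli schemes and not merely a bijection of closed points. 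I expect (ii) to be the only genuine — and still minor — obstacle.
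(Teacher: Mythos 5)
Your proof is correct and follows essentially the same route as the paper: the emptiness half is exactly the paper's trace argument via Proposition~\ref{cor:vanishing_characteristic_ring} together with $\trace(Q\,\id_\belE)=rnk(\belE)\cdot Q$ (with the identification $E_2^{2,0}\iso F^2H^2(\bella,\C)$ used implicitly there), and the flat half is the paper's observation that cohomologous $Q$'s yield the same algebra $\Lambda_{\bella,[(0,Q)]}$, hence naturally isomorphic moduli spaces, your explicit shift $\nabla\mapsto\nabla-\psi\otimes\id_\belE$ being just the concrete realisation of that algebra isomorphism.
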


\spazio
For a general $\Sigma\in F^1H^2(\bella,\C)$, we do not have a global splitting of the $1$st-order sequence. But, if $(Q_\alpha,\phi_{\alpha\beta})$ is a representative of $\Sigma$, we can choose local splittings $\zeta_\alpha$ over $U_\alpha$ such that $\zeta_\beta - \zeta_\alpha = \phi_{\alpha\beta}$ on the overlaps, and repeat the previous argument to find that a $\Lambda_{\bella,[(Q_\alpha,\phi_{\alpha\beta})]}$-module structure on a coherent sheaf $\belE$ is equivalent to a bunch of holomorphic $\bella$-connections $\nabla_{\alpha}$ on $\belE_{|U_\alpha}$ such that
\begin{itemize}
\item $F_{\nabla_{\alpha}} = Q_\alpha \id_{\belE|U_\alpha}$,
\item $\nabla_\beta - \nabla_\alpha = \id_{\belE} \otimes \phi_{\alpha\beta}$ over the double intersections $U_{\alpha\beta}$.
\end{itemize}

\subsection{Examples}
If the Lie algebroid is the canonical one $(\belT_X,\id,[\cdot,\cdot])$, then $\belT_X$-connections on a sheaf $\belE$ are just usual connections. So $M_{\belT_X}(P)$ is the moduli space of semistable flat connections with Hilbert polynomial $P$, usually denoted by $M_{DR}(P)$.

\spazio
If $(\belK,0,\{,\})$ is a holomorphic bundle of Lie algebras, then a $\belK$-connection $\nabla$ on a sheaf $\belE$ is an $\corO_X$-linear map $\nabla: \belE\rightarrow \belE\otimes \belK\dual$. Indeed, since the anchor is zero, so is the restriction of $\dc_\belK$ to functions. So $\nabla$ may be seen as a section of $\End(\belE)\otimes \bella\dual$. 
Remark that, since the anchor is $0$, the bracket is $\corO_X$-bilinear, hence we can see it as a section $\Theta \in H^0(\bigwedge^2 \bella\dual \otimes \bella)$. 

The curvature of a $\belK$-connection in this case is given by $F_\nabla = \nabla\wedge\nabla + \langle\Theta, \nabla\rangle \in H^0(\End(\belE) \otimes \bigwedge^2 \belK\dual)$.

A particular case of this is when $\belK=\belT_X$ equipped with the trivial Lie algebroid bundle structure. In this case a flat $(\belT_X,0,0)$-connection on $\belE$ is an $\corO_X$-linear map $\phi: \belE \rightarrow \belE\otimes \Omega_X$ satisfying $\phi\wedge\phi =0$, i.e. it is a \emph{Higgs field} on $\belE$.

Another interesting case is when $\belK=\Omega_X$ with the trivial structure: similarly to the above, a flat $(\Omega_X,0,0)$-connection on $\belE$ is a $\corO_X$-linear map $\phi: \belE\rightarrow \belE\otimes \belT_X$ satisfying $\phi\wedge\phi =0$, that is, a \emph{co-Higgs field}, recently introduced by Hitchin in \cite{hitchin}. This is a particular case of the construction described in the next section.

\subsection{Holomorphic Poisson structures and generalized complex geometry}
Let $X$ be a smooth projective variety and $\Pi \in H^0(X, \bigwedge^2 \belT_X)$ a Poisson bivector. As is Section \ref{sec:example_1}, it defines a holomorphic Lie algebroid structure on $\Omega_X$ that we shall denote by $(\Omega_X)_\Pi$.

According to \cite{gualtieri} and \cite{xu}, this Lie algebroid defines a generalized complex structure on $X$. It can be described as follows. Recall that a generalized complex structure on a smooth manifold $M$ is defined by the ($+i$)-eigenbundle $L$ of an endomorphism $\bJ$ of $(T_M\oplus T^*_M) \otimes \C$ satisfying $\bJ^2 = -1$ and $\bJ^* = - \bJ$. Since $L$ is an isotropic subbundle of the Courant algebroid $T_M \oplus T^*_M$, the resriction of the Courant bracket of $T_M\oplus T^*_M$ to $L$ defines a real Lie algebroid structure on it. 

When $X$ is a holomorphic Poisson manifold with holomorphic Poisson bivector $\Pi$, we can define the following endomorphism of $T_X\oplus T^*_X$:
$$
\bJ_{4\Pi} = \left( \begin{array}{cc}  -J & 4 \sharp_I \\
 0 &  J^*\end{array} \right)
$$
where $J$ is the almost complex structure on $X$ and $\sharp_I$ is the morphism $T^*_X \rightarrow T_X$ associated to the bivector $\Pi_I$, where $\Pi = \Pi_R + i \Pi_I$ for $\Pi_R,\Pi_I \in \Gamma (\bigwedge^2 T_X)$. It is easy to see that $\bJ_{4\Pi}$ defines a generalized complex structure on $X$, that we shall call $L_{4\Pi}$. 

Remark that the elements of $L_{4\Pi}$ are of the form $(V + i 4\sharp_I \xi , \xi)$ with $V\in T^{0,1}_X$ and $\xi \in T^{* 1,0}_X$, which gives an isomorphism $L_{4\Pi} \iso T^{0,1}_X \oplus T^{*1,0}_X$. Moreover we have $L_{4\Pi}^* \iso T^{*0,1}_X \oplus T^{1,0}_X$, and the Lie algebroid differential on functions is $d_{L_{4\Pi}} f = \bar{\partial} f + \sharp_I(\partial f)$.

The following theorem is proved in \cite{xu}:
\begin{theorem}
The Lie algebroid $L_{4\Pi}$ is isomorphic to $(\Omega_X)_\Pi \bowtie T^{0,1}_X$.
\end{theorem}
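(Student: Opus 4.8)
The plan is to produce an explicit isomorphism $\Phi\colon L_{4\Pi}\to(\Omega_X)_\Pi\bowtie T^{0,1}_X$ of vector bundles and then recognize it as an isomorphism of Lie algebroids through the structural description of twilled sums. First I would observe that the two sides have the same underlying smooth complex vector bundle: from the description recalled just above, $L_{4\Pi}$ is $T^{0,1}_X\oplus T^{*1,0}_X$, a point being uniquely $(V+4i\,\sharp_I\xi,\ \xi)$ with $V\in T^{0,1}_X$ and $\xi\in T^{*1,0}_X$; on the other hand $(\Omega_X)_\Pi\bowtie T^{0,1}_X$ is, by definition of the twilled sum, $\big((\Omega_X)_\Pi\big)^{1,0}\oplus T^{0,1}_X$, and the smooth complex bundle underlying the holomorphic bundle $\Omega_X$ is canonically $T^{*1,0}_X$. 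So I set
$$
\Phi\big(V+4i\,\sharp_I\xi,\ \xi\big)\ =\ \xi\oplus V\ ,
$$
which carries the subbundles $\mathcal A_1:=\{(4i\,\sharp_I\xi,\xi)\}$ and $\mathcal A_2:=\{(V,0)\}$ onto $\big((\Omega_X)_\Pi\big)^{1,0}$ and $T^{0,1}_X$ respectively. The factor $4$ in $\bJ_{4\Pi}$ is exactly what makes $4i\,\sharp_I$, restricted to $(1,0)$-forms (on which $\sharp_I$ sees only the holomorphic part of $\Pi$), coincide with the anchor $\sharp$ of $(\Omega_X)_\Pi$; hence the anchor of $L_{4\Pi}$, which is the projection onto $T_X\otimes\C$, is carried by $\Phi$ to $\xi\oplus V\mapsto\sharp(\xi)+V$, the anchor of the twilled sum.

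Next I would use that the Lie algebroid bracket on $L_{4\Pi}$ is the restriction of the Courant bracket, so that $\mathcal A_1$ and $\mathcal A_2$ are sub-Lie-algebroids and $L_{4\Pi}=\mathcal A_1\oplus\mathcal A_2$. By the structural result recalled earlier — a Lie algebroid that splits as a direct sum of two sub-Lie-algebroids is canonically the twilled sum of the matched pair they form — it suffices to identify $\mathcal A_2$ with $T^{0,1}_X$ and $\mathcal A_1$ with $\big((\Omega_X)_\Pi\big)^{1,0}$ as Lie algebroids via $\Phi$; the two representations making up the matched-pair structure then correspond automatically. For $\mathcal A_2$ this is immediate, since the Courant bracket of $(V,0)$ and $(W,0)$ is $([V,W],0)$. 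For $\mathcal A_1$, since the anchors already agree, the difference between the Courant bracket along $\mathcal A_1$ and the restriction to $(1,0)$-forms of the Koszul bracket of $(\Omega_X)_\Pi$ is $\corO_X$-bilinear, hence tensorial, so it can be tested on a local frame. In holomorphic coordinates $z^1,\dots,z^n$, writing $\sharp(dz^i)$ for the (holomorphic) Hamiltonian field of $z^i$, a direct evaluation of the Courant bracket of $(\sharp(dz^i),dz^i)$ with $(\sharp(dz^j),dz^j)$ should yield an element whose form part is the Koszul bracket $\{dz^i,dz^j\}_\Pi$ and whose vector part is $[\sharp(dz^i),\sharp(dz^j)]=\sharp\{dz^i,dz^j\}_\Pi$, the last equality because $\sharp$ is a Lie algebroid morphism; thus this bracket lies in $\mathcal A_1$ and is sent by $\Phi$ to $\{dz^i,dz^j\}_\Pi$ in $\big((\Omega_X)_\Pi\big)^{1,0}$. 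Tensoriality extends this to all of $\mathcal A_1$.

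Finally I would check that the matched-pair representations coming from the splitting $L_{4\Pi}=\mathcal A_1\oplus\mathcal A_2$ are precisely the ones entering the definition of $\bella_h$: the two projections onto $\mathcal A_1$ and $\mathcal A_2$ of the Courant bracket between a $(0,1)$-vector field and a section of $\mathcal A_1$ recover, up to sign, the $\bar\partial$-operator of $\Omega_X$ (its holomorphic structure) and the operator $\mathrm{pr}^{0,1}[\sharp\xi,V]$ — exactly the two representations in the description of $(\Omega_X)_\Pi\bowtie T^{0,1}_X$. I expect the genuinely delicate step to be the computation in the second paragraph: comparing the Courant (equivalently Dorfman) bracket with the Koszul bracket while keeping careful track of the type decompositions of Lie derivatives and contractions and of the normalization absorbed into $4i\,\sharp_I$. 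Everything else is formal once the anchors are matched and the tensoriality reduction is in place.
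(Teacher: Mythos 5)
The paper itself does not prove this theorem---it is quoted from \cite{xu}---so there is no internal argument to compare with; your outline follows the same route as the cited reference (split $L_{4\Pi}$ into the two obvious summands, identify each as a sub--Lie algebroid, and invoke the structure theorem for a Lie algebroid that is a direct sum of two sub--Lie algebroids). The architecture is therefore the right one, but as written the proof has a genuine gap: the step carrying essentially all of the content is only asserted. The phrase ``the bracket on $L_{4\Pi}$ is the restriction of the Courant bracket, so $\mathcal{A}_1$ and $\mathcal{A}_2$ are sub-Lie-algebroids'' is a non sequitur --- a subbundle of a Dirac structure need not be involutive. For $\mathcal{A}_2=T^{0,1}_X$ involutivity is integrability of $J$; for $\mathcal{A}_1$ it is exactly the point where $\bar{\partial}\Pi=0$ and $[\Pi,\Pi]=0$ must be used, and your frame computation with the $dz^i$ (which, combined with the tensoriality reduction, would indeed settle both involutivity and the identification of the induced bracket with the Koszul bracket) is left at the level of ``a direct evaluation should yield''. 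The same applies to the final check that the two matched-pair representations are the $\bar{\partial}$-operator of $\Omega_X$ and $\mathrm{pr}^{0,1}[\sharp\xi,V]$: there one again needs $\bar{\partial}\Pi=0$ to see that $\mathrm{pr}^{1,0}[V,\sharp\xi]=\sharp(\iota_V\bar{\partial}\xi)$, so that the bracket of a section of $T^{0,1}_X$ with a section of $\mathcal{A}_1$ decomposes as claimed.

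There is also a concrete problem with the normalization on which both your map $\Phi$ and the tensoriality argument (which requires the two anchors to agree exactly) rest. With the conventions stated in the paper ($\sharp_I$ is contraction with $\Pi_I$, the anchor of $(\Omega_X)_\Pi$ is contraction with $\Pi$), one has $\Pi_I=(\Pi-\bar{\Pi})/2i$ and $\bar{\Pi}$ annihilates $(1,0)$-forms, so $\sharp_I\xi=\tfrac{1}{2i}\sharp\xi$ for $\xi\in T^{*1,0}_X$, hence $4i\,\sharp_I\xi=2\,\sharp\xi$, not $\sharp\xi$; moreover, solving the $+i$-eigenvalue equation for $\bJ_{4\Pi}$ directly gives a vector part proportional to $\sharp\xi$ with a constant that need not be $+1$. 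This does not threaten the theorem: rescaling the $\Omega_X$-component of $\Phi$ by the appropriate constant repairs it, because scaling a Poisson bivector by a constant rescales the Koszul bracket and the anchor compatibly, so the graph Lie algebroid is still isomorphic to $(\Omega_X)_\Pi$. But the constant has to be pinned down and the isomorphism adjusted accordingly; as it stands, $\Phi$ does not intertwine the anchors, and this bookkeeping is precisely the ``delicate normalization'' you deferred rather than carried out.
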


If $E$ is a vector bundle on $M$, we shall call a \emph{$L$-generalized holomorphic structure} on $E$ a representation of $L$ on $E$.

By Lemma \ref{lem:connections}, a $L_{4\Pi}$-generalized complex structure on a vector bundle $E$ is equivalent to a holomorphic structure $\belE$ on $E$ and a flat holomorphic $(\Omega_X)_\Pi$-connection on $\belE$. In particular, by the previous construction, we obtain moduli spaces for such objects, that is:
\begin{corollary}
Let $X$ be a smooth complex projective variety, and $\Pi\in H^0(X,\bigwedge^2 \belT_X)$ an algebraic Poisson bivector on $X$ inducing a generalized complex structure $L_{4\Pi}$ on $X$.

Then for any numerical polynomial $P$, there exists a quasi-projective scheme $M_\Pi(P)$ parametrizing semistable $L_{4\Pi}$-generalized holomorphic vector bundles with Hilbert polynomial $P$.
\end{corollary}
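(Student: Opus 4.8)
The plan is to read off the statement from three facts already established: the theorem of Xu recalled just above, Lemma~\ref{lem:connections}, and Simpson's Theorem~\ref{thm:simpson_main} applied to a suitable sheaf of filtered algebras. Write $\bella = (\Omega_X)_\Pi$ for the holomorphic Lie algebroid structure on $\Omega_X$ determined by $\Pi$. By the previous theorem its canonical complex Lie algebroid $\bella_h = \bella^{1,0}\bowtie T^{0,1}_X$ is exactly $L_{4\Pi}$, under the natural identifications $\bella^{1,0}\iso T^{*1,0}_X$ and the description of $L_{4\Pi}$ as $T^{0,1}_X \oplus T^{*1,0}_X$ given above. Hence an $L_{4\Pi}$-generalized holomorphic structure on a smooth vector bundle $E$, i.e.\ a flat $\bella_h$-connection on $E$, is by Lemma~\ref{lem:connections}(3) the same datum as a holomorphic structure $\belE$ on $E$ together with a flat holomorphic $\bella$-connection $\nabla$; morphisms of generalized holomorphic bundles correspond to morphisms of the associated pairs $(\belE,\nabla)$.

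Next I would pass from flat $\bella$-connections to $\Lambda$-modules. By Corollary~\ref{cor:moduli} (equivalently the Proposition preceding it) applied with $Q = 0$, giving a flat holomorphic $\bella$-connection on a coherent sheaf $\belE$ is equivalent to giving a $\Lambda_{\bella,0}$-module structure on $\belE$, where $\Lambda_{\bella,0}$ is the sheaf of filtered algebras attached to the class $0 \in F^1H^2(\bella,\C)$ by the construction of Section~4, and which satisfies Simpson's axioms. Under this equivalence a $\nabla$-invariant subsheaf corresponds precisely to a $\Lambda_{\bella,0}$-submodule, so Simpson's (semi)stability condition for $\Lambda_{\bella,0}$-modules coincides with (semi)stability of the corresponding generalized holomorphic bundle; moreover $\belE$ locally free corresponds to the underlying object being an honest vector bundle. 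As explained at the beginning of Section~4, by GAGA we may view $\Lambda_{\bella,0}$ as an algebraic sheaf of filtered algebras on the smooth projective variety $X$.

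Finally I would invoke Theorem~\ref{thm:simpson_main} with $\Lambda = \Lambda_{\bella,0}$ and the given numerical polynomial $P$, obtaining a quasi-projective coarse moduli scheme $\moduli_{\Lambda_{\bella,0}}(P)$ for semistable $\Lambda_{\bella,0}$-modules with Hilbert polynomial $P$. Setting $M_\Pi(P) := \moduli_{\Lambda_{\bella,0}}(P)$ and unwinding the two equivalences above, the closed points of $M_\Pi(P)$ are in bijection with isomorphism classes of semistable flat holomorphic $\bella$-connections, hence with semistable $L_{4\Pi}$-generalized holomorphic bundles, of Hilbert polynomial $P$; the locally free ones, which carry genuine vector bundle structures, form an open subscheme.

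The only genuinely delicate part is the bookkeeping in this chain of identifications: one must check that $\mu$-invariant subsheaves in Simpson's sense match sub-objects in the generalized-complex sense so that the two semistability notions agree verbatim, and one must reconcile the torsion-free sheaves that appear in Simpson's construction with the locally free bundles in the statement. The latter is controlled by Lemma~\ref{lem:connections} together with the fact that a coherent sheaf equipped with a flat holomorphic $\bella$-connection is locally free along the leaves of the Poisson foliation (so in particular on the dense open locus where $\Pi$ is nondegenerate), which is why the statement is phrased in terms of generalized holomorphic vector bundles while $M_\Pi(P)$ itself is the full Simpson moduli scheme.
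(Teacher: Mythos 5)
Your proposal follows essentially the same route as the paper: identify $L_{4\Pi}$ with the canonical complex Lie algebroid of $\bella=(\Omega_X)_\Pi$ via the theorem quoted from Laurent-Gengoux--Sti\'enon--Xu, translate generalized holomorphic structures into pairs (holomorphic structure, flat holomorphic $\bella$-connection) via Lemma \ref{lem:connections}, and then invoke the moduli construction of Corollary \ref{cor:moduli} (i.e.\ $\Lambda_{\bella,0}$-modules and Theorem \ref{thm:simpson_main}). Your additional remarks on matching the two semistability notions and on torsion-free versus locally free objects address bookkeeping the paper leaves implicit and do not alter the argument (though the aside about the locus where $\Pi$ is nondegenerate being dense is not needed and not true for arbitrary $\Pi$).
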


\end{document}